\newcommand{\imono}[1]{ \;\xymatrix{  \ar@{>->}^{#1}[r] &  \\} }
\newcommand{\iepi}[1]{ \;\xymatrix{  \ar@{->>}^{#1}[r] &  \\} }
\newcommand{\mono}{ \;\xymatrix{  \ar@{>->}[r] &  \\} }
\newcommand{\epi}{ \xymatrix{   \ar@{->>}[r] &  \\} }
\def\rep{\mbox{rep}\,}
\def\add{\mbox{add}}
\newcommand{\bR}{\mathbb{R}}
\newcommand{\A}{\mathcal{A}}
\newcommand{\B}{\mathcal{B}}
\newcommand{\E}{\mathcal{E}}
\newcommand{\cF}{\mathcal{F}}
\renewcommand{\P}{\mathcal{P}}
\newcommand{\T}{\mathcal{T}}
\newcommand{\Hom}{Hom}
\newcommand{\bsm}{\left[ \begin{smallmatrix}}
\newcommand{\esm}{\end{smallmatrix} \right]}
\theoremstyle{definition} 
\newtheorem{theorem}{Theorem}[section]
\newtheorem{proposition}[theorem]{Proposition}
\newtheorem{lemma}[theorem]{Lemma}
\newtheorem{cor}[theorem]{Corollary}
\newtheorem{remark}[theorem]{Remark}
\newtheorem{example}[theorem]{Example}
\newtheorem{definition}[theorem]{Definition}
\theoremstyle{definition}
\newcommand{\cS}{\mathcal{S}}
\newcommand{\mN}{\mathbb{N}}
\def\End{\mbox{End}}
\def\Hom{\mbox{Hom}}
\def\Ext{\mbox{Ext}}
\def\Ban{\mbox{\bf Ban}}
\def\rep{\mbox{rep}\,}
\def\mod{\mbox{mod}\,}
\def\Im{\mbox{Im}\, }
\def\Coim{\mbox{Coim}\,}
\def\Ker{\mbox{Ker}\,}
\def\Coker{\mbox{Coker}\,}
\def\radical{\mbox{rad}_{\E}}
\def\inter{\mbox{Int}}
\def\Max{\mbox{Max}}
\def\Min{\mbox{Min}}
\def\coker{\mbox{coker}\,}
\def\Sup{\mbox{Sup}}
\def\Sub{\mbox{Sub}}
\def\Sum{\mbox{Sum}}
\title{Intersections, sums, and the Jordan-H\"older property for exact categories}
\author{Thomas Br\"ustle, Souheila Hassoun and Aran Tattar}
\begin{document}
\date{}
\maketitle

\bigskip
\abstract{We investigate how the concepts of intersection and sums of subobjects carry to exact categories. We obtain a new characterisation of quasi-abelian categories in terms of admitting admissible intersections in the sense of \cite{HR}. There are also many alternative characterisations of abelian categories as those that additionally admit admissible sums and in terms of properties of admissible morphisms. We then define  a generalised notion of intersection and sum which every exact category admits. Using these new notions, we define and study classes of exact categories that satisfy the Jordan-H\"older property for exact categories, namely the Diamond exact categories and Artin-Wedderburn exact categories. By explicitly describing all exact structures on $\mathcal{A}= \mbox{rep}\, \Lambda$ for a Nakayama algebra $\Lambda$ we characterise all Artin-Wedderburn exact structures on $\mathcal{A}$ and show that these are precisely the exact structures with the Jordan-H\"{o}lder property. 
}

%\tableofcontents
%%%%%%%%%%%%%%%%%%%%%%%%%%%%%%%%%%%%%%

\section{Introduction}
In a classical theorem in group theory, Camille Jordan stated in 1869 that any two composition series of the same finite group have the same number of quotients. Later, in 1889, Otto H\"older reinforced this result by proving the theorem known as the Jordan-H\"older-Schreier theorem, which states that any two composition series of a given group are equivalent, that is, they have the same  length and the same factors, up to permutation and isomorphism. 
This  theorem has been generalised to many other contexts, such as operator groups, modules over rings or general abelian categories. Most proofs use the concept of intersection and sum, which is readily available for groups, modules or objects in an abelian category.

In a general categorical setup, the intersection is defined as pullback of two monomorphisms, if it exists. However, in order to define a sensible cohomology theory, one needs to restrict the notion of subobjects to {\em admissible monomorphisms}, which allow to form kernel-cokernel pairs. In the context of functional analysis, for instance, this leads to the study of closed subspaces, giving rise to the structure of a quasi-abelian category. More generally, the setup is that of Quillen exact categories \cite{Qu} which generalises abelian categories. In this generality, one requires not only that the intersection of admissible subobjects exists, but it needs to be an admissible subject itself. Central object of study in this paper is therefore the notion of admissible intersections and sums in an exact category.
\bigskip

The notion of exact categories has been recently the center of many works, see e.g.
\cite{J,E16,E17,E18,E19,E20}. 
The exact structure for Delta-filtered modules has been studied in \cite{BrHi}, and more recently in \cite{KKO}. They satisfy the Jordan-H\"older property, which is also shown in \cite{Sa} in the context of  stratifying systems in exact categories.
 Given an exact category, \cite{BeGr} and \cite{FG} study  its associated Hall algebra, and \cite{VW} the graded Lie algebra. 
 Unicity of filtrations for exact categories is also studied in \cite{Ch} related to the Harder-Narasimhan property for exact categories. And the Jordan-H\"older property in the context of semilattices is studied in  \cite{P}.

Choosing a Quillen exact structure allows to define various cohomology theories for locally compact abelian groups, Banach spaces, or other categories studied in functional analysis. Other areas where exact structures appear naturally are Happel's construction of triangulated categories from Frobenius categories, or extension-closed subcategories of abelian categories. 
The set of exact structures on a fixed additive category forms a lattice $(Ex(\A), \subset)$ as shown in \cite{BHLR}. This lattice is studied recently by the first two authors in \cite{BBH}, and also by Fang and Gorsky in \cite{FG}.
Note also that the exact structures are classified by Enomoto in \cite{E16} using Wakamatsu tilting, and in \cite{E17} where he gives a classification of all exact structures on a given idempotent complete additive category.\\

We give now a more detailed description of the main concepts and results in this paper.
\begin{definition}[Definition \ref{composition series}]
Let $(\A, \E)$ be an exact category. A finite $\mathcal{E}-$composition series for an object $X$ of $\mathcal{A}$ is a sequence 
\[ \begin{tikzcd}[sep=large] 0= X_{0} \arrow[r, tail, "i_0"] &  X_1 \arrow[r, tail, "i_1"] & \dots \arrow[r, tail, "i_{n-2}"] &  X_{n-1} \arrow[r, tail, "i_{n-1}"] & X_n = X  \end{tikzcd}\] 
where all $i_l$ are \emph{proper admissible monics} with $\E-$simple cokernel. 
We say an exact category $(\A,\E)$ has the {\em ($\E-$)Jordan-H\"older property} or is a {\em Jordan-H\"older exact category}
if any two finite $\E$-composition series of $X$ are equivalent, that is, they have the same length and the same composition factors, up to permutation and isomorphism. 
\end{definition} 

This is an interesting problem since the Jordan-H\"older property does not hold in general for any exact category, see \cite[Example 6.9]{BHLR}, \cite{E19} and Examples \ref{even dimensional spaces} and \ref{Counterexample1} for counter-examples. This problem is also studied by Enomoto in \cite{E19},  using the Grothendieck monoid which is a lesser-known invariant of exact categories defined by the same universal property as the Grothendieck group. He shows that the relative Jordan-H\"older property holds if and only if the Grothendieck monoid of the exact category is free. Note that the same author considered the Grothendieck group for exact categories in \cite{E18}.
In this work we fix  an additive category $\A$ and study for which exact structures $\E\in Ex(\A)$ the relative $\E-$Jordan-H\"older property holds.

\bigskip
We also establish a generalisation of the Fourth Isomorphism Theorem for modules, which will be a useful tool throughout our work.
\begin{proposition} [Proposition \ref{4 iso}]({\bf{The fourth $\E$-isomorphism theorem}}) 
Let $(\mathcal{A}, \E)$ be an exact category and let
\[ \begin{tikzcd} X' \arrow[r, tail] & X \arrow[r, two heads] & X / X' \end{tikzcd} \] be a short exact sequence in $\E$. Then there is an isomorphism of posets 
\begin{align*}
    \{M \in \mathcal{A} \mid X' \rightarrowtail M \rightarrowtail X \} & \longleftrightarrow \{ N \in \mathcal{A} \mid N \rightarrowtail X / X' \}   \\ M & \longmapsto M / X'.
\end{align*} 
\end{proposition}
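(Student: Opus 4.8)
The plan is to exhibit an explicit inverse and check everything is order-preserving. Write $j\colon X' \rightarrowtail X$ and $p\colon X \twoheadrightarrow X/X'$ for the two maps of the given short exact sequence. I read the left-hand poset as the admissible subobjects $b\colon M \rightarrowtail X$ through which $j$ factors, say $j = ba$ with $a\colon X' \rightarrowtail M$, ordered by the subobject order on $X$; the right-hand poset is the admissible subobjects of $X/X'$, ordered by the subobject order on $X/X'$. The candidate maps are $\Phi\colon M \mapsto M/X'$ and its inverse $\Psi$ built from pullbacks.

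First I would show $\Phi$ is well defined with values in the right-hand poset and is order-preserving. This is the standard fact (a form of Noether's isomorphism theorem, valid in any exact category) that admissible monics $X' \rightarrowtail M \rightarrowtail X$ induce an admissible monic $\bar m\colon M/X' \rightarrowtail X/X'$ with cokernel $X/M$, sitting in a commutative diagram all of whose rows and columns are short exact sequences. So $\Phi(M) = M/X'$ is an admissible subobject of $X/X'$. If $M_1 \le M_2$, applying the same to $X' \rightarrowtail M_1 \rightarrowtail M_2$ (and then composing the monic $M_1/X' \rightarrowtail M_2/X'$ with $\bar m$ for $M_2$) shows $M_1/X' \le M_2/X'$, i.e.\ $\Phi$ preserves order.

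Next, the inverse $\Psi$. Given an admissible monic $n\colon N \rightarrowtail X/X'$, let $M$ be the pullback of $p$ and $n$, which exists because $p$ is an admissible epic; the pulled-back map $M \twoheadrightarrow N$ is again an admissible epic, and taking kernels along the pullback square its kernel is $\Ker p = X'$, giving a short exact sequence $X' \rightarrowtail M \twoheadrightarrow N$; in particular $X' \rightarrowtail M$ is an admissible monic. The delicate point is the other leg $m\colon M \to X$: one cannot simply invoke stability of admissible monics under pullback, so instead I would verify by hand, from the universal property of the pullback, that $m$ is (isomorphic to) the kernel of the admissible epic $qp\colon X \twoheadrightarrow X/X' \twoheadrightarrow (X/X')/N$, where $q = \coker n$; since a composite of admissible epics is admissible, $m$ is then an admissible monic with cokernel $(X/X')/N$, and $M$ lies in the left-hand poset. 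Order-preservation of $\Psi$ follows since for $N_1 \le N_2$ the third isomorphism theorem presents $(X/X')/N_2$ as a quotient of $(X/X')/N_1$, whence $\Ker\bigl(X \twoheadrightarrow (X/X')/N_1\bigr)$ is an admissible subobject of $\Ker\bigl(X \twoheadrightarrow (X/X')/N_2\bigr)$.

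Finally I would check $\Phi$ and $\Psi$ are mutually inverse. For $\Phi\Psi = \mathrm{id}$: if $M = \Psi(N)$, the short exact sequence $X' \rightarrowtail M \twoheadrightarrow N$ identifies $M/X'$ with $N$, and tracing the Noether diagram shows the identification is compatible with the monics into $X/X'$ (the monic $M/X' \rightarrowtail X/X'$ has cokernel $X/M = (X/X')/N$, so it is $\Ker q = n$). For $\Psi\Phi = \mathrm{id}$: if $N = M/X'$ with monic $\bar m$ and cokernel $X/M$, commutativity of the Noether diagram gives that $X \xrightarrow{p} X/X' \to X/M$ is the canonical $X \twoheadrightarrow X/M$, whose kernel is $M$, so $\Psi(\Phi(M)) = M$ compatibly with the monics into $X$. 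Being mutually inverse order-preserving bijections, $\Phi$ and $\Psi$ form an isomorphism of posets. The main obstacle is the one flagged above: proving the pullback $M$ embeds in $X$ as an \emph{admissible} subobject with the right cokernel; once that identification with a kernel of a composite admissible epic is in place, the remaining verifications and the poset bookkeeping are routine.
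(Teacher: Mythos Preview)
Your proof is correct and follows essentially the same architecture as the paper's: define the inverse via pullback along $p$, then verify the two maps are mutually inverse order-preserving bijections. The only noteworthy difference is technical: where the paper cites B\"uhler's Proposition~2.15 to get that the pullback leg $m\colon M\to X$ is an admissible monic, and later uses the Five Lemma to identify $\Psi\Phi(M)\cong M$, you instead identify $m$ directly as the kernel of the composite admissible epic $qp$ and run the mutual-inverse check via this kernel description---a self-contained variant that avoids those citations but is otherwise the same argument.
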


In \cite{Bau}, Baumslag gives a short proof of the Jordan-H\"{o}lder theorem for groups, by intersecting the terms of one subnormal series with those in another series. Motivated by these ideas, we generalise the abelian notions of intersection and sum to exact categories. We do this in two ways. Firstly, in Section 4, by considering intersections as pullbacks and sums as pushouts of intersections - as is the case in the abelian setting,  see \cite[Section 5]{GR} and \cite[Definition 2.6]{pop} -  we recall \emph{AI-categories} (Admissible Intersection) and \emph{AIS-categories} (Admissible Intersection and Sum) from \cite{HR}. The AI-categories are pre-abelian exact categories where admissible monics are stable under pullback along admissible monics and all such pullbacks exist (see Definition \ref{quasi-nice})
\begin{equation} \label{introdiagram}
   \begin{tikzcd}
{A} \arrow[r, tail, "i"] \arrow[d, "f"', tail] & {B} \arrow[d, "g", tail] \\
{C} \arrow[r, tail, "j"']           & {D.} \arrow[ul, phantom, "\lrcorner" very near start]
\end{tikzcd} 
\end{equation}

In a previous version of \cite{HR} the term \emph{quasi-nice} exact categories was also used. We prove in Theorem \ref{quasi-nice is qa} that the AI-categories are necessarily quasi-abelian with the maximal exact structure $\E_{max}$ in the lattice $(Ex(\A), \subseteq)$. It has been proved recently by the second author, Shah and Wegner, in \cite[Theorem 6.1]{HSW}, that the converse is also true. Hence, we have a new characterisation of quasi-abelian categories:
\begin{theorem}[Theorem \ref{HSW 6.1}]{\bf{(Br\"ustle, Hassoun, Shah, Tattar, Wegner)}}
A category $(\mathcal{A}, \E_{max})$ is quasi-abelian if and only if it is an AI-category.
\end{theorem}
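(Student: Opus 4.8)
The plan is to split the equivalence into its two implications and to notice that one of them is already in hand. If $(\mathcal{A},\E_{max})$ is an AI-category, then Theorem \ref{quasi-nice is qa} already tells us that $\mathcal{A}$ is quasi-abelian (indeed, with maximal exact structure $\E_{max}$), so that direction costs nothing. Everything therefore rests on the converse: a quasi-abelian category equipped with its maximal exact structure is an AI-category. This is \cite[Theorem 6.1]{HSW}, and the argument I would give runs as follows.

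Assume $\mathcal{A}$ is quasi-abelian. First I would dispose of the existence clause in Definition \ref{quasi-nice}: a quasi-abelian category is pre-abelian, hence has all finite limits, so in particular every pullback along an admissible monic exists. Next I would invoke the basic structure theory of quasi-abelian categories, to the effect that on such a category $\E_{max}$ is exactly the class of short strict exact sequences; consequently the admissible monics for $\E_{max}$ are precisely the strict monomorphisms, i.e.\ the kernels. With this translation in place, the only thing left to prove is that the pullback of a kernel along a kernel is again a kernel.

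For that I would argue directly on a pullback square as in \eqref{introdiagram} with $g$ and $j$ admissible monics. Since $j$ is monic, so is its pullback $f$. Writing $q := \coker(g)$, strictness of $g$ gives $g = \ker q$. Then $qjf = qgi = 0$, so $f$ factors through $\ker(qj)$; conversely any $h\colon T\to C$ with $qjh = 0$ satisfies $jh = gk$ for a unique $k\colon T\to B$ because $g = \ker q$, and the universal property of the pullback square produces a factorisation of $h$ through $f$, which is unique since $f$ is monic. Hence $f = \ker(qj)$ is a kernel, that is, an admissible monic; a symmetric computation with $p := \coker(j)$ shows the other leg satisfies $i = \ker(pg)$. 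This verifies stability of admissible monics under pullback along admissible monics, completing the proof that $(\mathcal{A},\E_{max})$ is an AI-category.

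The hard part is not any single step but the identification of $\E_{max}$ on a quasi-abelian category with the class of strict short exact sequences --- equivalently, the fact that a pre-abelian category is quasi-abelian precisely when its kernel--cokernel pairs already form an exact structure. Once that is granted, the pullback-stability collapses to the purely formal kernel computation above and the existence of pullbacks is automatic, so the whole weight of the theorem lies in bridging the diagrammatic AI-axioms with the classical quasi-abelian ones.
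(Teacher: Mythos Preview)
Your proof is correct and follows essentially the same route as the paper: both directions are handled identically, the only difference being that where the paper cites the dual of \cite[Prop.~5.2]{Kelly} for ``pullback of a kernel along a kernel is a kernel'', you supply a direct verification that $f=\ker(qj)$ (and symmetrically $i=\ker(pg)$). One small slip: in the diagram \eqref{introdiagram} it is $g$, not $j$, whose pullback is $f$; your conclusion that $f$ is monic is unaffected since $g$ is monic too.
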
 

The AIS-categories are the AI-categories that satisfy the additional property that the unique induced morphism $u$ in the pushout coming from the pullback diagram (\ref{introdiagram}) is an admissible monic (see Definition \ref{AIS}): 
\[
\begin{tikzcd}
{A} \arrow[r, tail, "i"] \arrow[d, "f"', tail] & {B} \arrow[d, "l", tail]  \arrow[ddr, "g", tail, bend left]& \\
{C} \arrow[r, tail, "k"']   \arrow[drr, tail, "j"', bend right]        & {E} \arrow[ul, phantom, "\ulcorner" near end]  \arrow[dr, tail, "u"] \\ & & D       
\end{tikzcd}
\]
It turns out that the AIS-categories are precisely the abelian categories endowed with the maximal exact structure. This, along with our study of the behaviour of admissible morphisms under composition and sum in Section \ref{admissibles and abelian}, allows us to give these following alternative characterisations of abelian categories:
\begin{theorem}[Theorems \ref{abelian 1} \& \ref{abelian 2}]
Let $(\A, \E)$ be an exact category. Then the following are equivalent:
 \begin{enumerate}
     \item[a)]
     $\A$ is an abelian category and $\E=\E_{max}$,
     \item[b)] $(\A, \E)$ is an AIS-category,
     \item[c)] $\Hom(\A)=\Hom^{ad}(\A)$,
     \item[d)] $\Hom^{ad}(\A)$ is closed under composition,
     \item[e)] $\Hom^{ad}(\A)$ is closed under addition,
 \end{enumerate} where $\Hom^{ad}(\A)$ denotes the addmissible morphisms in $\A$ (see Definition \ref{ad mor}).
\end{theorem}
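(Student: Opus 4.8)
The plan is to prove the two two-cycles (c)$\Leftrightarrow$(d) and (c)$\Leftrightarrow$(e), the equivalence (a)$\Leftrightarrow$(c), and to invoke the identification of AIS-categories with abelian categories carrying their maximal exact structure (recorded in Section~4) for (a)$\Leftrightarrow$(b); since everything links through (a) and (c), this suffices. The whole argument is powered by one elementary observation: in any additive category an arbitrary morphism $f\colon X\to Y$ factors as
\[ X \xrightarrow{\;\binom{1_X}{f}\;} X\oplus Y \xrightarrow{\;(0\;\;1_Y)\;} Y, \]
where $\binom{1_X}{f}$ is a split monomorphism (a section of the projection onto $X$) and $(0\;\;1_Y)$ is a split epimorphism. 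In any exact category split monics are admissible monics and split epics are admissible epics, so both of these arrows lie in $\Hom^{ad}(\A)$.

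The implications (c)$\Rightarrow$(d) and (c)$\Rightarrow$(e) are vacuous. For (d)$\Rightarrow$(c) the displayed factorisation exhibits every $f$ as a composite of two admissible morphisms, so closure of $\Hom^{ad}(\A)$ under composition yields $f\in\Hom^{ad}(\A)$. For (e)$\Rightarrow$(c): note first that $\Hom^{ad}(\A)$ is automatically closed under negation — negate the admissible-monic factor of an analysis — so closure under addition entails closure under subtraction. Hence $\binom{1_X}{f}-\binom{1_X}{0}=\binom{0}{f}=j_Yf$ is admissible, where $j_Y=\binom{0}{1_Y}\colon Y\rightarrowtail X\oplus Y$, and dually $f\pi_X$ is admissible for the split epic $\pi_X\colon X\oplus Y\twoheadrightarrow X$. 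Reading $\ker f=\ker(j_Yf)$ and $\coker f=\coker(f\pi_X)$ off the analyses of $j_Yf$ and $f\pi_X$ shows $\A$ is pre-abelian. Finally, for an analysis $j_Yf=\iota\pi$ one has $\pi_X\iota=0$ (since $\pi_Xj_Y=0$ and $\pi$ is epic), so $\iota$ factors through $\ker\pi_X=j_Y$, say $\iota=j_Y\mu$ with $\mu$ monic; as $j_Y$ and $j_Y\mu=\iota$ are admissible monics and $\mu$ admits a cokernel, the cancellation law for admissible monics — a form of Quillen's obscure axiom, which is a theorem for exact categories — makes $\mu$ an admissible monic, and then $f=(0\;\;1_Y)\,j_Yf=\mu\pi$ is an analysis, so $f\in\Hom^{ad}(\A)$.

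It remains to treat (a)$\Leftrightarrow$(c) and (a)$\Leftrightarrow$(b). For (a)$\Rightarrow$(c): in an abelian category every morphism factors as an epimorphism followed by a monomorphism, and for $\E_{max}$ every epimorphism is a deflation and every monomorphism an inflation, so every morphism is admissible. For (c)$\Rightarrow$(a): (c) makes $\A$ pre-abelian (kernels and cokernels are read off analyses); for any $f$ with analysis $f=\iota\pi$ one has $\pi=\coker(\ker\pi)=\coker(\ker f)$ and $\iota=\ker(\coker\iota)=\ker(\coker f)$, identifying $\pi$ with $X\twoheadrightarrow\Coim f$ and $\iota$ with $\Im f\rightarrowtail Y$, so the canonical map $\Coim f\to\Im f$ is invertible and $\A$ is abelian; moreover an admissible morphism that is monic has an invertible admissible-epic part, so every monomorphism of $\A$ is an admissible monic, whence every short exact sequence of $\A$ is a conflation and $\E=\E_{max}$. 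For (a)$\Leftrightarrow$(b) one uses, in one direction, Theorem~\ref{quasi-nice is qa} (an AI-category, hence an AIS-category, is quasi-abelian with $\E_{max}$) together with the fact that admissibility of the induced morphism $u$ upgrades ``quasi-abelian'' to ``abelian'' by forcing every comparison $\Coim\to\Im$ to be an isomorphism; in the other direction, in an abelian category with $\E_{max}$ the pushout $B\sqcup_{B\cap C}C$ of an intersection square embeds as the subobject $B+C$ of $D$, so diagram~(\ref{introdiagram}) has the AIS property.

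The step I expect to be the main obstacle is (e)$\Rightarrow$(c): one must secure pre-abelianity of $\A$ before speaking of $\Coim$ and $\Im$ at all, and then invoke precisely the right form of the cancellation law for admissible monomorphisms — the obscure axiom — whose application here rests on having already produced the cokernel of $\mu$. The analogous delicate point, which I would import from Section~4 rather than reprove, is the passage from quasi-abelian to abelian inside (b)$\Rightarrow$(a), where the full force of the AIS-hypothesis on $u$ is used.
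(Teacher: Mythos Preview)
Your proof is correct and follows the same global architecture as the paper: (d)$\Rightarrow$(c) via the factorisation $f=(0\;1)\binom{1}{f}$, (c)$\Rightarrow$(a) by reading off pre-abelianity and the coimage--image isomorphism from analyses (the paper cites this as Freyd's lemma rather than reproving it), and (a)$\Leftrightarrow$(b) via the results of Section~4.

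The one genuine tactical difference is in (e)$\Rightarrow$(c). The paper, like you, writes $\binom{0}{f}$ as a sum of sections to get an analysis $\binom{0}{f}=\binom{0}{h}g$; it then shows $h$ is an admissible monic by a direct pullback computation --- pulling the conflation $Z\rightarrowtail X\oplus Y\twoheadrightarrow X\oplus\Coker h$ back along the inclusion $\Coker h\hookrightarrow X\oplus\Coker h$ and identifying the result with $Z\xrightarrow{h}Y\twoheadrightarrow\Coker h$. You instead take a detour: first use the dual trick with $f\pi_X$ to establish pre-abelianity, then factor $\iota=j_Y\mu$ through $\ker\pi_X$, and finally invoke the obscure axiom (Proposition~\ref{obscure axiom}) to conclude that $\mu$ is an admissible monic. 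Your route is slightly longer but conceptually cleaner, since the obscure axiom packages exactly the cancellation you need and the pre-abelianity you establish along the way is what legitimises its hypothesis; the paper's pullback argument is more self-contained but tacitly uses that $\Coker h$ exists, which ultimately rests on the same observation.
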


As we observe in  Examples \ref{even dimensional spaces} and \ref{example A2 again}, the pullback and pushout notions of unique intersection and sum do not necessarily apply for general exact categories- even if the exact category has the Jordan H\"older property. This leads us to define, in Definition \ref{general intersection and sum}, a general notion of admissible intersection and sum that works for all exact categories. For two admissible subobjects $(A, f)$ and $(B, g)$ of $X$, their intersection, $\inter_{X}(A, B)$, is the set of all their maximal common proper admissible subobjects. Dually, their sum,  $\Sum_{X}(A, B)$ is the set of all their minimal common proper admissible superobjects that are subobjects of $X$. Using this, we study exact categories satisfying the Diamond axiom.
\begin{definition}[Definition \ref{Diamond}]({\bf{Diamond Axiom}}) 
Let $X \in \A$ and let $(A,f)$ and $(B,g)$ be two distinct maximal $\E-$subobjects of $X$, that is, the cokernels $X/A$ and $X/B$ are $\E-$simple. 
We say that $(A,f)$ and $(B,g)$ satisfy the {\em diamond axiom} if for every $Y \in \inter_X(A,B)$ we have that $A/Y$ and $B/Y$ are both $\E-$simple, and are isomorphic to the $\E-$simple cokernels of $X$
and the elements of the sets
 $\{X/A, A/Y \} \{ X/B, B/Y \}$ are equal up to permuation and isomorphism

\[ \begin{tikzcd} & A \arrow[dr, tail, "f"] & \\  Y \arrow[dr, tail ] \arrow[ur, tail] &  & X. \\ & B \arrow[ur, tail, "g"'] &   \end{tikzcd}
\]
\end{definition} 
These categories generalise the abelian categories as we note in Remark \ref{ab is diamond}, and satisfy the relative Jordan-H\"older property:
\begin{theorem}[Theorem \ref{JH}] Every diamond exact category is a Jordan-H\"older exact category. 
\end{theorem}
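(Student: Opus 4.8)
The plan is to induct on the length $n$ of a finite $\E$-composition series $S\colon 0 = X_0 \rightarrowtail X_1 \rightarrowtail \cdots \rightarrowtail X_n = X$ of an object $X$ and to show that an arbitrary second finite $\E$-composition series $T\colon 0 = Y_0 \rightarrowtail \cdots \rightarrowtail Y_m = X$ is equivalent to it. This is the exact-category adaptation of Baumslag's argument for groups: rather than intersecting the two series term by term, one only ``intersects'' the two maximal $\E$-subobjects $M:=X_{n-1}$ and $N:=Y_{m-1}$ and feeds the outcome back into the inductive hypothesis via the diamond axiom. The cases $n=0$ (forcing $X=0$, hence $m=0$) and $n=1$ (forcing $X$ to be $\E$-simple, hence $m=1$ with the single composition factor $X$) follow at once from the definition of $\E$-simple.

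Before the main induction I would isolate, and prove by its own induction on length, the auxiliary claim: \emph{if $Y$ has an $\E$-composition series of length $k$, then every maximal $\E$-subobject of $Y$ has an $\E$-composition series, of length $k-1$}. Given $Y$ with a chosen series of length $k$, its penultimate term $Y'$ is one such maximal $\E$-subobject, carrying a series of length $k-1$. For any maximal $\E$-subobject $P$ with $P\neq Y'$, one picks $Z\in\inter_Y(P,Y')$; by the diamond axiom $P/Z$ and $Y'/Z$ are both $\E$-simple, so $Z$ is a maximal $\E$-subobject of $Y'$, which has a shorter composition series, so by induction $Z$ admits one, and prolonging it by the proper admissible monic $Z\rightarrowtail P$ (whose cokernel $P/Z$ is $\E$-simple) yields one for $P$. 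Making this precise requires that $\inter_Y(P,Y')$ be non-empty; the family of common proper admissible subobjects of $P$ and $Y'$ always contains $0$, and that it has a maximal element rests on a finiteness property of the poset of admissible subobjects of an object with a finite composition series, which I would derive from the fourth $\E$-isomorphism theorem (Proposition \ref{4 iso}).

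For the inductive step, compare $M=X_{n-1}$ and $N=Y_{m-1}$, two maximal $\E$-subobjects of $X$. If $M=N$, the truncations $0 = X_0 \rightarrowtail \cdots \rightarrowtail X_{n-1}$ and $0 = Y_0 \rightarrowtail \cdots \rightarrowtail Y_{m-1}$ are two composition series of the same object, which has a series of length $n-1<n$; the inductive hypothesis gives $m-1=n-1$ and an equality of factor multisets, and appending the common cokernel $X/M=X/N$ shows $S$ and $T$ are equivalent. If $M\neq N$, pick $Z\in\inter_X(M,N)$; the diamond axiom makes $M/Z$ and $N/Z$ $\E$-simple and makes the multisets $\{X/M,M/Z\}$ and $\{X/N,N/Z\}$ agree up to permutation and isomorphism. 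By the auxiliary claim applied to $M$, the maximal $\E$-subobject $Z$ of $M$ has a composition series $R$; prolonging $R$ by $Z\rightarrowtail M$, respectively by $Z\rightarrowtail N$, produces composition series of $M$ and of $N$ of equal length. Since $M$ has a series of length $n-1<n$, the inductive hypothesis applies to $M$ and identifies the factor multiset of $0 = X_0 \rightarrowtail \cdots \rightarrowtail X_{n-1}$ with that of $R$ enlarged by $M/Z$; the prolonged $R$ also shows $N$ has a series of length $n-1<n$, so the inductive hypothesis applies to $N$, forces $m-1=n-1$ (hence $m=n$), and identifies the factor multiset of $0 = Y_0 \rightarrowtail \cdots \rightarrowtail Y_{m-1}$ with that of $R$ enlarged by $N/Z$. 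Combining the two identifications with the diamond-axiom equality of $\{X/M,M/Z\}$ and $\{X/N,N/Z\}$ yields that the factor multiset of $S$ — that of $R$ enlarged by $M/Z$ and $X/M$ — equals that of $T$ — that of $R$ enlarged by $N/Z$ and $X/N$ — which together with $m=n$ completes the induction.

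The step I expect to fight hardest with is the finiteness bookkeeping underlying the auxiliary claim: ensuring that the set-valued intersections $\inter_X(\cdot,\cdot)$ occurring along the way are genuinely non-empty, and that ``length'' is a well-defined, strictly monotone invariant on the admissible subobjects of an object with a finite composition series, so that the whole argument — in reality a simultaneous induction on length combining the auxiliary claim and the theorem — can even be started. A smaller point to get right is that the diamond axiom is phrased ``for every $Y\in\inter_X(A,B)$'': one must verify that the particular common subobject $Z$ selected does lie in $\inter_X(M,N)$ before invoking the axiom for it.
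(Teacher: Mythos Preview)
Your proposal is correct and follows essentially the same approach as the paper: an auxiliary lemma (the paper's Lemma \ref{D2}) showing that any maximal $\E$-subobject of an object with a length-$n$ composition series itself admits a composition series of length $n-1$, followed by the main induction comparing the two given series via the diamond axiom applied to their penultimate terms $X_{n-1}$ and $Y_{m-1}$. Your worry about non-emptiness of $\inter_X(\cdot,\cdot)$ is legitimate but is a gap you share with the paper, which simply writes ``consider an element $Y\in\inter_X(B_{n-1},C)$'' without further justification.
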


\bigskip
Later, in Section 6, we use the new
the notion of generalised intersection to define an analog of the Jacobson radical for exact categories, the \emph{$\E-$Jacobson radical}, $\radical(X)$, as the generalised intersection of all maximal $\E$-subobjects of $X$  and also introduce the notion of \emph{$\E-$semisimple} objects (see Definitions \ref{radical} and \ref{esemisimple}). We show some basic properties of the $\E-$Jacobson radical motivated by the properties of the classical Jacobson radical. We then use this to introduce the \emph{$\E-$Artin-Wedderburn categories}, which are exact categories where an analog of the classical Artin-Wedderburn theorem holds:
\begin{definition}[Definition \ref{A-W}]
An exact category $(\A, \E)$ is called \textit{Artin-Wedderburn} if for any object $X \in A$ the following properties are equivalent:
\begin{enumerate}
    \item[(AW1)] Every sequence in $\E$ of the form $A \rightarrowtail X \twoheadrightarrow X/ A$ splits,
    \item[(AW2)] $X$ is $\E$-semisimple,
    \item[(AW3)] $\radical(X):=  \inter_X\{(Y,f) \in \mathcal{S}_X \mid \Coker f \mbox{ is } \E-\mbox{simple}\}= \{0\}$.
\end{enumerate} 
Here $\mathcal{S}_X$ is the poset of all \emph{proper} $\E-$subobjects of $X$ (see Definition \ref{poset of subobjects}).
We call in this case $\E$ an \textit{Artin-Wedderburn} exact structure on $\A$.
\end{definition}
We give examples of such categories and prove in Lemma \ref{Emin is W-A}, that every additive category with the minimal exact structure $\E_{min}$ in the lattice $(Ex(\A), \subseteq)$; the split exact structure, is an $\E-$Artin-Wedderburn category.
Then, by showing that certain $\E-$Artin-Wedderburn categories satisfy the Diamond axiom, we obtain the following result:
\begin{theorem}[Theorem \ref{thm:AW->JH}]
Let $(\A, \E)$ be a Krull-Schmidt $\E$-Artin-Wedderburn category. Then $(\A, \E)$ is a Jordan-H\"{o}lder exact category.
\end{theorem}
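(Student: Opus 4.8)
The plan is to show that every Krull--Schmidt $\E$-Artin--Wedderburn category satisfies the Diamond axiom (Definition~\ref{Diamond}) and then to invoke Theorem~\ref{JH}. So fix $X \in \A$, two distinct maximal $\E$-subobjects $(A,f)$ and $(B,g)$ of $X$ (so $X/A$ and $X/B$ are $\E$-simple) and some $Y \in \inter_X(A,B)$; note that $Y \rightarrowtail X$ is admissible, being the common value of the two factorisations of the embedding $Y\rightarrowtail X$ through $A$ and through $B$. The argument will take place inside the quotient $Z := X/Y$, with $\bar A := A/Y$ and $\bar B := B/Y$.

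First I would invoke the fourth $\E$-isomorphism theorem (Proposition~\ref{4 iso}) for $Y \rightarrowtail X \twoheadrightarrow Z$: taking quotients by $Y$ is a poset isomorphism between the $\E$-subobjects of $X$ containing $Y$ and all $\E$-subobjects of $Z$. Hence $\bar A$ and $\bar B$ are distinct maximal proper $\E$-subobjects of $Z$ with $Z/\bar A \cong X/A$ and $Z/\bar B \cong X/B$ both $\E$-simple, and --- using the maximality of $Y$ inside $\inter_X(A,B)$ --- one checks that $\inter_Z(\bar A,\bar B)=\{0\}$, since a nonzero common proper admissible subobject of $\bar A,\bar B$ in $Z$ would correspond to a common proper admissible subobject of $A,B$ in $X$ strictly containing $Y$. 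As $\bar A$ and $\bar B$ lie among the maximal $\E$-subobjects of $Z$, every member of the generalised intersection $\radical(Z)$ sits below some member of $\inter_Z(\bar A,\bar B)=\{0\}$, so $\radical(Z)=\{0\}$, i.e.\ $Z$ satisfies (AW3). By the Artin--Wedderburn property, $Z$ is then $\E$-semisimple and every short exact sequence in $\E$ with middle term $Z$ splits.

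Next I would establish (this is the technical heart) that in a Krull--Schmidt $\E$-Artin--Wedderburn category the $\E$-semisimple objects behave like classical finite-length semisimple modules: each is a finite direct sum of $\E$-simples, admits a well-defined $\E$-composition length, every admissible subobject of it is a direct summand, its poset of admissible subobjects is a modular lattice (so that $\bar A\wedge\bar B$ exists as an admissible subobject sitting admissibly inside both $\bar A$ and $\bar B$), and Krull--Schmidt cancellation applies. Granting this, $\bar A\wedge\bar B$ is proper in each of $\bar A,\bar B$ --- if, say, $\bar A\leq\bar B$, then maximality of $\bar A$ among proper $\E$-subobjects forces $\bar A=\bar B$, a contradiction --- hence it is a common proper admissible subobject of $\bar A,\bar B$ and therefore lies below $\inter_Z(\bar A,\bar B)=\{0\}$, i.e.\ $\bar A\wedge\bar B=0$. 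A length count in the modular lattice (using $\bar A\wedge\bar B=0$, that $\bar A\vee\bar B$ is an admissible subobject of $Z$, and $|Z|=|\bar A|+1$ from the split sequence $\bar A\rightarrowtail Z\twoheadrightarrow Z/\bar A$) forces $\bar B$ to be $\E$-simple and $\bar A\vee\bar B\cong\bar A\oplus\bar B\cong Z\cong\bar A\oplus Z/\bar A$; cancelling $\bar A$ gives $\bar B\cong Z/\bar A$, and symmetrically $\bar A\cong Z/\bar B$.

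Unwinding via $\bar A=A/Y$, $\bar B=B/Y$, $Z/\bar A\cong X/A$ and $Z/\bar B\cong X/B$, this says precisely that $A/Y$ and $B/Y$ are $\E$-simple, isomorphic to the $\E$-simple cokernels $X/B$ and $X/A$ of $X$, and that $\{X/A,A/Y\}=\{X/A,X/B\}=\{X/B,B/Y\}$ up to permutation and isomorphism --- which is exactly the Diamond axiom for $(A,f),(B,g)$. Since $X,A,B,Y$ were arbitrary, $(\A,\E)$ is a diamond exact category, and Theorem~\ref{JH} finishes the proof. Everything outside the third paragraph is bookkeeping with Proposition~\ref{4 iso} and the definition of $\radical$; I expect the classification of $\E$-semisimple objects in an $\E$-Artin--Wedderburn category --- the one place where both the Artin--Wedderburn and the Krull--Schmidt hypotheses are genuinely needed --- to be the main obstacle.
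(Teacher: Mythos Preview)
Your overall strategy matches the paper's: verify the Diamond Axiom and invoke Theorem~\ref{JH}. The reduction via Proposition~\ref{4 iso} to the quotient $Z=X/Y$ with $\inter_Z(\bar A,\bar B)=\{0\}$, and the deduction $\radical(Z)=\{0\}$ (hence $Z$ is $\E$-semisimple and both sequences $\bar A\rightarrowtail Z$ and $\bar B\rightarrowtail Z$ split), are exactly what the paper does.

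The gap is in your third paragraph. The claim that the poset of admissible subobjects of an $\E$-semisimple object in a Krull--Schmidt Artin--Wedderburn category is a (modular) lattice is \emph{false}, so your length count cannot be carried out. Example~\ref{example A2 again} together with Lemma~\ref{Emin is W-A} gives a counterexample: $(\rep A_2,\E_{min})$ is Krull--Schmidt and Artin--Wedderburn, every object is $\E_{min}$-semisimple, yet for $X=S_2\oplus P_1\oplus S_1$ the generalised intersection of two distinct maximal subobjects $(S_2\oplus P_1,\alpha_\lambda)$ is infinite (see the discussion after Definition~\ref{general intersection and sum}); in particular no meet $\bar A\wedge\bar B$ exists. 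This is precisely the phenomenon that motivated the generalised intersection in the first place, and it does not disappear under the Artin--Wedderburn hypothesis.

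The paper's finish avoids any lattice structure. From the two splittings one has $Z\cong \bar A\oplus Z/\bar A\cong \bar B\oplus Z/\bar B$ with $Z/\bar A$ and $Z/\bar B$ $\E$-simple. Since $Z$ is $\E$-semisimple and $\A$ is Krull--Schmidt, one may write $\bar A\cong\bigoplus_{j=0}^n S_j$ and $\bar B\cong\bigoplus_{j=0}^m T_j$ with $\E$-simple summands; Krull--Schmidt uniqueness of the decomposition of $Z$ then forces $n=m$ and matches the multisets $\{S_0,\dots,S_n,Z/\bar A\}$ and $\{T_0,\dots,T_n,Z/\bar B\}$. After pairing $S_0\cong Z/\bar B$ and $T_0\cong Z/\bar A$, the remaining summand $\bigoplus_{j\ge1}S_j$ sits as a common split $\E$-subobject of both $\bar A$ and $\bar B$, hence lies below $\inter_Z(\bar A,\bar B)=\{0\}$, so $n=0$. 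Thus $\bar A\cong Z/\bar B$ and $\bar B\cong Z/\bar A$ are $\E$-simple, which is exactly the Diamond conclusion. In short: replace your modular-lattice length argument by a direct Krull--Schmidt matching of summands.
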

We then give for any Nakayama algebra, $\Lambda$,  an explicit description of all exact structures on $\rep \Lambda$ in Theorem \ref{EB description} and use this to characterise all Artin-Wedderburn exact structures on $\rep \Lambda$ in Theorem \ref{EAW example}. It turns out these they are exactly the Jordan-H\"older exact structures on $\rep {\Lambda}$:
\begin{theorem}[Theorem \ref{Nakayama}]
Let $\Lambda$ be a Nakayama algebra, and denote $\A = \mod \Lambda$, the category of finitely generated left $\Lambda-$modules.
Then an exact category $(\A,\E)$ is $\E-$Artin-Wedderburn precisely when it is Jordan-H\"older.
\end{theorem}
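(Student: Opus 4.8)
The plan is to establish the two implications separately. For ``$\E$-Artin--Wedderburn $\Rightarrow$ Jordan--H\"older'' there is nothing Nakayama-specific to do: since $\Lambda$ is an Artin algebra, $\A = \mod \Lambda$ is a $\Hom$-finite, idempotent complete, hence Krull--Schmidt, additive category, so Theorem \ref{thm:AW->JH} applies directly and shows that every $\E$-Artin--Wedderburn exact structure on $\A$ is a Jordan--H\"older exact structure.

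For the converse ``Jordan--H\"older $\Rightarrow$ $\E$-Artin--Wedderburn'' I would argue by contraposition, using the explicit classification. Let $\E \in Ex(\A)$ be an exact structure that is not $\E$-Artin--Wedderburn. By Theorem \ref{EB description} the structure $\E$ is encoded by explicit combinatorial data attached to the Auslander--Reiten quiver of $\Lambda$ (governed by its Kupisch series), and by Theorem \ref{EAW example} the $\E$-Artin--Wedderburn structures are exactly those whose data satisfies a certain closure condition. Since $\E$ fails this condition, its negation singles out a small ``forbidden configuration'' of indecomposables and conflations that $\E$ must contain. The heart of the argument is then to show that such a configuration always yields an object $X$ together with two $\E$-composition series of $X$ that are not equivalent, so that $\E$ is not Jordan--H\"older.

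To build these two series I would follow the prototypes in Examples \ref{Counterexample1} and \ref{example A2 again}: the forbidden configuration provides a non-split conflation $A \rightarrowtail X \twoheadrightarrow S$ with $S$ an $\E$-simple and with $X$ simultaneously $\E$-semisimple, so that (AW1)--(AW3) genuinely diverge at $X$. Refining $0 \rightarrowtail A \rightarrowtail X$ on one hand, and splitting off an $\E$-simple summand of $X$ using its $\E$-semisimplicity on the other, produces two $\E$-composition series of $X$; here the Fourth $\E$-isomorphism Theorem (Proposition \ref{4 iso}) is used to identify the intermediate terms and to verify that every monic occurring is $\E$-admissible with $\E$-simple cokernel, as required by Definition \ref{composition series}. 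Comparing composition factors (or just lengths) then shows the two series are inequivalent.

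The step I expect to be the main obstacle is the bookkeeping in this converse direction: one must read off, from the explicit description of Theorem \ref{EAW example}, the precise object $X$ and the two composition series for \emph{each} way the closure condition can fail, and check their inequivalence. Since the indecomposable $\Lambda$-modules are uniserial this is a finite analysis driven by the Kupisch series, but verifying admissibility of all the monics in the constructed series against the concrete list of conflations of $\E$ coming from Theorem \ref{EB description} is the delicate point.
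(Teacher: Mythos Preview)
Your forward implication is identical to the paper's: both invoke Theorem~\ref{thm:AW->JH}, using that $\mod\Lambda$ is Krull--Schmidt.

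For the converse, however, the paper takes a very different and much shorter route than your proposed contrapositive construction. Instead of exhibiting two inequivalent composition series from a forbidden configuration, the paper argues directly: assuming $(\A,\E)$ is Jordan--H\"older, it invokes Enomoto's result \cite[Theorem~4.13]{E19} that the number $s$ of $\E$-simple indecomposables equals the number $p$ of $\E$-projective indecomposables. Since the non-$\E$-projectives are exactly those indecomposables at which an Auslander--Reiten sequence in $\B$ terminates, one gets $|ind(\A)| = |\B| + s$. But every Auslander--Reiten sequence in $\B$ has a distinct top module, and these top modules together with the $\E$-simples therefore exhaust $ind(\A)$; in particular no top module can be $\E$-simple, which by Theorem~\ref{EAW example} is exactly the Artin--Wedderburn condition. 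This counting argument sidesteps entirely the case analysis you anticipate.

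Your plan is not wrong in spirit, but it has a gap as stated: you assert that the forbidden configuration yields a non-split conflation $A\rightarrowtail X\twoheadrightarrow S$ with $X$ $\E$-semisimple, yet the proof of Theorem~\ref{EAW example} only guarantees this when the intermediate object $[x,y]$ happens to be $\E$-simple. In the remaining case one only obtains a failure of (AW3)$\Rightarrow$(AW1), i.e.\ an object with trivial $\E$-radical that is not $\E$-semisimple, and it is not clear from your sketch how to extract two inequivalent composition series there. Completing your approach would require handling this second case separately, whereas the paper's counting argument avoids the distinction altogether.
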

Once satisfied, the $\E-$Jordan-H\"older property allows  to define \emph{the $\E-$Jordan-H\"older length function} (compare also \cite[4.1]{E19}):
\begin{definition}[Definition \ref{length}] The $\E-$Jordan-H\"older length $l_{\mathcal{E}}(X)$ of 
an object $X$ in $\A$
is the length of an $\E-$composition series of $X$. That is $l_{\mathcal{E}}(X)=n$ if and only if there exists an $\E-$composition series 
\[
\begin{tikzcd}0=X_0 \arrow[r, tail] & X_1 \arrow[r, tail] & \dots \arrow[r, tail]            & X_{n-1} \arrow[r, tail] & X_n=X. \end{tikzcd}\] 
We say in this case that $X$ is $\E-$finite.
\end{definition}
This $\E-$Jordan-H\"older length function has good properties that improves the general length defined and studied on any exact category in \cite[Definition 6.1, Theorem 6.6]{BHLR} in which there is only an inequality. 
\begin{proposition}[Corollary \ref{s.e.s length}] Let $X \; \rightarrowtail Z \twoheadrightarrow$ Y be an admissible short exact sequence of finite length objects. Then

$$l_{\mathcal{E}}(Z) = l_{\mathcal{E}}(X) + l_{\mathcal{E}}(Y).$$
\end{proposition}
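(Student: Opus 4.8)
The plan is to construct an explicit $\E$-composition series of $Z$ of length $l_{\mathcal{E}}(X) + l_{\mathcal{E}}(Y)$ and then invoke the Jordan-H\"older property --- which is in force here, since $l_{\mathcal{E}}$ is only defined for Jordan-H\"older exact categories --- to conclude that this series computes $l_{\mathcal{E}}(Z)$. So fix $\E$-composition series
\[ 0 = X_0 \rightarrowtail X_1 \rightarrowtail \cdots \rightarrowtail X_m = X, \qquad 0 = Y_0 \rightarrowtail Y_1 \rightarrowtail \cdots \rightarrowtail Y_n = Y \]
with $m = l_{\mathcal{E}}(X)$ and $n = l_{\mathcal{E}}(Y)$, and write the given sequence as $X \rightarrowtail Z \twoheadrightarrow Y$ so that $Y = Z/X$.

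Each $Y_i$ is an admissible subobject of $Z/X$, so by the fourth $\E$-isomorphism theorem (Proposition \ref{4 iso}) the chain $(Y_i)_i$ is the image, under the poset isomorphism $M \mapsto M/X$, of a chain of admissible subobjects $X = Z_0 \rightarrowtail Z_1 \rightarrowtail \cdots \rightarrowtail Z_n = Z$ with $Z_i/X \cong Y_i$ compatibly; in particular each $Z_{i-1} \rightarrowtail Z_i$ is an admissible monic. To identify its cokernel, apply the Noether isomorphism theorem for exact categories to the composable admissible monics $X \rightarrowtail Z_{i-1} \rightarrowtail Z_i$: this yields a short exact sequence $Z_{i-1}/X \rightarrowtail Z_i/X \twoheadrightarrow Z_i/Z_{i-1}$, whose left-hand map is identified by Proposition \ref{4 iso} with $Y_{i-1} \rightarrowtail Y_i$. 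Hence $\Coker(Z_{i-1} \rightarrowtail Z_i) \cong Y_i/Y_{i-1}$, which is $\E$-simple --- in particular nonzero --- so $Z_{i-1} \rightarrowtail Z_i$ is a proper admissible monic with $\E$-simple cokernel. Concatenating,
\[ 0 = X_0 \rightarrowtail \cdots \rightarrowtail X_m = X = Z_0 \rightarrowtail Z_1 \rightarrowtail \cdots \rightarrowtail Z_n = Z \]
is an $\E$-composition series of $Z$ of length $m+n$; in particular $Z$ is $\E$-finite, and by the Jordan-H\"older property $l_{\mathcal{E}}(Z) = m+n = l_{\mathcal{E}}(X) + l_{\mathcal{E}}(Y)$.

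The only step requiring real care is the identification of the subquotients of the lifted chain with those of $Y$, i.e.\ the use of the Noether isomorphism theorem for exact categories; if that has not already been recorded, it follows quickly from the obscure axiom, or can be obtained directly from Proposition \ref{4 iso} via $Z_i/Z_{i-1} \cong (Z_i/X)/(Z_{i-1}/X) \cong Y_i/Y_{i-1}$. Everything else --- that composites of admissible monics are admissible, that the lifted maps are admissible monics, and that a nonzero $\E$-simple cokernel forces properness --- is routine bookkeeping with the exact category axioms.
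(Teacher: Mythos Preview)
Your proof is correct and follows essentially the same route as the paper: lift the composition series of $Y\cong Z/X$ through the fourth $\E$-isomorphism theorem to a chain $X=Z_0\rightarrowtail\cdots\rightarrowtail Z_n=Z$, identify the successive quotients via the Noether-type isomorphism $(Z_i/X)/(Z_{i-1}/X)\cong Z_i/Z_{i-1}$ (the paper cites this as \cite[Lemma 3.5]{Bu}), and concatenate with the given series for $X$. The only cosmetic difference is that you state the argument a bit more carefully, noting explicitly that the $\E$-simple cokernel is nonzero and hence each monic is proper.
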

Moreover this length function satisfies also important properties as:
\begin{proposition}[Proposition \ref{hopkinslevitzki}] An object $X$ of $(\A, \E)$ is $\E-$Artinian and $\E-$Noetherian if and only if it has an $\E-$finite length.
\end{proposition}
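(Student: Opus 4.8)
The statement is the exact-category form of the classical fact that a module has finite length exactly when it satisfies both chain conditions, and the plan is to prove the two implications separately, using the fourth $\E$-isomorphism theorem (Proposition~\ref{4 iso}) as the main tool and, in the ``only if'' direction, the additivity of the $\E$-Jordan-H\"older length $l_{\mathcal E}$ (Corollary~\ref{s.e.s length}).

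For the direction ``$\E$-Artinian and $\E$-Noetherian $\Rightarrow$ $\E$-finite'' I would first record two observations. (i) A nonzero $\E$-Artinian object $W$ has an $\E$-simple $\E$-subobject: the poset of nonzero $\E$-subobjects of $W$ is nonempty (it contains $W$), so by the descending chain condition it has a minimal element $S$, and any proper nonzero admissible monic into $S$ composes with $S\rightarrowtail W$ to give a nonzero $\E$-subobject of $W$ strictly below $S$, whence $S$ is $\E$-simple. (ii) $\E$-subobjects and admissible quotients of an $\E$-Artinian (resp. $\E$-Noetherian) object are again $\E$-Artinian (resp. $\E$-Noetherian): for subobjects compose admissible monics, and for a quotient $W/V$ use that Proposition~\ref{4 iso} identifies its poset of $\E$-subobjects with the poset of $\E$-subobjects of $W$ containing $V$. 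Now let $X$ be $\E$-Artinian and $\E$-Noetherian and consider the set of $\E$-finite $\E$-subobjects of $X$; it is nonempty (it contains $0$), so by the ascending chain condition it has a maximal element $Y$. If $Y\neq X$, then $X/Y$ is nonzero and $\E$-Artinian, so by (i) it has an $\E$-simple $\E$-subobject, which by Proposition~\ref{4 iso} lifts to an $\E$-subobject $Y'\supsetneq Y$ of $X$ with $Y\rightarrowtail Y'$ a proper admissible monic of $\E$-simple cokernel; appending this monic to a composition series of $Y$ exhibits $Y'$ as $\E$-finite, contradicting maximality. Hence $Y=X$ and $X$ is $\E$-finite.

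For the converse ``$\E$-finite $\Rightarrow$ $\E$-Artinian and $\E$-Noetherian'' I would show that every chain $Z_0\rightarrowtail Z_1\rightarrowtail\cdots\rightarrowtail Z_k$ of $\E$-subobjects of $X$ has at most $l_{\mathcal E}(X)$ terms, which gives both chain conditions at once. Granting that every $\E$-subobject of the $\E$-finite object $X$ is again $\E$-finite, each $Z_i$ is $\E$-finite, so for a proper admissible monic $Z_i\rightarrowtail Z_{i+1}$ the short exact sequence $Z_i\rightarrowtail Z_{i+1}\twoheadrightarrow Z_{i+1}/Z_i$ consists of $\E$-finite objects with $Z_{i+1}/Z_i\neq 0$, and Corollary~\ref{s.e.s length} gives $l_{\mathcal E}(Z_{i+1})=l_{\mathcal E}(Z_i)+l_{\mathcal E}(Z_{i+1}/Z_i)>l_{\mathcal E}(Z_i)$; hence $0=l_{\mathcal E}(Z_0)<\cdots<l_{\mathcal E}(Z_k)\le l_{\mathcal E}(X)$, so $k\le l_{\mathcal E}(X)$.

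The step I expect to be the main obstacle is precisely the one granted above: that an $\E$-subobject of an $\E$-finite object is $\E$-finite (the analogous statement for admissible quotients being immediate from Proposition~\ref{4 iso}). Classically one proves this by intersecting the subobject with the terms of a composition series of the ambient object --- the Baumslag-style argument recalled in the introduction --- but admissible subobjects of a general exact category need not admit admissible intersections, so that argument is unavailable; this is exactly the phenomenon motivating the generalised intersection of Definition~\ref{general intersection and sum}. I would instead induct on $l_{\mathcal E}(X)$: choosing an $\E$-simple $\E$-subobject $X_1\rightarrowtail X$ (so $X/X_1$ is $\E$-finite of strictly smaller length), an $\E$-subobject $Z$ of $X$ containing $X_1$ reduces via Proposition~\ref{4 iso} to the $\E$-subobject $Z/X_1$ of $X/X_1$, to which the inductive hypothesis applies, while the remaining, genuinely delicate case is when $Z$ is transverse to $X_1$ --- this is where I would bring in the generalised intersection together with the stability of admissible epics under pullback and the additivity of $l_{\mathcal E}$. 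Once this lemma is in place, the chain bookkeeping of the previous paragraph, and hence the proposition, follows routinely.
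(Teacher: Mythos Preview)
Your forward direction (``Artinian and Noetherian $\Rightarrow$ $\E$-finite length'') is the paper's Lemma~\ref{c.s}, just with the details spelt out; the paper's proof of that lemma is a two-line version of exactly your argument (build upward through $\E$-simple quotients using the Artinian condition on each successive cokernel, terminate by Noetherianity).

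For the converse you are working much harder than the paper does, and the extra work you identify as the ``main obstacle'' is not one the paper has to face. Section~7 carries the standing hypothesis that $(\A,\E)$ is an $\E$-finite Jordan--H\"older category, i.e.\ \emph{every} object of $\A$ is $\E$-Artinian and $\E$-Noetherian. Under that hypothesis, any $\E$-subobject $Z$ of $X$ is itself Artinian and Noetherian, hence by Lemma~\ref{c.s} already has a finite $\E$-composition series; your ``granted'' step is therefore immediate, and the chain bookkeeping via Corollary~\ref{s.e.s length} is exactly the content of Theorem~\ref{lengththm}, which the paper has already recorded. The paper's own proof of this direction is accordingly a single sentence (also citing \cite[Lemma~6.5]{BHLR}): the composition series has length $n$, so any chain of proper $\E$-subobjects stabilises.

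Your proposed inductive attack on the transverse case, by contrast, does not clearly go through without that standing hypothesis. The generalised intersection of Definition~\ref{general intersection and sum} is a \emph{set} of maximal common subobjects, not a single object, and there is no second-isomorphism theorem available for it; so the phrase ``bring in the generalised intersection together with stability of admissible epics under pullback and additivity of $l_{\E}$'' does not pin down an argument. If you want to prove the converse \emph{without} the blanket $\E$-finiteness assumption, that is a genuinely harder (and possibly different) statement than what the paper claims; inside Section~7, you should instead just invoke the standing hypothesis and Lemma~\ref{c.s} to dispose of the subobject issue in one line.
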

Finally, the $\E-$Jordan H\"older length function can only decrease under reduction of exact structures:

\begin{proposition} [Proposition \ref{lengthreduction}]
 If $\E$ and $\E'$ are exact structures on $\A$ such that $\E' \subseteq \E$, then $l_{\E'}(X) \leq l_{\E}(X)$ for all objects $X$ in $\A$.
\end{proposition}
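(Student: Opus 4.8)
The plan is to exploit the fact that enlarging an exact structure only enlarges the posets of admissible subobjects: an $\E'$-composition series of $X$ is automatically a chain of proper admissible $\E$-monics, and one refines it, without decreasing its length, to a genuine $\E$-composition series. If $X$ is not $\E$-finite there is nothing to prove (read $l_{\E}(X)=\infty$), so assume $X$ is $\E$-finite and fix an $\E'$-composition series
\[ 0=X_0 \rightarrowtail X_1 \rightarrowtail \cdots \rightarrowtail X_n = X \]
of length $n=l_{\E'}(X)$. Because $\E'\subseteq\E$, every $X_l\rightarrowtail X_{l+1}$ is a proper admissible monic in $\E$, so each $X_l$ is an admissible $\E$-subobject of $X$ and each $\E'$-simple cokernel $S_l:=X_{l+1}/X_l$ is a nonzero object of $\A$.

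First I would show each $S_l$ is $\E$-finite. By Proposition \ref{hopkinslevitzki}, $X$ is $\E$-Artinian and $\E$-Noetherian. By the fourth $\E$-isomorphism theorem (Proposition \ref{4 iso}), the admissible subobjects of $S_l$ correspond bijectively and order-preservingly to the admissible subobjects $M$ of $X_{l+1}$ containing $X_l$; and each such $M$ is an admissible subobject of $X$, since $M\rightarrowtail X_{l+1}$ and $X_{l+1}\rightarrowtail X$ (a composite of admissible monics) are both admissible. Hence every ascending or descending chain of admissible subobjects of $S_l$ gives one in $X$ and therefore stabilises, so $S_l$ is $\E$-Artinian and $\E$-Noetherian, hence $\E$-finite by Proposition \ref{hopkinslevitzki}. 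Thus $S_l$ admits an $\E$-composition series $0=T^l_0\rightarrowtail T^l_1\rightarrowtail\cdots\rightarrowtail T^l_{k_l}=S_l$, with $k_l\ge 1$ since $S_l\neq 0$. Applying Proposition \ref{4 iso} to $X_l\rightarrowtail X_{l+1}\twoheadrightarrow S_l$, the $T^l_j$ correspond to a chain $X_l=Y^l_0\rightarrowtail Y^l_1\rightarrowtail\cdots\rightarrowtail Y^l_{k_l}=X_{l+1}$ of admissible $\E$-monics with $Y^l_j/X_l\cong T^l_j$, and the consecutive quotients satisfy $Y^l_{j+1}/Y^l_j\cong T^l_{j+1}/T^l_j$, hence are $\E$-simple; in particular each $Y^l_j\rightarrowtail Y^l_{j+1}$ is proper.

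Splicing the refinements $X_l=Y^l_0\rightarrowtail\cdots\rightarrowtail Y^l_{k_l}=X_{l+1}$ into the original series for $l=0,\dots,n-1$ produces an $\E$-composition series of $X$ of length $\sum_{l=0}^{n-1}k_l$, and since each $k_l\ge1$ this length is at least $n$; as $l_{\E}(X)$ is the length of an $\E$-composition series of $X$, we get $l_{\E}(X)=\sum_{l}k_l\ge n=l_{\E'}(X)$. The main obstacle is exactly this refinement step: an $\E'$-simple object need not be $\E$-simple — which is precisely why the inequality can be strict — so one cannot simply compare the lists of composition factors, and the work lies in producing an $\E$-composition series of each $\E'$-simple quotient $S_l$, which is what forces the $\E$-Artinian/$\E$-Noetherian bookkeeping. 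As an alternative one can induct on $l_{\E}(X)$: split off $X_1$, which is $\E'$-simple and hence nonzero so that $l_{\E}(X_1)\ge1$; apply Corollary \ref{s.e.s length} to $X_1\rightarrowtail X\twoheadrightarrow X/X_1$ (both terms being $\E$-finite as above) to get $l_{\E}(X)=l_{\E}(X_1)+l_{\E}(X/X_1)\ge 1+l_{\E}(X/X_1)$; and observe that quotienting the $\E'$-composition series by $X_1$, again via Proposition \ref{4 iso} applied in $(\A,\E')$, gives an $\E'$-composition series of $X/X_1$ of length $n-1$, so $n-1=l_{\E'}(X/X_1)\le l_{\E}(X/X_1)\le l_{\E}(X)-1$ by the inductive hypothesis.
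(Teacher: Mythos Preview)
Your argument is correct and follows the same opening move as the paper: take an $\E'$-composition series of $X$ and note that all the monics lie in $\E$ because $\E'\subseteq\E$. The paper then simply writes that ``the $\E'$-composition series is also an $\E$-composition series and therefore by definition $l_{\E}(X)\ge n$,'' which glosses over exactly the issue you flag --- an $\E'$-simple quotient need not be $\E$-simple, so the series is a priori only a chain of proper admissible $\E$-monics, not literally an $\E$-composition series in the sense of Definition~\ref{composition series}. You make this step rigorous by showing each quotient $S_l$ is $\E$-finite (via Propositions~\ref{hopkinslevitzki} and~\ref{4 iso}), choosing an $\E$-composition series of $S_l$ of length $k_l\ge1$, lifting it back into the interval between $X_l$ and $X_{l+1}$, and splicing to obtain an honest $\E$-composition series of $X$ of length $\sum_l k_l\ge n$. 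Your alternative inductive argument via Corollary~\ref{s.e.s length} is equally valid and arguably the most direct route. In short: same underlying idea as the paper, but your version supplies the refinement step the paper leaves implicit.
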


%%%%%%%%%%%%%%%%%%%%%%%%%%%%%%%%%%%%%

\paragraph{Acknowledgements.}
The authors would like to thank Charles Paquette for his thoughtful comments on the PhD thesis of the second author. The authors would also like to thank Amit Shah, Sven-Ake Wegner and Sunny Roy for all the discussions contributing to this version of the work. The authors  also thank Matthias K\"unzer and Haruhisa Enomoto for their useful comments. The authors also thank the anonymous referee for their careful reading and helpful comments which have improved the manuscript. The third author thanks their coauthors for their hospitality during visits to Université de Sherbrooke and Sibylle Schroll for helpful discussions. \\
The first two authors were supported by Bishop's University, Université de Sherbrooke and NSERC of Canada. The second author is supported by the scholarship "thésards étoiles" of the ISM. The third author was supported by the EPSRC.

%%%%%%%%%%%%%%%%%%%%%%%%%%%%%%%%%%%%%%

\section{Background}

%%%%%%%%%%%%%%%%%%%%%%%%%%%%%%%%%%%%%%

In this section we recall from \cite{GR,Bu} the definition of a Quillen exact structure along with the definitions of various types of additive categories and other important concepts that form the backdrop to our work.
\subsection{Exact structures on additive categories}
\begin{definition}
An \emph{additive category} $\A$ is a preadditive category (all its hom-sets are abelian groups and composition of morphisms is bilinear) admitting all finite biproducts.
\end{definition}
\begin{definition}\label{exact structure}Let $\mathcal{A}$ be an additive category. A kernel-cokernel pair $(i, d)$ in $\mathcal{A}$ is a pair of composable morphims such that $i$ is kernel of $d$ and $d$ is cokernel of $i$.
If a class $\mathcal{E}$ of kernel-cokernel pairs on $\mathcal{A}$ is fixed, an {\em admissible monic} is a morphism $i$ for which there exist a morphism $d$ such that $(i,d) \in \mathcal{E}$. An {\em admissible epic} is defined dually. Note that admissible monics and admissible epics are referred to as inflation and deflation in \cite{GR}, respectively. 
We depict an admissible monic by  $\rightarrowtail$
and an admissible epic by $\twoheadrightarrow$.
An {\em exact structure} $\mathcal{E}$ on $\A$ is a class of kernel-cokernel pairs $(i, d)$ in $\A$ which is closed under isomorphisms and satisfies the following axioms:
\begin{enumerate}
\item[(A0)] For all objets  $A \in Obj\mathcal{A}$ the identity $1_A$ is an admissible monic,
\item[{(A0)$^{op}$}] For all objets  $A \in Obj\mathcal{A}$ the identity $1_A$ is an admissible epic,
\item[(A1)] The class of admissible monics is closed under composition,
\item[{(A1)}$^{op}$] The class of admissible epics is closed under composition,
\item[(A2)] 
 The pushout of an admissible monic $i: A \rightarrowtail B$ along an arbitrary morphism \mbox{$f: A \to C$} exists and yields an admissible monic $j$:
\[
\begin{tikzcd}
{A} \arrow[r, tail, "i"] \arrow[d, "f"'] & {B} \arrow[d, "g"] \\
{C} \arrow[r, tail, "j"']           & {D,} \arrow[ul, phantom, "\ulcorner" near end]          
\end{tikzcd}
\]

\item[{(A2)}$^{op}$]The pullback of an admissible epic $h$ along an arbitrary morphism $g$ exists and yields an admissible epic $k$
\[
\begin{tikzcd}
{A} \arrow[r, two heads, "k"] \arrow[d, "f"'] & {B} \arrow[d, "g"] \\
{C} \arrow[r, two heads, "h"']           & {D.} \arrow[ul, phantom, "\lrcorner" very near start]          
\end{tikzcd}
\]
\end{enumerate}
An {\em exact category} is a pair $(\mathcal{A}, \mathcal{E})$ consisting of an additive category $\mathcal{A}$ and an exact structure $\mathcal{E}$ on $\mathcal{A}$. The pairs $(i,d)$ forming the class $\mathcal{E}$ are called {\em admissible short exact sequences}, {\em $\E$-sequences}, or just {\em admissible sequences.}

\end{definition}

\begin{definition}
\cite[Definition 8.1]{Bu}\label{ad mor}
A morphism $f: A \to B$ in an exact category is called admissible if it factors as $f = me$ where $m$ is an admissible monic and $e$ is
an admissible epic. Admissible morphisms will sometimes be displayed as 
\[ \begin{tikzcd} A\arrow[r, "\circ" description,"f"] & B \end{tikzcd} \]
in diagrams, and the classes of admissible arrows of $\A$ will be denoted as ${\Hom^{ad}_{\A}}(-,-)$.
\end{definition}

\begin{proposition}
\cite[Proposition 2.16]{Bu}\label{obscure axiom}
Suppose that $i: A\rightarrow B$ is a morphism in $\A$ admitting a cokernel. If there exists a morphism $j:B\rightarrow C$ such that the composition $j\circ i: A\rightarrowtail C$ is an admissible monic, then $i$ is an admissible monic.
\end{proposition}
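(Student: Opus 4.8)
\emph{Proof strategy.} The plan is to build, out of the admissible monic $ji$ and the given cokernel of $i$, an explicit admissible short exact sequence whose inflation is $i$. Beyond the pushout axiom (A2) and its dual (the pullback axiom for admissible epics), the argument will use only three standard facts: that pushing out an admissible short exact sequence along an arbitrary morphism again yields an admissible short exact sequence (keeping the third term), that a finite direct sum of admissible epics is an admissible epic, and the elementary lemma that a split monomorphism possessing a cokernel is an admissible monic. For this last lemma one checks that, if $n$ has retraction $\rho$ and cokernel $c$, the idempotent $1-n\rho$ factors through $c$; a short computation then shows $c$ is a split epimorphism, that $(n,c)$ is a kernel--cokernel pair, and that it is isomorphic to the split pair $X\rightarrowtail X\oplus Z\twoheadrightarrow Z$, which belongs to every exact structure.

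First I would fix a cokernel $d\colon C\twoheadrightarrow D$ of $ji$, so that $A\xrightarrow{ji}C\xrightarrow{d}D$ is an $\E$-sequence, together with the given cokernel $p\colon B\to B/A$ of $i$, and form the pushout of this $\E$-sequence along $i\colon A\to B$. This produces an $\E$-sequence $B\xrightarrow{u}E\xrightarrow{q}D$ and a morphism $t\colon C\to E$ with $ui=t(ji)$ and $qt=d$; in particular $u$ is an admissible monic. The universal property of the pushout applied to $(j,1_C)$ (legitimate since $j\circ i=1_C\circ ji$) yields $\rho\colon E\to C$ with $\rho u=j$ and $\rho t=1_C$, so $t$ is a split monomorphism; applied to $(p,0)$ (legitimate since $pi=0=0\cdot ji$) it yields $\bar p\colon E\to B/A$ with $\bar p u=p$ and $\bar p t=0$, and since $u,t$ are jointly epic and $p=\coker i$ one checks $\bar p=\coker t$. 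By the lemma, $t\colon C\rightarrowtail E$ is then an admissible monic and $C\xrightarrow{t}E\xrightarrow{\bar p}B/A$ is a split $\E$-sequence; in particular $\binom{\rho}{\bar p}\colon E\to C\oplus B/A$ is an isomorphism, whose inverse I write $(t,s)$ for the induced section $s$ of $\bar p$, and the splitting identity reads $1_E=t\rho+s\bar p$.

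The heart of the argument is the comparison morphism $v:=\binom{q}{\bar p}\colon E\to D\oplus B/A$, which I claim is an admissible epic. Indeed $q=q(t\rho+s\bar p)=d\rho+q_2\bar p$ with $q_2:=qs$, so
\[ v=\bsm 1_D & q_2 \\ 0 & 1_{B/A}\esm(d\oplus 1_{B/A})\binom{\rho}{\bar p}, \]
which exhibits $v$ as the composite of the admissible epic $d\oplus 1_{B/A}$ with the isomorphisms $\binom{\rho}{\bar p}$ and the displayed shear; hence $v$ is an admissible epic. Transporting $\ker(d\oplus 1_{B/A})=\ker d\oplus 0$ through $(t,s)$ and using $t(ji)=ui$, one finds that the kernel of $v$ is $ui\colon A\to E$, so $A\xrightarrow{ui}E\xrightarrow{v}D\oplus B/A$ is an $\E$-sequence. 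Finally I would pull this $\E$-sequence back along the coordinate inclusion $\binom{0}{1_{B/A}}\colon B/A\rightarrowtail D\oplus B/A$ using the pullback axiom for admissible epics; its total object is $E\times_{D\oplus B/A}B/A\cong\ker q$, which $u$ identifies with $B$, and under this identification the deflation of the pulled-back sequence becomes $\bar p u=p$ while the inflation becomes $i$. Since $\E$ is closed under isomorphism, $A\xrightarrow{i}B\xrightarrow{p}B/A$ lies in $\E$, so $i$ is an admissible monic.

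The step I expect to be the real obstacle is precisely the recognition that $v=\binom{q}{\bar p}$ is an admissible epic: in a general exact category a composite of an admissible monic followed by an admissible epic need not be an admissible epic, so this cannot be obtained formally and must be forced by the concrete splitting $E\cong C\oplus B/A$ produced above. Everything else is bookkeeping with pushouts and pullbacks; the only non-trivial prerequisites are that pushing out an $\E$-sequence again gives an $\E$-sequence and that a direct sum of admissible epics is an admissible epic, both of which follow from the axioms without invoking the statement being proved.
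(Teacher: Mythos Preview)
The paper does not prove this proposition; it is simply quoted from \cite[Proposition~2.16]{Bu}, so there is no in-paper argument to compare your proposal against. Your proof is correct. A shorter finish is available once you have established that $t$ is a split monomorphism with cokernel $\bar p$ (hence an admissible monic): the pushout square of $ji$ along $i$ is automatically bicartesian, so $i$ arises as the pullback of the admissible monic $t$ along $u$, and the standard fact that pullbacks of admissible monics are admissible monics (established in \cite{Bu} just prior to 2.16) then gives the conclusion immediately---this is essentially B\"uhler's own argument. Your longer route via the auxiliary admissible epic $v=\binom{q}{\bar p}$ and a further pullback along $\binom{0}{1}$ is nonetheless valid and has the virtue of relying only on the bare pullback axiom (A2)$^{op}$ for admissible epics rather than the derived statement about monics; the factorisation you display does exhibit $v$ as an admissible epic, and the identification of the final pullback with $B$ via $\ker q=u$ is correct.
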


\begin{definition}An additive category $\A$ is  \emph{pre-abelian} if it has kernels and cokernels.
\end{definition}
\begin{remark}Let $\A$ be a \emph{pre-abelian} category, then it admits pullbacks and pushouts.
\end{remark}

\begin{definition}\cite[page 524]{RW77}  A kernel $(A,f)$ is called \emph{semi-stable} if for every existing pushout square 
\[
\begin{tikzcd}
{A} \arrow[r, "f"] \arrow[d, "t"'] & {B} \arrow[d, "s_B"] \\
{C} \arrow[r, "s_C"'] & {S} \arrow[ul, phantom, "\ulcorner" near end]       
\end{tikzcd}
\]

\noindent the morphism $s_C$ is also a kernel.
We define dually a \emph{semi-stable} cokernel.
A short exact sequence $ \begin{tikzcd} A \arrow[r, tail, "i"] & B \arrow[r, "d", two heads] & C \end{tikzcd} $ is said to be \emph{stable} if
$i$ is a semi-stable kernel and $d$ is a semi-stable cokernel. We denote by $\E_{sta}$ the class of all \emph{stable} short exact sequences.
\end{definition}
\begin{remark}\cite[Theorem 3.3]{SW11}
In a pre-abelian category $\A$, the class $\E_{sta}$ forms an exact structure.
\end{remark}

\begin{definition}Let $\A$ be a pre-abelian category. A morphism $f$ is called \emph{strict} if the canonical map 
$\bar{f}$ (which is the unique morphism that exists between the image and the coimage of $f$) is an isomorphism: $Coim(f)\cong Im(f)$.\\
A short exact sequence $ \begin{tikzcd} A \arrow[r, tail, "i"] & B \arrow[r, "d", two heads] & C \end{tikzcd}$ is said \emph{strict} if
$i$ is strict or $d$ is strict. We denote by $\E_{str}$ the class of all \emph{strict} short exact sequences.\\
The class $\E_{str}$ defines the maximal exact structure $\E_{max}=\E_{str}$ in any pre-abelian category, as shown in \cite{SW11}.

\end{definition}
\begin{remark}We denote by $\E_{all}$ the \emph{class} of all short exact sequences in an additive category $\A$. We use the notation $\E_{all}$ despite the fact that the class $\E_{all}$ does not necessarily form an exact structure in an additive category $\A$.
\end{remark}
\begin{definition}\label{quasi abelian}An additive category $\A$ is \emph{quasi-abelian} if it is \emph{pre-abelian} and  $\E_{all}=\E_{sta}$.\end{definition}
It is clear that an additive category $\A$ is quasi-abelian if it is pre-abelian and every pullback of a strict epimorphism is a strict epimorphism, and every pushout of a strict monomorphism is a strict monomorphism.

\begin{definition}An additive category $\A$ is \emph{abelian} if and only if it is \emph{pre-abelian} and $\E_{all}=\E_{str}$. 
\end{definition}
Hence abelian categories with their maximal exact structure $(\A, \E_{all})$ are the pre-abelian additive categories where every morphism is admissible.

It is well known that the class of all split short exact sequences forms an exact structure on every additive category, called the  minimal exact structure $\E_{min}$. Note that certain properties of the underlying additive category $\A$  determine which exact structures can exist on $\A$. See \cite[Section 2]{BHLR} for a summary on the minimal and maximal exact structures on any additive category.
  Moreover, under some finiteness conditions, the exact structures on $\A$ are parametrized by subsets of Auslander-Reiten sequences. 
This phenomenon was observed in \cite[Theorem 5.7]{BHLR}, and is based on \cite{E18}:
\begin{theorem}\label{cube of AR-sequences}
Let $\A$ be a skeletally small, Hom-finite, idempotent complete additive category which has finitely many indecomposable objects up to isomorphism. Then every exact structure $\E$ on $\A$ is uniquely determined by the set $\mathcal B$ of Auslander-Reiten sequences that are contained in $\E$. We write in this case $\E=\E(\mathcal B).$
\end{theorem}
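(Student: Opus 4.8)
The plan is to prove that the assignment $\E\mapsto\mathcal B(\E)$ is injective, where $\mathcal B(\E)$ is the set of Auslander--Reiten conflations of $(\A,\E)$ --- equivalently, of all Auslander--Reiten sequences lying in $\E$ (the two descriptions agree, since the deflation of an AR-conflation is minimal right almost split in $\A$, a property not involving $\E$). First I would record two standing facts: under the stated hypotheses $\A$ is Krull--Schmidt, hence weakly idempotent complete; and consequently any exact structure on $\A$ is closed under isomorphisms, finite direct sums of conflations, pushouts of conflations along arbitrary morphisms, pullbacks of conflations along arbitrary morphisms, the operation of composing the admissible epics of two conflations, and retracts of conflations, while also containing every split conflation. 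Granting these closure properties, injectivity reduces to the single claim that \emph{every conflation of $(\A,\E)$ lies in the smallest exact structure containing $\mathcal B(\E)$ and the split conflations}: indeed, if $\mathcal B(\E_1)=\mathcal B(\E_2)$ then, for $\{i,j\}=\{1,2\}$, every conflation of $\E_i$ lies in the structure generated by $\mathcal B(\E_i)\cup\E_{min}=\mathcal B(\E_j)\cup\E_{min}\subseteq\E_j$, whence $\E_1=\E_2$.

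To establish the claim I would combine a decomposition step with an induction. Fix a conflation $\eta\colon X\rightarrowtail Y\xrightarrow{d}Z$ in $\E$. Decomposing $Z=\bigoplus_k Z_k$ into indecomposables, $\eta$ is isomorphic to the pushout along the fold map $\bigoplus_k X\to X$ of $\bigoplus_k\eta_k$, where $\eta_k$ is the pullback of $\eta$ along the split monic $Z_k\hookrightarrow Z$; as all of these operations preserve any exact structure, it suffices to treat the $\eta_k$, so we may assume $Z$ is indecomposable. If $\eta$ splits we are done. Otherwise $d$ is not a split epic, so $Z$ is not $\E$-projective, and by Auslander--Reiten theory for exact categories --- available under our finiteness hypotheses --- there is an AR-conflation $\alpha\colon L\xrightarrow{i}M\xrightarrow{f}Z$ in $\E$, with $\alpha\in\mathcal B(\E)$. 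Since $f$ is right almost split in $\A$ and $d$ is not a split epic, $d$ factors as $d=fg$ for some $g\colon Y\to M$. A short diagram chase then identifies the pullback of $\eta$ along $f$ with a conflation $\beta\colon X\rightarrowtail Y\oplus L\xrightarrow{(g\mid i)}M$, and identifies the conflation obtained by composing the admissible epics $(g\mid i)$ and $f$ of $\beta$ and $\alpha$ with $\eta\oplus(L\xrightarrow{1}L\twoheadrightarrow 0)$. Thus $\eta$ is a retract of a conflation assembled from $\alpha\in\mathcal B(\E)$, a split conflation, and $\beta$ by structure-preserving operations, so it remains only to handle $\beta$.

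Finally one iterates: decompose the right-hand term $M$ of $\beta$ into indecomposable summands $M_j$, each receiving an irreducible morphism $M_j\to Z$, and repeat the previous step on the resulting conflations with indecomposable right-hand terms. The recursion therefore runs backwards along the arrows of the (finite) Auslander--Reiten quiver, and the crux is to show it is well-founded, so that after finitely many steps every conflation still to be treated splits. I expect this termination to be the main obstacle: the AR-quiver may contain oriented cycles (for instance over a self-injective Nakayama algebra), so one cannot merely use its depth, and must instead introduce a more refined well-founded measure on conflations, combining, say, a count of the ``non-split part'' with position in the AR-quiver. This is precisely where the hypotheses that $\A$ is Hom-finite, idempotent complete and of finite representation type become indispensable, and it is the technical core carried out in \cite{E18}; granting it, every conflation of $(\A,\E)$ is built from AR-conflations in $\mathcal B(\E)$ and split conflations by structure-preserving operations, which proves the claim and hence the theorem.
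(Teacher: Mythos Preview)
The paper does not give its own proof of this theorem: it appears in the background section as a citation of \cite[Theorem~5.7]{BHLR}, itself building on Enomoto's work. The only indication of the underlying mechanism occurs later, in the proof of Theorem~\ref{EB description}, where the paper invokes the correspondence between exact structures and closed subfunctors of $\Ext^1(-,-)$ from \cite{DRSS}, together with the fact that such a subfunctor is determined by its socle, which consists precisely of the Auslander--Reiten sequences it contains \cite{BBH}.

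Your argument is correct up to the point you flag. The reconstruction of $\eta$ from the pullbacks $\eta_k$ via direct sum and pushout along the fold map, the factorisation $d=fg$ through the minimal right almost split map, and the identification of the pullback $\beta$ with $X\rightarrowtail Y\oplus L\to M$ are all valid. The entire content is indeed the termination, and here your deferral to \cite{E18} is not quite on target: that paper shows that Auslander--Reiten sequences generate the \emph{relations in the Grothendieck group}, a strictly weaker conclusion than the one you need (that the conflation itself arises from AR-conflations by exact-structure-preserving operations). The termination can, however, be supplied directly: under the hypotheses, $F:=\Ext^1_\E(-,X)$ is a finite-length object of the functor category $\mod\A$ for each $X$, and an element $\xi\in F(Z)$ lies in the socle of $F$ precisely when $f^*\xi=0$ for the sink map $f$; more generally, if $\xi$ lies in the $n$-th socle layer then every component of $f^*\xi$ lies in the $(n{-}1)$-st. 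Thus your recursion strictly decreases the socle index and halts after at most the Loewy length of $F$ steps. Phrased this way, your explicit generation argument and the ``closed subfunctor determined by its socle'' argument of \cite{BHLR,BBH} are the same proof read from the two ends of the socle filtration; the latter formulation is shorter only because it packages the whole induction into the words ``finite length''.
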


\subsection{The poset of $\E-$subobjects}
Now let us also recall the following useful and well known notions:
\begin{definition}\label{lattice}
A poset $P$ is called a \emph{lattice} if for every pair of elements of $P$ there exists a supremum and an infimum. In other words, there exist two binary operations $\vee$ and $\wedge:$ $P\times P \rightarrow P$ satisfying the following axioms:
\begin{enumerate}
    \item  $\vee$ is associative and commutative,
    \item  $\wedge$ is associative and commutative,

    \item  $\wedge$ and $\vee$ satisfy the following property:
    \[m \vee (m \wedge n)=m=m\wedge (m\vee n)\;\mbox{ for all } m, n \in P.\]
\end{enumerate}

\end{definition}
\begin{remark}
As a consequence of the axioms above we have the following property for lattices: 
\[m\vee m = m \;\mbox{ and }\; m\wedge m = m \; \mbox{ for all } m \in P.
\]
\end{remark}
\medskip

\begin{definition}\label{modular}
A lattice $(P, \leq, \wedge, \vee)$ is \emph{modular}
if the following property is satisfied  for all $x_1, x_2, x_3 \in P$ with $x_1 \le x_2$:
\[x_2\wedge (x_1 \vee x_3)=x_1 \vee (x_2 \wedge x_3).\]
\end{definition}
 
\begin{definition}\cite[Definition 3.1]{BHLR} 
Let $A$ and $B$ be objects of an exact category $(\mathcal{A},\E)$. If there is an admissible monic $i: A \rightarrowtail B$ we say the pair $(A,i)$ is an {\em admissible subobject} or {\em $\mathcal{E}-$subobject of $B$}. Often we will refer to the pair $(A,i)$ by the object $A$ and write $A {\subset}_{\mathcal{E}} B $.
If $i$ is not an isomorphism, we use the notation  $A {\subsetneq}_{\mathcal{E}} B $ and if, in addition, $A \not \cong 0$ we  say that $(A,i)$ is a \emph{proper} admissible subobject of $B$.
\end{definition}

\begin{definition}\cite[Definition 3.3]{BHLR}\label{simple}
A non-zero object $S $ in $(\A,\E)$ is {\em $\mathcal{E}-$simple} if $S$ admits no $\E-$sub\-objects except $0$ and $S$, that is, whenever $ A \subset_\E S$, then $A$ is the zero object or isomorphic to  $S$.
\end{definition}

\begin{remark}\label{quotient}
Let $A$ be an $\mathcal{E}-$subobject of $B$ given by the monic $i: A \rightarrowtail B$.
We denote by $B{/}^{i}A$ (or simply $B/A$ when $i$ is clear from the context) the cokernel of $i$, thus we denote the corresponding admissible sequence as
\[\begin{tikzcd} A \arrow[r, tail, "i"] & B \arrow[r,  two heads] & B / A \end{tikzcd}\]
\end{remark}

\begin{remark}\label{zero coker}
An admissible monic $i: A \rightarrowtail B$ is proper precisely when its co\-kernel is non-zero. 
In fact, by uniqueness of kernels and cokernels, the exact sequence 
$$\begin{tikzcd} B \arrow[r, tail, "1_B"] & B \arrow[r,  two heads] & 0 \end{tikzcd}$$ 
is, up to isomorphism, the only one with zero cokernel. Thus an admissible monic $i$ has $\Coker(i) = 0$ precisely when $i$ is an isomorphism. Dually, an admissible epic $d: B \rightarrowtail C$ is an isomorphism precisely when $\Ker(d) = 0$. In particular a morphism
which is at the same time an admissible monic and epic is an isomorphism.
\end{remark}

\begin{definition}\cite[Section 6.1]{BHLR}
An object $X$ of $(\A, \E)$ is \emph{$\E-$Noetherian} if any increasing sequence of $\E-$subobjects of $X$ 

\[ \begin{tikzcd} X_1 \arrow[r, tail] &  X_2 \arrow[r, tail] &  \dots \arrow[r, tail] &  X_{n-1} \arrow[r, tail] & X_n \arrow[r, tail] & X_{n+1} \dots  \end{tikzcd}\]
becomes stationary.
Dually, an object $X$ of $(\A, \E)$ is \emph{$\E-$Artinian} if any descending sequence of $\E-$subobjects of $X$

\[\begin{tikzcd} \dots X_{n+1} \arrow[r, tail] &  X_n \arrow[r, tail] & X_{n-1} \arrow[r, tail] &  \dots \arrow[r, tail] & X_2 \arrow[r, tail] & X_{1}  \end{tikzcd} \] 
becomes stationary.
 An object $X$ which is both $\E-$Noetherian and $\E-$Artinian is called \emph{$\E-$finite}. 
The exact category $(\A, \E)$ is called \emph{$\E-$Artinian} 
(respectively \emph{$\E-$Noetherian}, \emph{$\E-$finite}) if every object is $\E-$Artinian (respectively $\E-$Noetherian, $\E-$finite).
\end{definition}
Now let us recall a  definition similar to \cite[Definition 2.1]{E19}:

\begin{definition}\label{poset of subobjects}
Two $\E$-subobjects $(\begin{tikzcd} Y_i \arrow[r, tail, "f_i"] & X\end{tikzcd})$ for $i = 0,1 $ are \textit{isomorphic $\E$-subobjects of $X$} if there exists an isomorphism $\phi \in \A(Y_0, Y_1)$ such that $f_0=f_1 \circ \phi$. 
 We denote by $\P^\E_X$  the set of isomorphism classes of  $\E$-subobjects of $X$. The relation \[(Y,f)\leq (Z,g) \Longleftrightarrow \exists \begin{tikzcd} Y \arrow[dr, tail, "f"] \arrow[r, tail, "\exists h"] & Z \arrow[d, tail, "g"] \\ & X\end{tikzcd}\]
turns  $(\P^\E_X, \leq)$ into a poset. Sometimes, to avoid clutter, we will drop the superscript $\E$ and write $\mathcal{P}_X$. By $\mathcal{S}_X$ we denote the set of isomorphism classes of proper $\E$-subobjects of $X$, thus $\P^\E_X = \mathcal{S}_X \cup \{0\} \cup \{X\}$ and $\mathcal{S}_X$ inherits a poset structure from $\mathcal{P}_X$.
 \end{definition}

\begin{remark}\label{remark poset P_X} 
An $\E$-subobject $(Y,f)$ of $X$ is a maximal element of $\P^\E_X$ if and only if $\Coker f$ is $\E$-simple. For a poset $(P, \leq)$, by $\Max(P)$ we denote the maximal elements of the poset. Thus $\Max(\mathcal{S}_X)$ is the class of \emph{maximal $\E$-subobjects} of $X$. 
% Note also that an object $X$ is $\E-$finite precisely when the length of all chains in the poset $\P^\E_X$ is bounded.
\end{remark}

%%%%%%%%%%%%%%%
%%%%%%%%%%%%%%%%%%%%%%%%%%%%%%%%%%%%
\section{General results}

We show an $\E-$version of the fourth isomorphism theorem. We also give some results describing the behaviour of admissible morphisms, which yields a new characterisation of abelian categories in Theorem \ref{abelian 1}. 

%%%%%%%%%%%%%%%%%%

%%%%%%%%%%%%%%%%%%%%%%%%%%%%%%
\subsection{Admissible morphisms and abelian categories} \label{admissibles and abelian}
In this subsection we show that the admissible morphisms in an exact category behave poorly, unless we work in an abelian category with the maximal exact structure. Let us first recall the following related results:
\begin{proposition}\cite[Lemma 3.5]{HR}({\bf{The $\E$-Schur lemma}})\label{schur} 
Let $\begin{tikzcd} X\arrow[r, "\circ" description, "f"] & Y \end{tikzcd}$
be an admissible non-zero morphism. Then the following hold:
\begin{enumerate}
\item[a)]if $X$ is $\E-$simple, then $f$ is an admissible monic,
\item[b)]if $Y$ is $\E-$simple, then $f$ is an admissible epic.
\end{enumerate}
\end{proposition}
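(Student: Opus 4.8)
The plan is to prove the $\E$-Schur lemma by unpacking the factorisation of an admissible morphism through its image and then applying the simplicity hypothesis together with Remark~\ref{zero coker}.

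\textbf{Setup.} Since $f\colon X\to Y$ is admissible, by Definition~\ref{ad mor} it factors as $f = m e$ with $e\colon X \twoheadrightarrow I$ an admissible epic and $m\colon I \rightarrowtail Y$ an admissible monic, where $I$ plays the role of the image. First I would observe that $f\neq 0$ forces $I\not\cong 0$: if $I=0$ then $f=me$ factors through the zero object and is zero, contradiction. So in both parts we have a nonzero object $I$ sitting between $X$ and $Y$ via an admissible epic and an admissible monic respectively.

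\textbf{Part a).} Assume $X$ is $\E$-simple. Consider the admissible epic $e\colon X\twoheadrightarrow I$ with kernel $(K,k)$, so $k\colon K\rightarrowtail X$ is an admissible monic, i.e.\ $K\subset_\E X$. Since $X$ is $\E$-simple, $K$ is either $0$ or isomorphic to $X$. If $K\cong X$, then $e$ has kernel isomorphic to $X$, which (using that $e$ is the cokernel of $k$ and uniqueness of cokernels, cf.\ Remark~\ref{zero coker}) forces $I\cong 0$, contradicting $I\not\cong 0$. Hence $K=0$, so $e$ is an admissible monic; being simultaneously an admissible monic and an admissible epic, $e$ is an isomorphism by Remark~\ref{zero coker}. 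Therefore $f = m e$ is a composition of an admissible monic with an isomorphism, hence an admissible monic (the class of admissible monics is closed under isomorphism, or one applies (A1) with $e$ an admissible monic).

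\textbf{Part b).} This is dual: assume $Y$ is $\E$-simple and look at the admissible monic $m\colon I\rightarrowtail Y$, so $I\subset_\E Y$. Simplicity of $Y$ gives $I=0$ (excluded, since $I\not\cong 0$) or $I\cong Y$; in the latter case $m$ has zero cokernel, so by Remark~\ref{zero coker} $m$ is an isomorphism. Then $f = m e$ is an isomorphism composed with an admissible epic, hence an admissible epic. I do not expect a serious obstacle here; the only point requiring care is ruling out the degenerate case $I\cong X$ (resp.\ $I\cong Y$) cleanly, which is exactly where Remark~\ref{zero coker}'s characterisation of isomorphisms as the admissible monics/epics with vanishing cokernel/kernel does the work. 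One could alternatively invoke Proposition~\ref{obscure axiom} to streamline the conclusion that $e$ (resp.\ $m$) is the relevant kind of admissible morphism, but the direct argument via the kernel-cokernel pair is cleanest.
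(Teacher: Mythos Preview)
Your argument is correct: factor $f=me$ via Definition~\ref{ad mor}, rule out $I\cong 0$ since $f\neq 0$, and then use $\E$-simplicity to force the kernel of $e$ (respectively the cokernel of $m$) to vanish, so that $e$ (respectively $m$) is an isomorphism by Remark~\ref{zero coker}. The only phrasing to tighten is in part a): when you write ``$K\cong X$'', what $\E$-simplicity actually gives you is that the admissible monic $k\colon K\rightarrowtail X$ is an isomorphism (Definition~\ref{simple} says the only $\E$-subobjects of $X$ are $0$ and $X$ itself, meaning the inclusion is either zero or an isomorphism of subobjects). From that, $I=\Coker(k)=0$ is immediate. Your invocation of Remark~\ref{zero coker} is the right tool, but the passage through ``uniqueness of cokernels'' is unnecessary once you state that $k$ is an isomorphism.

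Note that the paper does not supply its own proof of this statement; it is quoted from \cite[Lemma 3.5]{HR}. The argument there is exactly the one you give: factor, observe the image is nonzero, and apply simplicity to the relevant end to conclude that one of the two factors is an isomorphism. So your proof matches the cited source.
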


\begin{cor}\cite[Corollary 3.6]{HR}\label{ad endos} 
 Let $S$ be an $\E-$simple object, then the non-zero admissible endomorphisms $\begin{tikzcd} S\arrow[r, "\circ" description, "f"] & S \end{tikzcd}$ form the group Aut$(S)$ of automorphisms of $S$.
\end{cor}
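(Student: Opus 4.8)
The statement to prove is Corollary \ref{ad endos}: for an $\E$-simple object $S$, the non-zero admissible endomorphisms $S \to S$ form the group $\mathrm{Aut}(S)$. My plan is to extract this directly from the $\E$-Schur Lemma (Proposition \ref{schur}) together with the basic fact recorded in Remark \ref{zero coker} that a morphism which is simultaneously an admissible monic and an admissible epic is an isomorphism. Concretely, I would first show that every non-zero admissible endomorphism of $S$ is an automorphism, and then check that the set of such endomorphisms is closed under composition and contains the identity, so that it is indeed a subgroup (in fact all) of $\mathrm{Aut}(S)$.

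**The main argument.** Let $f\colon S \to S$ be a non-zero admissible morphism. Since $S$ is $\E$-simple, part (a) of Proposition \ref{schur} applies with $X = S$, so $f$ is an admissible monic; and part (b) applies with $Y = S$, so $f$ is also an admissible epic. By Remark \ref{zero coker}, a morphism that is at once an admissible monic and an admissible epic is an isomorphism. Hence $f \in \mathrm{Aut}(S)$. Conversely, any automorphism of $S$ is in particular an admissible monic with cokernel $0$ (and an admissible epic with kernel $0$), hence admissible, and it is non-zero since $S \neq 0$. This shows the set of non-zero admissible endomorphisms of $S$ coincides with $\mathrm{Aut}(S)$ as a set.

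**Group structure.** It remains to observe that this set is closed under the operations making $\mathrm{Aut}(S)$ a group. The identity $1_S$ is a non-zero admissible endomorphism by (A0) and (A0)$^{op}$. If $f, g$ are non-zero admissible endomorphisms, then by the above both are isomorphisms, so $g \circ f$ is an isomorphism, hence non-zero and admissible (an isomorphism factors trivially as an admissible epic followed by an admissible monic). Similarly $f^{-1}$ is an isomorphism, hence a non-zero admissible endomorphism. Therefore the non-zero admissible endomorphisms of $S$ form exactly the group $\mathrm{Aut}(S)$.

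**Expected obstacle.** There is essentially no obstacle here: the corollary is an immediate packaging of the $\E$-Schur Lemma with Remark \ref{zero coker}. The only point requiring a moment's care is the closure claims for the group structure — in particular, noting that composites and inverses of the relevant isomorphisms remain \emph{admissible} morphisms (which is automatic, since any isomorphism is admissible) and remain \emph{non-zero} (which uses $S \neq 0$). Once these trivialities are dispatched, the proof is complete.
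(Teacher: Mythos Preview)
Your proof is correct and is precisely the natural argument one expects: apply both parts of the $\E$-Schur lemma (Proposition \ref{schur}) to get that $f$ is simultaneously an admissible monic and an admissible epic, then invoke Remark \ref{zero coker}. The paper itself does not give a proof of this corollary---it is quoted from \cite[Corollary 3.6]{HR}---but your argument is exactly the intended one and matches how the result is derived there.
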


\begin{remark}
The classical Schur lemma on abelian categories states that the endomorphism ring of a simple object is a division ring. We show in Corollary \ref{ad endos} that any non-zero admissible endomorphism of an $\E-$simple object is invertible, but it is not true in general that the set of admissible endomorphisms forms a ring.
 In fact, the composition of admissible morphisms need not be admissible, (see \cite[Remark 8.3]{Bu}), nor is it true for sums of admissible morphisms, as we discuss in this section. 
\end{remark}
 The following fact will our main tool:
\begin{lemma}\cite[Proposition 3.1]{Freyd}\label{F66}
Suppose that every morphism in $\A$ is admissible, then $\A$ is abelian and $\E = \E_{max}= \E_{all}$.
\end{lemma}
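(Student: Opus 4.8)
The plan is to deduce first that $\A$ is pre-abelian, then to upgrade every monomorphism to an admissible monic (and dually every epimorphism to an admissible epic), and finally to read off that $\A$ is abelian with $\E=\E_{all}=\E_{max}$. For pre-abelianness, given any $f\colon A\to B$ write $f=m\circ e$ with $e\colon A\twoheadrightarrow I$ an admissible epic and $m\colon I\rightarrowtail B$ an admissible monic; since $m$ is a monomorphism, $fg=0\iff meg=0\iff eg=0$ for every $g$, so $\Ker f=\Ker e$, which exists because $e$ occurs in an $\E$-sequence; dually, since $e$ is an epimorphism, $\Coker f=\Coker m$ exists. Hence $\A$ has all kernels and cokernels.

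Next, suppose the factorisation $f=m\circ e$ is that of a \emph{monomorphism}. If $eg=0$ then $fg=meg=0$, so $g=0$; thus $\Ker e=0$, and by Remark \ref{zero coker} the admissible epic $e$ is an isomorphism. Consequently the kernel--cokernel pair with left-hand map $f$ is isomorphic, via $e$ on its left-hand term, to the $\E$-sequence $I\rightarrowtail B\twoheadrightarrow B/I$, so it lies in $\E$ and $f$ is an admissible monic. The dual argument (factor a given epimorphism and show its monic factor is an isomorphism) shows every epimorphism of $\A$ is an admissible epic.

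It remains to identify the structures. Every admissible monic $i$ is monic with $i=\Ker(\Coker i)$, so its coimage is its domain and its image is $i$ itself; hence the canonical comparison map $\overline{i}$ is an isomorphism and $i$ is strict. Combining with the previous paragraph, every monomorphism of $\A$ is strict, hence every short exact sequence in $\A$ is strict; since $\E_{str}\subseteq\E_{all}$ always, this gives $\E_{all}=\E_{str}$, so $\A$ is abelian by definition. Moreover any $(i,d)\in\E_{all}$ has $i$ monic, hence an admissible monic, and then $d=\Coker i$ is forced; thus $(i,d)\in\E$, and with the trivial reverse inclusion $\E=\E_{all}$. Finally $\E_{max}=\E_{str}$ holds in the pre-abelian category $\A$, so $\E=\E_{max}=\E_{all}$.

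I expect the argument to be essentially routine once the factorisation into an admissible epic followed by an admissible monic is in hand; the only step requiring genuine care is showing that the epic factor $e$ of a monomorphism is an isomorphism --- this is exactly where Remark \ref{zero coker} is used --- together with the minor bookkeeping needed to match the paper's definition of an abelian category (the equality $\E_{all}=\E_{str}$) rather than appealing to an external characterisation.
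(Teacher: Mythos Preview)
Your argument is correct. The paper does not give its own proof of this lemma---it is stated with only the citation \cite[Proposition 3.1]{Freyd}---so there is no in-paper argument to compare against beyond confirming that your reasoning is sound. The three steps (pre-abelianness from the admissible factorisation, upgrading monomorphisms to admissible monics via Remark~\ref{zero coker}, and reading off $\E=\E_{all}$) constitute the standard route to Freyd's result.

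One minor simplification: in your final paragraph you route through the paper's $\E_{all}=\E_{str}$ characterisation of abelian, but this detour is unnecessary. Step~2 already shows that every monomorphism is a kernel (being an admissible monic) and dually that every epimorphism is a cokernel, which is the classical definition of abelian; the paper's own remark immediately following its definition of abelian (``the pre-abelian additive categories where every morphism is admissible'') then closes the argument directly without the $\E_{str}$ bookkeeping.
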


\begin{lemma}\label{composition}
Suppose that the class of admissible morphisms in $\A$ is closed under composition. Then $\A$ is abelian and $\E = \E_{max}= \E_{all}$.
\end{lemma}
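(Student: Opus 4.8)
The plan is to reduce Lemma \ref{composition} to Lemma \ref{F66} by showing that if the admissible morphisms are closed under composition, then in fact \emph{every} morphism in $\A$ is admissible. So let $f\colon A\to B$ be an arbitrary morphism; I want to exhibit it as a composite of admissible morphisms (or directly factor it as an admissible monic following an admissible epic).

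First I would use the standard trick of writing $f$ through its graph. Consider the morphism $\binom{1_A}{f}\colon A\to A\oplus B$; this is a section of the projection $A\oplus B\twoheadrightarrow A$, hence a split monic, and split monics and split epics are always admissible (they belong to $\E_{min}\subseteq\E$). Dually, $(f,\,{-1_B})\colon A\oplus B\to B$, or rather the projection-type map $A\oplus B\twoheadrightarrow B$, is a split epic, hence admissible. Now observe that $f$ factors as
\[
\begin{tikzcd} A \arrow[r, "{\binom{1_A}{f}}"] & A\oplus B \arrow[r, "{(0,\,1_B)}"] & B, \end{tikzcd}
\]
i.e.\ $f = \mathrm{pr}_B\circ \iota_{\Gamma_f}$ where the first map is an admissible (split) monic and the second is an admissible (split) epic. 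In particular $f$ is a composite of two admissible morphisms, so by the closure hypothesis $f$ is itself admissible. Since $f$ was arbitrary, every morphism in $\A$ is admissible, and Lemma \ref{F66} gives that $\A$ is abelian with $\E=\E_{max}=\E_{all}$.

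The step I expect to require the most care is verifying that both maps in the graph factorisation genuinely lie in the admissible class: the first is the canonical section into the biproduct and the second the canonical retraction, and one needs the (easy, well-known) fact that every split short exact sequence is admissible in any exact structure, so that split monics are admissible monics and split epics are admissible epics. With that in hand the factorisation is immediate and the rest is a one-line appeal to Lemma \ref{F66}. (Alternatively, if one prefers to avoid invoking $\E_{min}\subseteq\E$ explicitly, one can note that a split monic $s$ with retraction $r$ satisfies: $s$ has a cokernel and $r\circ s=1$ is an admissible monic, so by Proposition \ref{obscure axiom} $s$ is an admissible monic; dually for split epics.)
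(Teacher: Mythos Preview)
Your proposal is correct and follows essentially the same argument as the paper: both factor an arbitrary morphism $f\colon A\to B$ as the split monic $\binom{1_A}{f}\colon A\to A\oplus B$ followed by the split epic $(0,\,1_B)\colon A\oplus B\to B$, observe that sections and retractions are always admissible, and then invoke Lemma~\ref{F66}. The only cosmetic difference is that the paper spells out the two split short exact sequences explicitly before drawing the commutative triangle, while you invoke $\E_{min}\subseteq\E$ (or Proposition~\ref{obscure axiom}) directly.
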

\begin{proof}
We show that every morphism can be written as the composition of a section followed by a retraction. Whence the claim will follow from Lemma \ref{F66} since sections and retractions are always admissible morphisms, since the split exact structure is the minimal exact structure on any additive category. To this end, let $f: X \to Y $ be an arbitrary morphism in $\mathcal{A}$ and consider the two split short exact sequences 
\[ \begin{tikzcd} X \arrow[r, tail,"{ \left[ \begin{smallmatrix} 1 \\ 0 \end{smallmatrix} \right]}"] & X \oplus Y \arrow[r, two heads,"{\left[ \begin{smallmatrix}  0 \; 1 \end{smallmatrix}\right] }"] & Y  \\ X \arrow[r, tail,"{\left[ \begin{smallmatrix} 1 \\ f \end{smallmatrix} \right]}"] & X \oplus Y \arrow[r, two heads,"{\left[ \begin{smallmatrix}  -f \; 1 \end{smallmatrix} \right]}"] & Y. \end{tikzcd} \]
Then there is a commutative diagram 
\[ \begin{tikzcd} X \arrow[rr, "f"] \arrow[dr, tail, "{\left[ \begin{smallmatrix} 1 \\ f \end{smallmatrix} \right]}"']  & & Y \\ & X \oplus Y \arrow[ur, two heads, "{\left[ \begin{smallmatrix}  0 \; 1 \end{smallmatrix} \right]}"'] & \end{tikzcd} \] which proves the claim.
\end{proof}
\begin{lemma}\label{addition}
Suppose that the class of admissible morphisms in $\A$ is closed under addition. Then $\A$ is abelian and $\E = \E_{max} = \E_{all}$.
\end{lemma}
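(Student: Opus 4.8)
The plan is to show that the stated hypothesis forces \emph{every} morphism of $\A$ to be admissible, and then to conclude by Lemma \ref{F66}. The difficulty is that an arbitrary morphism $f\colon X\to Y$ cannot be written directly as a sum of manifestly admissible morphisms (split monics, split epics, isomorphisms); instead I would produce from $f$ a specific admissible morphism between biproducts from which $f$ can be recovered.

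First I would record two easy facts that hold independently of the hypothesis: since $\E_{min}\subseteq\E$, every split monic and every split epic is an admissible morphism; and the class of admissible morphisms is always closed under negation (negating the deflation in a kernel--cokernel pair yields an isomorphic pair, hence $-e$ is an admissible epic whenever $e$ is, and so $-me=m(-e)$ is admissible whenever $me$ is). Together with the hypothesis this gives that any $\bZ$-linear combination of admissible morphisms is admissible. Applying this to the sections $\bsm 1\\ f\esm$ and $\bsm 1\\ 0\esm$ of $X\to X\oplus Y$ shows that $\iota_2\circ f=\bsm 0\\ f\esm$ is admissible; applying it to the retractions $\bsm f&1\esm$ and $\bsm 0&1\esm$ of $X\oplus Y\to Y$ shows that $f\circ\pi_1=\bsm f&0\esm$ is admissible. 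Since an admissible morphism has both a kernel and a cokernel, and $\iota_2$ is monic while $\pi_1$ is epic, this already yields $\Ker f=\Ker(\iota_2 f)$ and $\Coker f=\Coker(f\pi_1)$, so $\A$ is pre-abelian.

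The heart of the argument is then the recovery step. Write the admissible morphism $g:=f\pi_1\colon X\oplus Y\to Y$ as $g=me$ with $e\colon X\oplus Y\twoheadrightarrow I$ an admissible epic and $m\colon I\rightarrowtail Y$ an admissible monic. From $g\iota_2=0$ and $m$ monic we get $e\iota_2=0$, so $e$ factors through $\Coker(\iota_2)=\pi_1$, say $e=\bar e\,\pi_1$ with $\bar e\colon X\to I$; cancelling the epimorphism $\pi_1$ in $m\bar e\,\pi_1=g=f\pi_1$ gives $f=m\bar e$. It remains to promote $\bar e$ to an admissible epic: it has a kernel because $\A$ is pre-abelian, and $\bar e\circ\pi_1=e$ is an admissible epic, so the dual of Proposition \ref{obscure axiom} applies and shows $\bar e$ is an admissible epic. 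Hence $f=m\bar e$ exhibits $f$ as an admissible morphism. Since $f$ was arbitrary, Lemma \ref{F66} yields that $\A$ is abelian and $\E=\E_{max}=\E_{all}$.

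I expect the recovery step to be the main obstacle: the key realisation is that $f$ can be extracted from the shear-type block map $\bsm f&0\esm$, and, above all, that the resulting left factor $\bar e$ is genuinely an \emph{admissible} epic rather than a mere morphism --- which is precisely where pre-abelianness (to guarantee $\Ker\bar e$ exists) and Proposition \ref{obscure axiom} enter. A secondary point to verify carefully is the claim that differences of sections/retractions are admissible, i.e. that negation of an admissible morphism is admissible, which uses only that isomorphisms are admissible epics.
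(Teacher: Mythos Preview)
Your proof is correct and takes essentially the dual route to the paper's. The paper works with $\iota_2 f=\bsm 0\\f\esm$, written as the sum $\bsm 1\\f\esm+\bsm -1\\0\esm$ of two sections, factors it as an admissible epic $g$ followed by an admissible monic $\bsm 0\\h\esm$, and then shows via a direct pullback computation that $h$ itself is an admissible monic, whence $f=hg$. You instead work with the epic-side block $f\pi_1=\bsm f&0\esm$, factor it as $me$, peel off $\pi_1$ to obtain $e=\bar e\,\pi_1$, and invoke the dual of Proposition~\ref{obscure axiom} to promote $\bar e$ to an admissible epic. The one genuine addition in your argument is the preliminary step establishing pre-abelianness: this is precisely what supplies the kernel of $\bar e$ needed for the obscure axiom, and it makes the recovery step clean and self-contained. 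The paper's alternative is an explicit pullback of the short exact sequence $Z\rightarrowtail X\oplus Y\twoheadrightarrow X\oplus\Coker h$ along $\bsm 0\\1\esm$ to identify the top row with $Z\overset{h}{\rightarrowtail} Y\twoheadrightarrow\Coker h$. Both approaches rest on the same key observation you isolate: $f$ can be recovered from a shear-type block map that is manifestly a difference of split maps.
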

\begin{proof}
Let $f: X \to Y $ be a morphism in $\mathcal{A}$. Then 
\[ {\left[ \begin{smallmatrix} 0 \\ f \end{smallmatrix} \right]} = {\left[ \begin{smallmatrix} 1 \\ f \end{smallmatrix} \right]} + {\left[ \begin{smallmatrix} -1 \\ 0 \end{smallmatrix} \right]}: X \to X \oplus Y \] is the sum of two sections and is hence admissible by assumption. Let 
\[ \begin{tikzcd} X \arrow[rr, "{\left[ \begin{smallmatrix} 0 \\ f \end{smallmatrix} \right]}"] \arrow[dr, two heads, "{g}"']  & & X \oplus Y  \\ & Z \arrow[ur, tail, "{\left[ \begin{smallmatrix}  h' \\ h\; \end{smallmatrix} \right]}"'] & \end{tikzcd} \] be a factorisation of ${\left[ \begin{smallmatrix} 0 \\ f \end{smallmatrix} \right]}$ into an admissible epic followed by an admissible monic. Observe that, as $g$ is epic, $h' =0$. We claim that if ${\left[ \begin{smallmatrix} 0 \\ h \end{smallmatrix} \right]}$ is an admissible monic then so is $h$, whence $f = hg$ and is therefore admissible from which the statement will follow from Lemma \ref{F66}. Observe that $\Coker{\left[ \begin{smallmatrix} 0 \\ h \end{smallmatrix} \right]} \cong X \oplus \Coker h$ and consider the pullback of short exact sequences
\[ \begin{tikzcd}[sep=large] Z \arrow[r, tail, "\alpha"] \arrow[d, equal] & P \arrow[r, two heads, "\beta"] \arrow[d, "{\left[ \begin{smallmatrix} \gamma\; \\ \gamma' \end{smallmatrix} \right]}"] & \Coker h \arrow[d, "{\left[ \begin{smallmatrix} 0 \\ 1 \end{smallmatrix} \right]}"]
 \\ Z \arrow[r, tail, "{\left[ \begin{smallmatrix} 0 \\ h \end{smallmatrix} \right]}"'] & X \oplus Y \arrow[r, two heads, "{\left[ \begin{smallmatrix} 1 \;\quad 0  \\ 0 \; \coker h \end{smallmatrix} \right]}"'] &  X \oplus \Coker h. \arrow[ul, "\lrcorner" near start, phantom] \end{tikzcd} \] It is straightforward to verify that $P \cong Y$, $\beta = \coker h$, ${\left[ \begin{smallmatrix} \gamma \\ \gamma' \end{smallmatrix} \right]} = {\left[ \begin{smallmatrix} 0 \\ 1 \end{smallmatrix} \right]}$ and $\alpha = h$. Thus we are done.
\end{proof}
This shows that, in general, the set of admissible endomorphisms $\End^{ad}_\mathcal{A}(X)$ 
 is not a subring of $\End_\mathcal{A}(X)$ under the usual addition and composition, also that $\Hom^{ad}(X,Y)$ is not a group under the usual addition.  To finish, we summarise the results of this subsection.
 \begin{theorem}\label{abelian 1}The following conditions are equivalent:
 \begin{enumerate}
     \item[a)]
     $\A$ is an abelian category,
     \item[b)] $\Hom(\A)=\Hom^{ad}(\A)$,
     \item[{c)}] $\Hom^{ad}(\A)$ is closed under composition,
     \item[d)] $\Hom^{ad}(\A)$ is closed under addition.
 \end{enumerate}
 \end{theorem}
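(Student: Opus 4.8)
The plan is to prove the chain of implications (a)$\Rightarrow$(b)$\Rightarrow$(c)$\Rightarrow$(a) together with (b)$\Rightarrow$(d)$\Rightarrow$(a), so that all four statements become equivalent; the substance is carried by Lemmas~\ref{composition} and~\ref{addition}, which I treat as black boxes, and everything else is soft. I would first note that each of (b), (c), (d) already forces $\E=\E_{max}=\E_{all}$ (by Lemmas~\ref{F66},~\ref{composition} and~\ref{addition}, respectively), so in (a) one should read ``$\A$ abelian'' as ``$\A$ abelian with its canonical exact structure $\E_{all}$''; under this reading, as observed right after the definition of an abelian category, abelian categories are exactly the pre-abelian categories in which every morphism is admissible, which is precisely what (b) asserts.

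For (a)$\Rightarrow$(b) I would spell out this last point: given an arbitrary $f\colon X\to Y$ in an abelian category, its canonical factorisation is $X\xrightarrow{\,e\,}\Im f\xrightarrow{\,m\,}Y$ with $e=\Coker(\Ker f\hookrightarrow X)$ and $m=\Ker(Y\twoheadrightarrow\Coker f)$; since $\Ker f\rightarrowtail X\twoheadrightarrow\Im f$ and $\Im f\rightarrowtail Y\twoheadrightarrow\Coker f$ are short exact sequences, hence lie in $\E_{all}$, the map $e$ is an admissible epic and $m$ an admissible monic, so $f=me\in\Hom^{ad}(\A)$; the reverse inclusion $\Hom^{ad}(\A)\subseteq\Hom(\A)$ is trivial, giving $\Hom(\A)=\Hom^{ad}(\A)$. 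The implications (b)$\Rightarrow$(c) and (b)$\Rightarrow$(d) are immediate, because once every morphism is admissible, in particular every composite and every sum of admissible morphisms is a morphism and hence admissible.

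It then remains only to cite the two main lemmas: (c)$\Rightarrow$(a) is exactly Lemma~\ref{composition} and (d)$\Rightarrow$(a) is exactly Lemma~\ref{addition} (alternatively, (b)$\Rightarrow$(a) is Freyd's Lemma~\ref{F66}). Assembling, (a)$\Rightarrow$(b)$\Rightarrow$(c)$\Rightarrow$(a) gives (a)$\Leftrightarrow$(b)$\Leftrightarrow$(c), and (b)$\Rightarrow$(d)$\Rightarrow$(a)$\Rightarrow$(b) gives (b)$\Leftrightarrow$(d), completing the proof. I do not expect a genuine obstacle here: the hard work is entirely contained in Lemmas~\ref{composition} and~\ref{addition}; the only points that need a careful word are the (benign) identification of ``abelian'' with ``abelian equipped with $\E_{max}$'' in condition (a), justified because conditions (b)--(d) each determine the exact structure, and the routine verification in (a)$\Rightarrow$(b) that the two factors of the image factorisation sit inside $\E$-sequences.
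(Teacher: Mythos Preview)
Your proposal is correct and follows essentially the same approach as the paper: both show (a)$\Rightarrow$(b),(c),(d) by noting that in an abelian category every morphism is admissible (you spell out the image factorisation more explicitly), and both obtain the converses directly from Lemmas~\ref{F66}, \ref{composition} and~\ref{addition}. Your additional remark that conditions (b)--(d) force $\E=\E_{all}$, so that (a) should be read with this exact structure, is a useful clarification left implicit in the paper.
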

 \begin{proof}
 We know that in an abelian category $\A$ every morphism is admissible so $\Hom(\A)=\Hom^{ad}(\A)$ and it is closed under the composition and the addition of the category $\A$.\\
 The converse is follows from  Lemmas \ref{F66},
 \ref{composition} and
 \ref{addition}.
 \end{proof}
 %%%%%%%%%%%%%%%%%%%%%%%%%%%%%%%%%%%%%%
 \subsection{Isomorphism theorem}
We give a generalisation of the fourth isomorphism theorem for modules to exact categories: 

\begin{proposition}({\bf{The fourth $\E$-isomorphism theorem}})\label{4 iso}
Let $(\mathcal{A}, \E)$ be an exact category and let
\[ \begin{tikzcd} X' \arrow[r, tail] & X \arrow[r, two heads] & X / X' \end{tikzcd} \] be a short exact sequence in $\E$. Then there is an isomorphism of posets 
\begin{align*}
    \{M \in \mathcal{A} \mid X' \rightarrowtail M \rightarrowtail X \} & \longleftrightarrow \{ N \in \mathcal{A} \mid N \rightarrowtail X / X' \} = \P^\E_{X / X'}  \\ M & \longmapsto M / X'.
\end{align*} 
\end{proposition}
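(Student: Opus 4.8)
The plan is to construct mutually inverse, order-preserving maps between the two posets, where the left-hand set is ordered as the up-set of $X'$ inside $\P^\E_X$. Write the fixed sequence as $\iota\colon X'\rightarrowtail X$, $q\colon X\twoheadrightarrow X/X'$. Send $M$ — given together with its admissible monics $X'\xrightarrow{h}M\xrightarrow{g}X$ with $gh=\iota$ — to $\Phi(M):=M/X'=\Coker h$. That this lands in $\P^\E_{X/X'}$, i.e.\ that the induced map $n\colon M/X'\to X/X'$ (the unique one with $n\bar h=qg$, where $\bar h=\coker h$) is an admissible monic with $\Coker n\cong X/M$, is the $\E$-version of the third isomorphism theorem / Noether's lemma, which follows from the $3\times 3$-lemma in any exact category \cite{Bu}; moreover the square
\[
\begin{tikzcd}
M \arrow[r, two heads, "\bar h"] \arrow[d, "g"'] & M/X' \arrow[d, "n", tail] \\
X \arrow[r, two heads, "q"'] & X/X'
\end{tikzcd}
\]
is then bicartesian. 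Applying this same statement to a chain $X'\rightarrowtail M_1\rightarrowtail M_2$ shows $M_1\le M_2\Rightarrow \Phi(M_1)\le\Phi(M_2)$, so $\Phi$ is order-preserving.

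Conversely, given an admissible monic $n\colon N\rightarrowtail X/X'$, form the pullback of $q$ along $n$; by axiom (A2)$^{\mathrm{op}}$ it exists and produces an admissible epic $p\colon M\twoheadrightarrow N$ together with $j\colon M\to X$ satisfying $qj=np$, and we set $\Psi(N):=M$. Two points must be verified. First, $\Ker p$ is canonically $\Ker q=X'$ and the composite $X'\rightarrowtail M\xrightarrow{j}X$ equals $\iota$; hence $X'\rightarrowtail M\xrightarrow{p}N$ is an admissible sequence, so $M/X'\cong N$ and $\Psi(N)$ is a genuine element of the left-hand set. Second — and this is the crux, since pullbacks of admissible monics need not exist in a general exact category — the map $j$ is an admissible monic. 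I would see this by identifying $j$ with the kernel of the admissible epic $X\xrightarrow{q}X/X'\xrightarrow{\coker n}(X/X')/N$ (a composite of admissible epics by (A1)$^{\mathrm{op}}$), the required universal property being a short diagram chase off the pullback; this also gives $X/M\cong(X/X')/N$. The same mechanism yields order-preservation of $\Psi$: if $N_1\rightarrowtail N_2$ over $X/X'$, then by pasting pullbacks $M_1\cong M_2\times_{N_2}N_1$, so $M_1\to M_2$ is the pullback of the admissible monic $N_1\rightarrowtail N_2$ along the admissible epic $p_2\colon M_2\twoheadrightarrow N_2$, hence again the kernel of a composite of admissible epics and therefore an admissible monic, compatibly over $X$.

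It remains to check the two round trips. $\Phi\circ\Psi=\mathrm{id}$ is immediate from the identification $M/X'\cong N$ above, together with uniqueness of the admissible monic $M/X'\to X/X'$. For $\Psi\circ\Phi=\mathrm{id}$, the bicartesian square displayed above exhibits $M$ as exactly the pullback of $q$ along $n=(M/X'\rightarrowtail X/X')$, so reconstructing that pullback returns $(M,g)$. Hence $\Phi$ is a bijection which, together with its inverse $\Psi$, is order-preserving, i.e.\ an isomorphism of posets. I expect the main obstacle to be the second point in the middle paragraph: proving that the side of the pullback of an admissible epic is an admissible monic. The kernel-of-a-composite-of-admissible-epics argument above is the cleanest route; alternatively one can invoke the obscure axiom (Proposition~\ref{obscure axiom}) once one has produced a cokernel for $j$, and this same circle of ideas also powers the compatibility of the two constructions.
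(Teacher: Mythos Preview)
Your proof is correct and follows essentially the same approach as the paper: the forward map is $M\mapsto M/X'$ and the inverse is built by pulling back $q$ along $N\rightarrowtail X/X'$. The only differences are cosmetic: where you argue directly that $j$ is the kernel of a composite of admissible epics, the paper invokes \cite[Proposition~2.15]{Bu}; and where you use the bicartesian square to recover $M$, the paper appeals to the Five Lemma.
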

\begin{proof}Let us begin by showing that the correspondence is bijective.
First we note that the map $M \mapsto M / X'$ is well-defined by \cite[Lemma 3.5]{Bu}. Next, we define an inverse map $\phi$.  For $N \rightarrowtail X/X' $ define $\phi (N)$ to be the pullback 
\[ \begin{tikzcd} X' \arrow[d, equal] \arrow[r, tail] & \phi(N) \arrow[d, "\alpha"] \arrow[r, two heads] \arrow[dr, phantom, "\lrcorner", near end] & N \arrow[d, tail] \\ X' \arrow[r, tail]  & X \arrow[r, two heads] & X / X'. \end{tikzcd} \] We observe that by \cite[Proposition 2.15]{Bu}, $\alpha$ is an admissible monic and thus $\phi$ is a well-defined map. 
We now show that the maps are mutually inverse. For $X' \rightarrowtail M \rightarrowtail X$, we apply $\phi$ by taking the pull-back and by \cite[Proposition 2.12, 2.13]{Bu} we obtain the identity on the left of the diagarm and then the fact that $\phi(M / X') \cong M$ follows from applying the Five Lemma for exact categories \cite[Corollary 3.2]{Bu} to the diagram 
\[ \begin{tikzcd} X' \arrow[r, tail] \arrow[d, equal] & \phi( M / X') \arrow[d] \arrow[r, two heads] & M / X' \arrow[d, equal] \\ X' \arrow[r, tail] & M \arrow[r, two heads] & M / X'. \end{tikzcd} \] For $N \rightarrowtail X/ X'$, there is a short exact sequence 
\[ \begin{tikzcd} X' \arrow[r, tail] & \phi(N) \arrow[r, two heads] & N. \end{tikzcd} \] Thus, $\phi(N) / X' \cong N$ and we are done. 

Now we show that this is an isomorphism of posets. First we show that if $X' \rightarrowtail M' \rightarrowtail M \rightarrowtail X$ then ${M' / X' \rightarrowtail M / X'}$. This follows from applying \cite[Lemma 3.5]{Bu} to the diagram 
\[\begin{tikzcd} X' \arrow[d, equal] \arrow[r, tail] & M' \arrow[d, tail] \arrow[r, two heads] \arrow[dr, phantom, "\lrcorner", near end] & M' / X' \arrow[d] \\ X' \arrow[r, tail]  & M \arrow[r, two heads] & M / X'.  \end{tikzcd} \]
Finally, we show the converse, that is if $M' / X' \rightarrowtail M / X' \rightarrowtail X/ X'$ then $M' \rightarrowtail M$. From earlier in the proof, there is a commutative diagram \[ \begin{tikzcd}  M' \arrow[d, "\alpha"'] \arrow[r, tail] & M' / X' \arrow[d, tail] \\
  M \arrow[d, tail] \arrow[r, two heads] \arrow[dr, phantom, "\lrcorner", near end] & M / X' \arrow[d, tail] \\  X \arrow[r, two heads] & X / X'. 
\end{tikzcd} \] with the outer rectangle being a pullback. Thus, by the Pullback Lemma and \cite[Proposition 2.15]{Bu}, $\alpha$ is an admissible monic. \end{proof}
\begin{remark}\label{maximal}
By the Fourth $\E$-isomorphism theorem (Proposition \ref{4 iso}), an $\E$-subobject $(Y,f)$ of an object $X$ is $\E$-maximal if and only if for all commutative diagrams \[ \begin{tikzcd} Y \arrow[dr, tail, "f"] \arrow[d, tail, "g"'] & \\  Z \arrow[r, tail, "h"'] & X   \end{tikzcd} \] either $g$ or $h$ is an isomorphism.
\end{remark}
%%%%%%%%%%%%%%%%%%%%%%%%%%%%%%%%%%%%
\section{The AI and AIS exact categories}
%%%%%%%%%%%%%%%%%%%%%%%%%%%%%%%%%%%%
In abelian categories, the notions of intersection and sum of subobjects are given by pullbacks and pushouts respectively, see \cite[Section 5]{GR} and \cite[Definition 2.6]{pop}. In this paragraph, we investigate whether these concepts carry to exact categories. We recall the definitions of admissible intersection and sum that were first defined by the second author in \cite{HR}, then show that these lead to characterisations of quasi-abelian and abelian categories respectively.

%%%%%%%%%%%%%%%%%%%%%%%%%%%%%%%%%%%%
\subsection{Definitions and properties}\label{section:quasi-nice}

The intersection, which exists and is well defined in a pre-abelian exact category, is not necessarily an \emph{admissible} subobject. We recall the definition of exact categories satisfying the admissible intersection property and the admissible sum property from \cite{HR}. Note that, in a previous version of \cite{HR}, the name quasi-n.i.c.e. was used
in the sense that they are {\bf n}ecessarily {\bf i}ntersection {\bf c}losed {\bf e}xact categories, and which we will call {\bf{A.I}} since they admit {\bf{A}}dmissible {\bf{I}}ntersections:

\begin{definition}\cite[Definition 4.3]{HR}\label{quasi-nice}  
An exact category $(\A, 
\E)$ is called an \emph{AI-category} if $\A$ is  pre-abelian additive category  satisfying the following additional axiom:
\begin{itemize}
\item[$({AI})$] 
 The {pullback} $A$ of two admissible monics $j: C \rightarrowtail D$ and $g: B\rightarrowtail D$ exists and yields two admissible monics $i$ and $f$.
\[
\begin{tikzcd}
{A} \arrow[r, tail, "i"] \arrow[d, "f"', tail] & {B} \arrow[d, "g", tail] \\
{C} \arrow[r, tail, "j"']           & {D} \arrow[ul, phantom, "\lrcorner" very near start]          
\end{tikzcd}
\]
\end{itemize}
\end{definition} \label{nice def} 

Let us now introduce a special sub-class of the AI exact categories, that we call {\bf{A.I.S}} exact categories, since they admit {\bf{A}}dmissible {\bf{I}}ntersections and {\bf{S}}ums:
\begin{definition}\cite[Definition 4.5]{HR}\label{AIS} 
An exact category $(\A, \E)$ is called an \emph{AIS-category} if it is an AI-category and moreover it satisfies the following additional axiom:
\begin{itemize}
\item[$({AS})$]The morphism $u$ in the diagram below, given by the universal property of the {pushout} $E$ of $i$ and $f$ {coming from the pullback diagram of the axiom $(AI)$ above}, is an admissible monic.
\[
\begin{tikzcd}
{A} \arrow[r, tail, "i"] \arrow[d, "f"', tail] & {B} \arrow[d, "l", tail]  \arrow[ddr, "g", tail, bend left]& \\
{C} \arrow[r, tail, "k"']   \arrow[drr, tail, "j"', bend right]        & {E} \arrow[ul, phantom, "\ulcorner" near end]  \arrow[dr, tail, "u"] \\ & & D       
\end{tikzcd}
\]
\end{itemize}
\end{definition}

\begin{remark}
One may consider the duals of the above definitions by taking admissible epics instead of monics. Since our focus is on $\E-$subobjects we only study the above and simply remark that the dual definitions lead to the duals of the results of the rest of Section 4, which hold without statement. 
\end{remark}

 Assume now that $(\A,\E)$ is an AIS-category and let us define relative notions of intersection and sum:
\begin{definition}\cite[Definition 4.6]{HR}\label{intersection & sum}
Let $(X_1,i_1)$, $(X_2,i_2)$ be two $\E$-subobjects of an object $X$. We define their \emph{intersection} $X_1{\cap}_X X_2$, to be the pullback
\[ \begin{tikzcd} X_1{\cap}_X X_2  \arrow[dr, phantom, "\lrcorner", very near end]  \arrow[r, "s_1", tail] \arrow[d, "s_2"', tail] & X_1 \arrow[d, "i_1", tail] \\ X_2 \arrow[r, "i_2"', tail] & X. \end{tikzcd} \] 
We then define their \emph{sum}, $X_1{+}_{X}X_2$, to be the pushout  
\[ \begin{tikzcd} X_1{\cap}_X X_2    \arrow[r, "s_1", tail] \arrow[d, "s_2"', tail] & X_1 \arrow[d, "j_1", tail] \\ X_2 \arrow[r, "j_2"', tail] & \arrow[ul, phantom, "\ulcorner", near end]X_1{+}_{X}X_2. \end{tikzcd}  \]
\end{definition}

\begin{remark} \label{sum intersection as coker ker} Equivalently, for two $\E$-subobjects $(X_1,i_1)$, $(X_2,i_2)$  of an object $X$ we have 
\[
    X_1{\cap}_X X_2  = \Ker \left( \begin{tikzcd}[sep= large]
X_1 \oplus X_2 \arrow[r, "{[ i_1 - i_2] }"] & X \end{tikzcd} \right) \]
and 
\[
    X_1{+}_{X}X_2  = \Coker \left( \begin{tikzcd}[sep=large]
X_1 \cap_X X_2 \arrow[r, "{\left[ s_1  -s_2  \right]^t}"] & X_1 \oplus X_2
\end{tikzcd} \right). \]
%Thus, as the direct sum is an associative operation, so are the sum and intersection operations. Moreover, the direct sum is commutative up to isomorphism, and so are the sum and intersection.
\end{remark}

Let us note that these definitions generalises the abelian versions as shown in \cite{HR}.
\begin{remark}\cite[4.8, 4.12, 4.13]{HR}\label{HR}   Let  $(X_1,i_1)$, $(X_2,i_2)$ and $(Y,j)$ be $\E$-subobjects of an object $X$. Then
\begin{enumerate}
    \item[a)]$X_1 \cap_X X_1= X_1 = X_1 +_X X_1$.
    \item[b)]If $X_1{+}_{X}X_2=0_{\A}$ then $X_1=X_2=0_{\A}$.
     \item[c)] If $(\A, \E)$ is an AI-category and there exists an admissible monic \[ i : X_1\rightarrowtail X_2\] then there exists an admissible monic \[X_1{\cap} Y\rightarrowtail X_2{\cap} Y.\]
     \item[d)] If $(\A, \E)$ is an AIS-category and there exists an admissible monic \[ i : X_1\rightarrowtail X_2\] then there exists an admissible monic \[X_1+ Y\rightarrowtail X_2+ Y.\]
\end{enumerate}
\end{remark}

\begin{lemma} \label{zero intersection}
Let $(\A, \E)$ be an exact category and let $f:X \rightarrowtail Z$ and $g: Y \rightarrowtail Z$ be admissible monics. Suppose that $X \cap_Z Y$ exists and is the zero object, then $X+_Z Y \cong X \oplus Y$.
\end{lemma}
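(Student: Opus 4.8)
The plan is to use the characterisation of the sum via a cokernel from Remark \ref{sum intersection as coker ker}. Since $X \cap_Z Y = 0$ by hypothesis, the map $\left[ s_1\; -s_2 \right]^t \colon X \cap_Z Y \to X \oplus Y$ is the zero morphism $0 \to X \oplus Y$, whose cokernel is $1_{X \oplus Y} \colon X \oplus Y \to X \oplus Y$. Hence $X +_Z Y = \Coker\!\left( 0 \to X \oplus Y \right) \cong X \oplus Y$, and the isomorphism is induced by the comparison of cokernels. This is the whole argument in outline; the remaining work is to make sure the hypotheses of Remark \ref{sum intersection as coker ker} actually apply here.

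First I would note that Remark \ref{sum intersection as coker ker} and Definition \ref{intersection & sum} are stated in the context of an AIS-category, whereas the present lemma only assumes $(\A, \E)$ is an arbitrary exact category in which the particular pullback $X \cap_Z Y$ happens to exist and is zero. So the clean approach is not to invoke the general machinery but to verify directly that $X \oplus Y$, together with the obvious split inclusions $j_1 = \left[\begin{smallmatrix} 1 \\ 0 \end{smallmatrix}\right]$ and $j_2 = \left[\begin{smallmatrix} 0 \\ 1 \end{smallmatrix}\right]$, is the pushout of $s_1 \colon 0 \to X$ and $s_2 \colon 0 \to Y$. Concretely: the square commutes trivially (both composites $0 \to X \oplus Y$ are zero), and given any object $W$ with morphisms $a \colon X \to W$, $b \colon Y \to W$ agreeing on $0$ (automatic), the map $\left[ a \; b \right] \colon X \oplus Y \to W$ is the unique morphism with $\left[ a \; b \right] j_1 = a$ and $\left[ a \; b \right] j_2 = b$, uniqueness being immediate from the biproduct relations. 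Thus $X \oplus Y$ satisfies the universal property of the pushout, so $X +_Z Y$ — defined precisely as this pushout — is isomorphic to $X \oplus Y$.

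There is essentially no obstacle here; the statement is a soft formal fact about pushouts along zero maps in any additive category, and the only thing to be careful about is the bookkeeping: matching the definition of $X +_Z Y$ (the pushout of $s_1, s_2$, where $s_1, s_2$ are the legs of the pullback square defining $X \cap_Z Y$) with the observation that when the apex of that pullback is $0$ the two legs are forced to be the zero morphisms $0 \to X$ and $0 \to Y$. One should also remark that $X \oplus Y$ exists since $\A$ is additive, so the pushout in question does exist and is a genuine biproduct; if desired one can add that the induced admissible monic $X +_Z Y \rightarrowtail Z$ (which exists in the AIS setting, by axiom $(AS)$) is then the map $\left[ f \; g \right]$, though that is not needed for the statement as phrased.
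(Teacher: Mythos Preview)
Your proposal is correct and follows essentially the same approach as the paper: both arguments verify directly that the pushout of the zero morphisms $0 \to X$ and $0 \to Y$ is the biproduct $X \oplus Y$ with its canonical inclusions, whence $X +_Z Y \cong X \oplus Y$ by definition. Your write-up simply makes explicit the ``direct computation'' the paper alludes to, and your caution about not relying on the AIS framework is well placed.
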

\begin{proof}
By assumption, there is a pullback diagram in $\A$:
\[\begin{tikzcd}
 0 \arrow[r, tail] \arrow[d, tail] & X \arrow[d, "f", tail] \\ Y \arrow[r, tail, "g"'] & Z. \arrow[ul, phantom, "\lrcorner", near start] 
\end{tikzcd}\] By direct computation we have that 
\[\begin{tikzcd}
 0 \arrow[r, tail] \arrow[d, tail] & X \arrow[d, "s", tail] \\ Y \arrow[r, tail, "t"'] & X \oplus Y \arrow[ul, phantom, "\ulcorner", near end] 
\end{tikzcd}\] is a pushout diagram for any {pair of morphisms  $s$ and $t$ satisfying the universal property of the coproduct}. Thus, by definition, $X+_Z Y \cong X \oplus Y$. 
\end{proof}

%%%%%%%%%%%%%%%%%%%%%%%%%%%%%%%%%%%%%%
\subsection{AI-categories and quasi-abelian categories}
It is not difficult to see that the split exact structure $\E_{min}$ does \emph{not} satisfy axiom (AI) unless every sequence splits in $\A$. Compare also 
\cite[Remark 2.4 and 5.3]{Kelly} which helps to show that the category of abelian groups equipped with $\E_{min}$ does \emph{not} satisfy axiom (AI).
In fact, an exact structure needs to contain {\em all} short exact sequences in order to satisfy the (AI) axiom:

\begin{proposition}\label{all} Let $(\A ,\E)$ be an exact category. If $(A, \E)$ is  an AI-category, then $\E = \E_{all}$.
\end{proposition}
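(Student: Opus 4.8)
The plan is to show that if $(\A,\E)$ is an AI-category, then every short exact sequence in $\A$ (i.e.\ every kernel-cokernel pair) already belongs to $\E$, so that $\E = \E_{all}$. Since $\A$ is pre-abelian (being an AI-category), arbitrary pullbacks and pushouts exist, and every morphism has a kernel and a cokernel; the task is to upgrade an arbitrary kernel to an admissible monic and an arbitrary cokernel to an admissible epic.

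First I would take an arbitrary short exact sequence $\begin{tikzcd} A \arrow[r, tail, "k"] & B \arrow[r, two heads, "c"] & C \end{tikzcd}$ in $\A$ (with $k = \ker c$ and $c = \coker k$), and aim to realise $k$ as the pullback of two admissible monics. The natural candidates are built from the canonical split monic: consider the admissible monic $\bsm 1 \\ 0 \esm : B \rightarrowtail B \oplus C$ and the morphism $\bsm c \\ 1 \esm : C \to B \oplus C$ — but I must be careful about which map is a monic. A cleaner route: form the pullback of $\bsm 1 \\ 0 \esm : B \rightarrowtail B\oplus C$ along $\bsm 0 \\ 1\esm : C \rightarrowtail B \oplus C$ (both split monics, hence admissible monics since $\E_{min}\subseteq\E$); by the (AI) axiom this pullback exists and the legs are admissible monics, but this pullback is just $0$. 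That is not yet $k$. Instead, I would use a graph-type construction: the morphism $\bsm 1_B \\ c \esm : B \rightarrowtail B\oplus C$ is a split monic (it has retraction $[1\ 0]$), hence an admissible monic, and $\bsm 1_B \\ 0 \esm : B \rightarrowtail B \oplus C$ is likewise an admissible monic. Their pullback in $\A$, computed directly, is $\{(b,b') : b = b',\ cb = 0\} \cong \ker c = A$, with the leg into the first copy of $B$ being exactly $k$. By axiom (AI) this pullback is an admissible monic, so $k$ is an admissible monic. Dually (or by applying the argument in $\A^{op}$, noting that AI-categories need not be self-dual — so I should instead argue directly), I would show $c$ is an admissible epic: since $k$ is now an admissible monic it has a cokernel in $\E$, which by uniqueness of cokernels is isomorphic to $c$, so the kernel-cokernel pair $(k,c)$ lies in $\E$.

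The one genuine subtlety, and the step I expect to be the main obstacle, is verifying the direct computation that the relevant pullback of the two split monics into $B \oplus C$ really is $A$ with leg $k$ — i.e.\ that $\{(b,b):cb=0\}$ with the obvious maps satisfies the pullback universal property in the pre-abelian category $\A$, not merely set-theoretically. This is a routine but slightly fiddly diagram chase using that $k = \ker c$; once it is in place, the conclusion is immediate from axiom (AI). I should also double-check that I do not need the full strength of (AI) for \emph{both} legs: (AI) as stated gives that \emph{both} $i$ and $f$ in the pullback square are admissible monics, which is exactly what is used here (I need the leg $k: A \rightarrowtail B$, which is one of the two legs of the pullback square with corners $A, B, B, B\oplus C$). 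With $k$ admissible, the cokernel pair is forced into $\E$, giving $\E = \E_{all}$.
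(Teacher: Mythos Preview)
Your proof is correct and is essentially the same as the paper's: both pull back the two split admissible monics $\bsm 1 \\ c \esm$ and $\bsm 1 \\ 0 \esm$ from $B$ into $B\oplus C$ and identify the resulting pullback leg with the given kernel $k$, forcing $k$ to be an admissible monic by axiom (AI). The paper merely phrases this as a contrapositive (assuming a non-admissible $f$ exists and exhibiting the same square as a violation of (AI)), and the ``subtlety'' you flag about the pullback computation is exactly the step the paper declares ``easy to verify'' --- it is indeed routine via $\Ker \bsm 1 & -1 \\ c & 0 \esm$.
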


\begin{proof}
Let us suppose that the exact structure $\E$ is strictly included in $\E_{all}$, thus there exists a short exact sequence \[ S : \qquad \begin{tikzcd} 0 \arrow[r] & L \arrow[r, "f"] & M \arrow[r, "g"] & N \arrow[r] & 0 \end{tikzcd} \] 
such that $S \notin \E$.
\bigskip

Consider the two sections
\[
\bsm 1 \\ g \esm: M\rightarrow M\oplus N\]
and
\[
\bsm 1 \\ 0 \esm: M\rightarrow M\oplus N.\]
\\
It is easy to verify that the pull-back of these two morphisms is:
\[
\begin{tikzcd}
  L \arrow[r, tail, "f"] \arrow[d, tail,"f"'] & M \arrow[d, "{\bsm 1 \\ g \esm}", tail] \\ M \arrow[r, tail, "{\bsm 1 \\ 0 \esm}"'] & M \oplus N. \arrow[ul, phantom, "\lrcorner", near start] 
\end{tikzcd}
\]

Since $f$ is not admissible in $\E$, the (AI) axiom is not satisfied and 
$(\A, \E)$ is therefore not an AI-category.

\end{proof}
\begin{remark}\label{unique}
The previous proposition shows that an exact structure satisfying the (AI) axiom is unique, when it exists on an additive category.
\end{remark}

%%%%%%%%%%%%%%%%%%%%%%%%%%%%%
Before showing that the AI categories form a sub-class of quasi-abelian additive categories, let us recall the following:

\begin{lemma}\cite[1.1.7]{Sch}\cite[4.4]{Bu}\label{qa 1} In any quasi-abelian category, the class of all short exact sequences defines an exact structure $\E_{all}$ and this is the maximal one $\E_{max}=\E_{all}$. In particular this is the case for abelian categories (see also \cite{Ru01}).
\end{lemma}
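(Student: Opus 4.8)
The plan is to deduce the statement from two facts already recorded above: in any pre-abelian category the class $\E_{sta}$ of stable short exact sequences forms an exact structure (the Remark following \cite{SW11}), and the class $\E_{str}$ of strict short exact sequences is the maximal exact structure, $\E_{max}=\E_{str}$, in any pre-abelian category (\cite{SW11}). Granting these, the argument reduces to a short inclusion chase.

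First I would unwind the hypothesis: by definition a quasi-abelian category $\A$ is pre-abelian and satisfies $\E_{all}=\E_{sta}$. Since $\E_{sta}$ is an exact structure on the pre-abelian category $\A$, the class $\E_{all}$ is an exact structure, which is the first assertion. For the second, $\E_{max}$ is by construction the largest exact structure on $\A$ with respect to inclusion, so the exact structure $\E_{all}$ satisfies $\E_{all}\subseteq\E_{max}$; conversely $\E_{max}=\E_{str}$, and a strict short exact sequence is in particular a short exact sequence, so $\E_{str}\subseteq\E_{all}$. Chaining these gives $\E_{all}\subseteq\E_{max}=\E_{str}\subseteq\E_{all}$, hence $\E_{all}=\E_{max}$.

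For the final clause I would check that every abelian category is quasi-abelian, so that the argument applies verbatim. Indeed, in an abelian category pushouts of monics are monics and pullbacks of epics are epics; hence every monic is a semi-stable kernel and every epic a semi-stable cokernel, so every short exact sequence is stable and $\E_{all}=\E_{sta}$. (Alternatively, an abelian category is precisely a pre-abelian category in which every morphism is strict, so that $\E_{all}=\E_{str}=\E_{sta}$ directly.)

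Modulo the two cited inputs from \cite{SW11}, there is no genuine obstacle — the heart of the matter (that $\E_{sta}$ is always an exact structure and that the maximal exact structure exists and equals $\E_{str}$) is exactly what is imported from the pre-abelian theory, and the rest is the inclusion chase of the second paragraph. It is perhaps worth recording that the inclusion $\E_{sta}\subseteq\E_{str}$ appearing there, which came for free from the maximality of $\E_{str}$, also admits a direct verification: a semi-stable kernel is in particular a kernel, and in a pre-abelian category a monic is a kernel if and only if it is strict (its canonical morphism $\Coim f\to\Im f$ being then an isomorphism), and dually for cokernels; hence every stable short exact sequence is strict.
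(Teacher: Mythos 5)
Your argument is correct, but it takes a different route from the paper: the paper does not prove this lemma at all, it cites it (Schneiders [Sch, 1.1.7] and B\"uhler [Bu, 4.4]), and those references establish it by verifying the axioms of an exact structure directly from the quasi-abelian axioms (stability of kernels under pushout and of cokernels under pullback). You instead deduce it from the general pre-abelian facts recorded in the background section, namely that $\E_{sta}$ is an exact structure and that $\E_{max}=\E_{str}$ (Sieg--Wegner), combined with the paper's definition of quasi-abelian as $\E_{all}=\E_{sta}$; given that definition this is a perfectly legitimate and in fact very natural derivation, the only cost being that it imports the substantially harder Sieg--Wegner theorem to settle the easier quasi-abelian special case. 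Two small remarks: your detour through $\E_{str}$ for maximality is unnecessary --- once $\E_{all}$ is known to be an exact structure it is automatically maximal, since every exact structure is by definition a class of kernel--cokernel pairs and hence contained in $\E_{all}$ --- and this shortcut also avoids leaning on the statement $\E_{max}=\E_{str}$, whose formulation in the paper is somewhat delicate (with the paper's definition of strictness every kernel--cokernel pair is already strict). Finally, in the abelian case your first argument (pushouts of monics are monic, every monic is a kernel, and dually) is the one to keep; the parenthetical claim that $\E_{all}=\E_{str}=\E_{sta}$ follows ``directly'' from strictness of all morphisms is not immediate, since $\E_{str}=\E_{sta}$ itself requires exactly the semi-stability argument you just gave.
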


\begin{lemma}\label{qa 2}
Every additive category $\A$ admitting $\E_{all}$ as an exact structure is quasi-abelian.
\end{lemma}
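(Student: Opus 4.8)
The plan is to show that an additive category $\A$ carrying $\E_{all}$ as an exact structure is pre-abelian and that $\E_{all} = \E_{sta}$, which is exactly Definition \ref{quasi abelian}. The first task is to produce kernels and cokernels. Given an arbitrary morphism $\phi : X \to Y$, I would consider the split monic $\bsm 1 \\ \phi \esm : X \rightarrowtail X \oplus Y$; since the split exact structure is contained in every exact structure, this is an admissible monic, hence admits a cokernel, and by the same token every admissible monic in $\E_{all}$ has a cokernel. But this does not yet give kernels of all morphisms. Instead, the cleaner route: for $\phi: X \to Y$, form the short exact sequence whose middle term is the pushout/mapping-cone-type object. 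Actually the slickest approach is to recall that in an additive category where \emph{every} kernel-cokernel pair is admissible, one can build the kernel of any $\phi: X\to Y$ as follows. The morphism $\bsm 1_X \\ \phi \esm$ has a cokernel, call it $c : X \oplus Y \to C$; then $c \circ \bsm 0 \\ 1_Y \esm : Y \to C$ composed appropriately relates to $\phi$, and chasing this gives that $\phi$ factors through a kernel. I would instead invoke the standard fact (cf. \cite{Bu}) that in an exact category the pullback of an admissible monic along an arbitrary map always exists, and combine it with the hypothesis that admissible monics $=$ all monic parts of kernel-cokernel pairs.

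Here is the concrete step I would actually carry out. Let $\phi: X \to Y$ be arbitrary. By (A2)$^{op}$ applied in $\E_{all}$, or more elementarily using that $\bsm 1 \\ 0 \esm: X \rightarrowtail X\oplus Y$ and $\bsm 1 \\ \phi \esm: X\rightarrowtail X\oplus Y$ are admissible monics, take the cokernel $d: X\oplus Y \twoheadrightarrow Q$ of $\bsm 1 \\ \phi \esm$. Then the composite $X \xrightarrow{\bsm 0 \\ 1 \esm} X \oplus Y \xrightarrow{d} Q$ — wait, I want the kernel of $\phi$, so I should instead look at $\psi := d\circ \bsm 0 \\ 1_Y \esm : Y \to Q$ and check that $\ker\psi \cong \coker(\text{something})$... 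The truly routine way: it is classical (and stated e.g. in Bühler's notes and in \cite{SW11}) that an additive category is pre-abelian as soon as every split monic has a cokernel and every split epic has a kernel, because $\ker\phi = \ker\bigl(\bsm -\phi\ 1\esm\bigr)$ and the latter is the kernel part of the split sequence $Y \xrightarrow{\bsm 1 \\ \phi\esm} X\oplus Y \xrightarrow{\bsm -\phi\ 1 \esm} X$... no. Let me just state the honest plan: use that $\bsm 1\\ \phi \esm$ is an admissible monic hence has cokernel $g : X\oplus Y \to C$; one checks $g\bsm 0 \\ 1 \esm$ is a cokernel of $\phi$ up to the obvious identification, giving all cokernels, and dually all kernels from admissible epics. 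Thus $\A$ is pre-abelian.

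Then the second, and genuinely easy, step: in a pre-abelian category every element of $\E_{all}$ is automatically a \emph{stable} short exact sequence, i.e. $\E_{all} \subseteq \E_{sta}$, simply because pushouts of the monic part along arbitrary maps exist (pre-abelian) and, since \emph{every} kernel-cokernel pair lies in $\E = \E_{all}$, the pushed-out monic is again a kernel — that is precisely semi-stability of the kernel; dually for the cokernel. Conversely $\E_{sta} \subseteq \E_{all}$ trivially since $\E_{sta}$ consists of short exact sequences. Hence $\E_{all} = \E_{sta}$ and $\A$ is quasi-abelian by Definition \ref{quasi abelian}.

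The main obstacle is the first step: carefully verifying that $\A$ is pre-abelian from the bare hypothesis ``$\E_{all}$ is an exact structure.'' The subtlety is that a priori the axioms only guarantee cokernels of \emph{admissible} monics, and one must leverage that \emph{all} kernel-cokernel pairs are admissible — together with the existence of pushouts of split monics — to manufacture kernels and cokernels of \emph{arbitrary} morphisms via the $\bsm 1 \\ \phi \esm$ / $\bsm -\phi\ 1 \esm$ trick used already in Lemma \ref{composition} and Lemma \ref{addition}. Once pre-abelianness is in hand, the identification $\E_{all} = \E_{sta}$ is a short diagram chase using only the defining properties of semi-stable kernels and cokernels.
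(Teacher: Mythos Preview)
Your second step is exactly the paper's entire proof: the paper invokes (A2) and (A2)$^{op}$ to conclude that every kernel--cokernel pair is stable, hence $\E_{all}=\E_{sta}$, and then cites Definition~\ref{quasi abelian}. The paper does \emph{not} separately verify pre-abelianness; in its sole application (Theorem~\ref{quasi-nice is qa}) the category is already pre-abelian by the standing AI-hypothesis, so nothing is lost there.

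You are right that Definition~\ref{quasi abelian} formally demands pre-abelianness, but your attempt to extract it from the axioms does not work. The cokernel of the split monic $\bsm 1\\ \phi\esm:X\to X\oplus Y$ is the split epic $\bsm -\phi & 1\esm:X\oplus Y\to Y$, so your candidate $g\bsm 0\\1\esm$ equals $1_Y$, not $\coker\phi$; likewise $\ker\bsm -\phi & 1\esm$ is the graph $\bsm 1\\ \phi\esm:X\to X\oplus Y$, not $\ker\phi$. The claim that ``an additive category is pre-abelian as soon as every split monic has a cokernel and every split epic has a kernel'' is vacuous, since split monics and epics already have (co)kernels in \emph{any} additive category. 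In fact pre-abelianness genuinely does not follow from the bare hypothesis: in the category of even-dimensional $k$-vector spaces (cf.\ Example~\ref{even dimensional spaces}) every kernel--cokernel pair comes from $\mod k$ and hence splits, so $\E_{all}=\E_{min}$ is an exact structure, yet a rank-$3$ endomorphism of $k^4$ has no kernel in that category. So the step you correctly flag as ``the main obstacle'' cannot be completed; the lemma, read literally, needs an ambient pre-abelian hypothesis, which the paper tacitly has in hand wherever it uses the result.
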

\begin{proof}
By the axioms (A2) and (A2)$^{op}$ of an exact structure, every short exact sequence is stable. So it follows from Definition \ref{quasi abelian} that
$\A$ is quasi-abelian.
\end{proof}

\begin{theorem}\label{quasi-nice is qa}
Every AI-category $\A$ is quasi-abelian.
\end{theorem}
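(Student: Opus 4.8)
The plan is to deduce this quickly from the results already assembled in the excerpt. By definition an AI-category $(\A,\E)$ has $\A$ pre-abelian, so the only thing to verify is the condition $\E_{all}=\E_{sta}$ from Definition \ref{quasi abelian}. First I would invoke Proposition \ref{all}, which tells us that the ambient exact structure must satisfy $\E=\E_{all}$; in particular $\E_{all}$ is an exact structure on $\A$. Then Lemma \ref{qa 2} applies verbatim: any additive category admitting $\E_{all}$ as an exact structure is quasi-abelian, because axioms (A2) and (A2)$^{op}$ force every short exact sequence to be stable, whence $\E_{all}=\E_{sta}$. This chains together to give the claim in two lines.

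So the skeleton is: (1) $\A$ is pre-abelian by hypothesis; (2) by Proposition \ref{all}, $\E=\E_{all}$, so $\E_{all}$ is an exact structure on $\A$; (3) by Lemma \ref{qa 2}, $\A$ is quasi-abelian. One should perhaps add a sentence noting that, combined with Lemma \ref{qa 1}, this also identifies $\E=\E_{all}=\E_{max}$, which is the slightly sharper statement advertised in the introduction (``AI-categories are necessarily quasi-abelian with the maximal exact structure $\E_{max}$'').

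I do not anticipate a genuine obstacle here, since all the work has been front-loaded into Proposition \ref{all} and Lemma \ref{qa 2}. The one point requiring a moment's care is logical hygiene: Proposition \ref{all} is stated as ``if $(\A,\E)$ is an AI-category then $\E=\E_{all}$'', and one must be sure that its proof (the pullback-of-two-sections argument) genuinely does not presuppose quasi-abelianness — it only uses that $\A$ is pre-abelian (so the relevant pullback exists) together with the (AI) axiom, so there is no circularity. Given that, the proof is essentially a citation.

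\medskip

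\noindent\emph{Proof.} Since $(\A,\E)$ is an AI-category, $\A$ is pre-abelian by Definition \ref{quasi-nice}. By Proposition \ref{all} we have $\E=\E_{all}$, so in particular $\E_{all}$ is an exact structure on $\A$. Hence by Lemma \ref{qa 2} the category $\A$ is quasi-abelian. Moreover, combining with Lemma \ref{qa 1}, this exact structure is the maximal one, i.e.\ $\E=\E_{all}=\E_{max}$. $\qed$
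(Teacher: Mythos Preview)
Your proposal is correct and matches the paper's proof essentially verbatim: the paper also simply cites Proposition \ref{all} to get $\E=\E_{all}$ and then Lemma \ref{qa 2} to conclude quasi-abelianness. Your additional remark invoking Lemma \ref{qa 1} to identify $\E=\E_{max}$ is a harmless extra observation not present in the paper's two-line proof.
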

\begin{proof}
By Proposition \ref{all}, every exact structure satisfying the (AI) axiom is $\E= \E_{all}$ and then, by Lemma \ref{qa 2} $\A$, is quasi-abelian.
\end{proof}

\begin{example}\label{qa not nice}
We provide an example which demonstrates that not every quasi-abelian additive category with its maximal exact structure admits admissible sums: 
Consider the quiver
\[ Q : \qquad 1 \longrightarrow 2 \longrightarrow 3 
\]
The Auslander-Reiten quiver of $\rep Q$ is as follows:

\[\begin{tikzcd}
               &                                                    & {\color{blue}P_1} \arrow[rd]                    &                                                 &                        \\
               & {\color{blue}P_2} \arrow[ru, "a"] \arrow[rd, "b"'] &                                                 & {\color{blue}I_2} \arrow[rd] \arrow[ll, dotted] &                        \\
S_3 \arrow[ru] &                                                    & {\color{blue}S_2} \arrow[ru] \arrow[ll, dotted] &                                                 & S_1 \arrow[ll, dotted]  
\end{tikzcd}\]
Let {\color{blue}$\A$} be the full additive subcategory generated by the indecomposables {\color{blue}$P_2,P_1,S_2,I_2.$} Then $\A$ is an intersection $\cF \cap \T'$ where { $\cF$ is the torsion free class  of the hereditary torsion pair $(\T,\cF)= (\add(S_1), \add(S_3 \oplus P_2 \oplus P_1 \oplus S_2 \oplus I_2))$ and $\T'$ is the torsion class  of the cohereditary torsion pair $(\T',\cF')= (\add(P_2 \oplus P_1 \oplus S_2 \oplus I_2 \oplus S_1), \add(S_3))$ of $\rep Q$}. Following \cite[Theorem 3.2]{T19} and  \cite[Theorem 2]{Ru01}, we conclude that $\A$ is an integral quasi-abelian category with exact structure $\E_{all}$ generated by the Auslander-Reiten sequence
\[ \begin{tikzcd} 0 \arrow[r] & P_2 \arrow[r] & P_1 \oplus S_2 \arrow[r] & I_2 \arrow[r] & 0 \end{tikzcd}
\]

We verify that the axiom (AS) fails; to that end, we consider the following admissible monics in $\A$:
\[
\begin{tikzcd}
 & P_2 \arrow[d, tail, "{\bsm a \\ b\esm}"] \\ P_1 \arrow[r, tail, "{\bsm 1 \\ 0 \esm}"'] & P_1 \oplus S_2
\end{tikzcd}
\]
The pullback along these monics in the abelian category $\rep Q$
is given by the object $S_3$, but this is not available in $\A.$ 
Being quasi-abelian and so pre-abelian, $\A$ admits a pullback which is a subobject of the abelian pullback, thus the zero object. Hence, we have in $\A$ that 
the intersection along the given monics is 
$P_1 \cap P_2 = 0$, and therefore, by Lemma \ref{zero intersection}, $P_1 + P_2 = P_1 \oplus P_2$. However, the direct sum $P_1 \oplus P_2$ is not an admissible subobject of $P_1 \oplus S_2$, thus the axiom (AS) fails.
\end{example}
\medskip

The next results shows that having admissible intersections is not enough to be abelian. The computation of the previous example suggests to look for a quasi-abelian subcategory of an abelian category where the pullback diagrams in axiom (AI) coincide with the abelian ones (this is not in general true for all kernels), thus one gets an  AI-subcategory.
Typical examples of such quasi-abelian but non-abelian categories arise in functional analysis (see \cite{HSW} for more examples):

\begin{definition}
We denote by $\Ban$ the category of Banach spaces (over the field of real numbers). The objects of $\Ban$ are the complete normed $\bR-$vector spaces, and morphisms are continuous linear maps.
\end{definition}

The kernel of a morphism $f:X \to Y$ in $\Ban$ is the linear kernel
$ f^{-1}(0) \rightarrowtail X $, however the cokernel \[Y \twoheadrightarrow Y/ \overline{f(X)}\] in $\Ban$ is in general different from the linear cokernel $Y \to Y/ f(X)$. 
Thus $f:X \to Y$ is an admissible monic in $\Ban$ precisely when $f$ is a monomorphism such that $f(X)$ is closed in $Y$.
The Open Mapping Theorem for Banach spaces guarantees that an admissible monic $f:X \to Y$ is an isomorphism onto $f(X)$.
In fact the class $\E=\E_{all}$ of all kernel-cokernel pairs coincides with the class of short exact sequences of bounded linear maps, see \cite[IV.2]{Bu11}.
It is well-known that the category $\Ban$ is quasi-abelian with the maximal exact structure $\E_{all}$, but it is not abelian. 
We verify here the admissible intersection property and we reprove, using Theorem \ref{quasi-nice is qa} that $\Ban$ is quasi-abelian:

\begin{theorem}\label{nice not abelian}
The category $\Ban$ of Banach spaces,  equipped  with the maximal exact structure $\E=\E_{all}$, is an AI-category.
\end{theorem}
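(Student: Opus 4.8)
The plan is to verify directly the two clauses of Definition \ref{quasi-nice} for $(\Ban, \E_{all})$: that $\Ban$ is a pre-abelian additive category, and that it satisfies axiom (AI). The first is classical and has already been recalled above --- the kernel of $f \colon X \to Y$ in $\Ban$ is the linear kernel $f^{-1}(0) \rightarrowtail X$, and the cokernel is $Y \twoheadrightarrow Y/\overline{f(X)}$ --- so all the actual work sits in axiom (AI). I would also use throughout the description recalled just before the statement: a morphism in $\Ban$ is an admissible monic precisely when it is an injective bounded linear map with closed image (the Open Mapping Theorem then making it an isomorphism onto that image).

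Concretely, given admissible monics $g \colon B \rightarrowtail D$ and $j \colon C \rightarrowtail D$ --- i.e. embeddings of closed subspaces of $D$ --- I would form their pullback. Since $\Ban$ is pre-abelian the pullback exists and, as in any additive category, is the kernel of the difference map:
\[
A = \Ker\bigl( [\,g,\,-j\,] \colon B \oplus C \to D \bigr) = \{\, (b,c) \in B \oplus C \mid g(b) = j(c) \,\},
\]
a closed linear subspace of the Banach space $B \oplus C$, hence itself a Banach space, with pullback projections $i \colon A \to B$ and $f \colon A \to C$ obtained by restricting the two coordinate projections. The key --- and, I expect, the only delicate --- point is that for \emph{monics} this categorical kernel really is the honest set-theoretic fibre product of $g$ and $j$: no closure operation enters. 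This is in sharp contrast with cokernels in $\Ban$, and it is exactly what makes the pullback in $\Ban$ coincide with the one taken in the abelian category of $\bR$-vector spaces, as anticipated by the discussion preceding the theorem. So the "main obstacle" here is really one of care rather than difficulty: once one is confident that $A$ is the genuine fibre product, everything else is routine.

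It then remains to check that $i$ and $f$ are admissible monics. For injectivity of $i$: if $i(b,c) = b = 0$ then $j(c) = g(b) = 0$, so $c = 0$ by injectivity of $j$; symmetrically $f$ is injective, using injectivity of $g$. For closedness of the image: one has $i(A) = \{\, b \in B \mid g(b) \in j(C) \,\} = g^{-1}(j(C))$, which is closed in $B$ since $j(C)$ is closed in $D$ and $g$ is continuous, and symmetrically $f(A) = j^{-1}(g(B))$ is closed in $C$. Hence $i$ and $f$ are injective bounded linear maps with closed image, i.e. admissible monics, so axiom (AI) is satisfied and $(\Ban, \E_{all})$ is an AI-category. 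By Theorem \ref{quasi-nice is qa} this in particular recovers the well-known fact that $\Ban$ is quasi-abelian, as promised.
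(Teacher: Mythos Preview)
Your proof is correct and follows essentially the same approach as the paper's. Both arguments hinge on the fact that the pullback in $\Ban$ coincides with the linear (set-theoretic) one because kernels in $\Ban$ agree with linear kernels, and then use that the intersection/preimage of closed subspaces is closed. The paper phrases this by identifying $X_0,X_1$ with closed subspaces of $X$ and invoking \cite{HR} for the pullback property of $X_0\cap X_1$; you instead compute the fibre product inside $B\oplus C$ and verify directly that $i(A)=g^{-1}(j(C))$ is closed. Your version is a little more self-contained, but the underlying idea is identical.
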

\begin{proof}
Consider two $\E$-subobjects $(X_0,f_0)$, $(X_1,f_1)$ of an object $X$ in $\Ban$. Since the admissible monics $f_i$ are isomorphisms onto their range $f_i(X_i)$, we can identify $X_0$ and $X_1$ with closed subspaces of $X$. The intersection of closed subspaces is closed, therefore we have the following diagram of closed embeddings (which are admissible monics):
\[
 \begin{tikzcd}
{X_0{\cap} X_1} \arrow[r, tail, "i_1"] \arrow[d, "i_0"', tail] & {X_1} \arrow[d, "f_1", tail]   \\
{X_0} \arrow[r, tail, "f_0"']        & {X}    
\end{tikzcd}
\]
From \cite{HR}, we know that the object $X_0 \cap X_1$  satisfies the pullback property from axiom (AI) in $\mod \bR.$
Since the pullback can be written as kernel (Remark \ref{sum intersection as coker ker}) and kernels in $\Ban$ are the kernels in mod $\bR,$ we conclude that the (AI)-axiom is satisfied: The pullback along admissible monics exists, and yields admissible monics. 
\end{proof}

\begin{remark}
While $\Ban$ satisfies the admissible intersection property, it does not satisfy the admissible sum property and so it is not abelian by Theorem \ref{abelian 1}.
Consider for a moment the second axiom (AS) in the setting studied in the proof of the previous theorem: It is shown in \cite[Chapter 3.1]{BS} that {\em both,} the intersection $X_0 \cap X_1$ and the sum $X_0 + X_1$ (as subvector spaces of $X$) admit norms turning them into Banach spaces, satisfying that
\[  X_0 \cap X_1 \hookrightarrow X_i \hookrightarrow X_0 + X_1 
\]
are continuous embeddings for $i=0,1.$ In fact, the whole interval between $X_0 \cap X_1$ and $ X_0 + X_1$ is studied in \cite{Ma}, as {\em interpolations} between intersection and sum. 
We summarize the situation in the following diagram:
\[
\begin{tikzcd}
{X_0{\cap} X_1} \arrow[r, tail, "i_1"] \arrow[d, "i_0"', tail] & {X_1} \arrow[d, "j_1", tail]  \arrow[ddr, "f_1", tail, bend left]& \\
{X_0} \arrow[r, tail, "j_0"']   \arrow[drr, tail, "f_0"', bend right]        & {X_0{+} X_1}  \arrow[dr, "r"] \\ & & X       
\end{tikzcd}
\] 
The sum $X_0+X_1$  is the pushout in $\mod \bR$, hence satisfies the pushout property in $\Ban$ since the kernel-cokernel pairs of bounded maps in mod $\bR$ are also exact in $\Ban$. However,  the inclusion map $r : X_0+X_1 \to X$ (which is bounded, thus continuous) is not an admissible monic in general: The norm on $X_0+X_1$ is given in \cite[Chapter 3.1]{BS} by 
\[ \| x \|_{X_0+X_1} = \inf \{ \| x_0 \|_{X_0} + \| x_1 \|_{X_1} \; | \; x_0+x_1 = x \}, \]
and with respect to this norm, the subspace  $X_0+X_1$ in $X$ is not necessarily closed.
In fact, since we show, in Proposition \ref{nice is abelian}, that any AIS-category is abelian, and $\Ban$ is not abelian, we conclude that {\em it is impossible to define a norm on the subspace $X_0+X_1$ of $X$ which turns $X_0+X_1$ into a complete closed subspace of $X$.}
Rephrased in terms of the poset $\P^\E_X$ of closed subspaces of a Banach space $X$, the observation above shows that the sum $X_0+X_1$ is  in general not an element of $\P^\E_X$. However, the intersection $X_0 \cap X_1$ is defined in $\P^\E_X$, turning the poset $\P^\E_X$ into a meet semi-lattice. It is well-known that a meet semi-lattice is a lattice when it is {\em complete}, i.e. closed under arbitrary intersections, and admits a unique maximal element (which is $X$ here). However, this is also not true for the category $\Ban$ equipped with $\E=\E_{all}$ since an infinite intersection of closed subspaces need not be closed.
\end{remark}

\bigskip 

We proved in Theorem \ref{quasi-nice is qa} that admitting admissible intersections is enough to be quasi-abelian, but it has been proved recently in \cite[Theorem 6.1]{HSW}\label{HSW 6.1} that the converse also holds, and hence together a new characterisation of quasi-abelian categories is established. 
For the convenience of the reader, and with the kind permission of the authors of \cite{HSW}, we also include their part of the proof below:

\begin{theorem}{\bf{(Br\"ustle, Hassoun, Shah, Tattar, Wegner)}}
A category $(\mathcal{A}, \E_{max})$ is quasi-abelian if and only if it is an AI-category.
\end{theorem}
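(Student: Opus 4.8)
The plan is to establish the two implications separately. That every AI-category $(\mathcal{A},\E_{max})$ is quasi-abelian is exactly Theorem \ref{quasi-nice is qa}, so that direction requires nothing new. For the converse, which is the part due to \cite[Theorem 6.1]{HSW}, I would begin with a quasi-abelian $\mathcal{A}$ equipped with its maximal exact structure. Since $\mathcal{A}$ is quasi-abelian it is in particular pre-abelian, so all pullbacks and pushouts exist, and by Lemma \ref{qa 1} one has $\E_{max}=\E_{all}$. The first step is to pin down the admissible monics of $\E_{max}$: a morphism $j$ is an admissible monic of $\E_{all}$ precisely when it occurs in some kernel-cokernel pair $(j,d)$, that is, precisely when $j$ is a kernel; conversely, any kernel $j$ is a normal monomorphism, so $(j,\Coker(j))$ is a kernel-cokernel pair and $j$ is an admissible monic. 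Hence in $(\mathcal{A},\E_{max})$ the admissible monics are exactly the kernels.

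With this identification, the axiom (AI) becomes a purely categorical matter. Given admissible monics $j: C\rightarrowtail D$ and $g: B\rightarrowtail D$, I form the pullback
\[
\begin{tikzcd}
A \arrow[r, tail, "i"] \arrow[d, tail, "f"'] & B \arrow[d, tail, "g"] \\
C \arrow[r, tail, "j"'] & D \arrow[ul, phantom, "\lrcorner" very near start]
\end{tikzcd}
\]
which exists because $\mathcal{A}$ is pre-abelian and, by uniqueness of pullbacks, is the square tested by (AI). Now I use the elementary fact that kernels are stable under base change: writing $j=\Ker(d)$, a direct check gives $i=\Ker(d\circ g)$, and symmetrically, writing $g=\Ker(e)$ gives $f=\Ker(e\circ j)$. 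Taking $d=\Coker(j)$ and $e=\Coker(g)$, this exhibits $i$ and $f$ as kernels, hence as admissible monics by the first step. Therefore $(\mathcal{A},\E_{max})$ satisfies (AI) and is an AI-category.

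I do not expect a genuine obstacle here. The only non-formal ingredient is the identification of admissible monics with kernels, which is immediate from the equality $\E_{all}=\E_{sta}$, i.e.\ from the very definition of quasi-abelian (and Lemma \ref{qa 1}). After that, verifying (AI) is a one-line appeal to base-change stability of kernels, which holds in any category possessing the relevant pullbacks. In essence the theorem repackages Proposition \ref{all}, Lemma \ref{qa 1} and Theorem \ref{quasi-nice is qa} into a single equivalence, the one new observation being that a pullback of a kernel along an arbitrary morphism is again a kernel; the only point requiring care is to note that this pullback agrees with the one prescribed in axiom (AI), which is automatic.
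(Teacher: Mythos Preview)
Your proof is correct and follows essentially the same route as the paper: the backward implication is Theorem \ref{quasi-nice is qa}, and for the forward implication both you and the paper identify the admissible monics of $\E_{max}=\E_{all}$ with kernels and then argue that in the pullback square the morphisms $i$ and $f$ are again kernels. The only difference is cosmetic: the paper cites the dual of \cite[Prop.~5.2]{Kelly} for this last step, whereas you supply the direct verification that $i=\Ker(d\circ g)$ when $j=\Ker(d)$, which is exactly the content of that citation.
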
 
\begin{proof}

$(\Longleftarrow)$\; By Theorem \ref{quasi-nice is qa} every AI-category is quasi-abelian. 

$(\Longrightarrow)$\; Let $\A$ be a quasi-abelian category. 
Endowing it with the class $\E$ of all kernel-cokernel pairs in $\A$ yields an exact category $(\A,\E)$ as $\A$ is quasi-abelian; see \cite[Rmk.\ 1.1.11]{Sch}. The class of admissible monomorphisms in $(\A,\E)$ is thus precisely the class of kernels in $\A$. Let $c\colon B \rightarrowtail D$ and $d\colon C \rightarrowtail D$ be arbitrary admissible monomorphisms in $(\A,\E)$, i.e.\ $c,d$ are kernels. Then in the pullback diagram 
\[
\begin{tikzcd}
A\arrow{r}{a}\arrow{d}[swap]{b}\arrow[dr, "\ulcorner" near start, phantom] & B \arrow[tail]{d}{c}&\\
C \arrow[tail]{r}[swap]{d} & D 
\end{tikzcd}
\]
the morphisms $a$ and $b$ are also kernels in $\A$ by the dual of Kelly \cite[Prop.\ 5.2]{Kelly}. That is, $a,b$ are admissible monomorphisms, and we see that $(\A, \E)$ has admissible intersections.

\end{proof}

%%%%%%%%%%%%%%%%%%%%%%%%%%%%%%%%%%%%%%%%%
\subsection{AIS-categories and  abelian categories}
In this subsection we prove that the categories satisfying both the (AI) and the (AS) axioms are exactly the abelian categories.

\begin{proposition}\label{P is lattice}
 Let $X$ be an object in an  AIS-category $(\A, \E)$. Then the poset $\mathcal{P}^\E_X$ forms a lattice under the operations
\[(\mathcal{P}^\E_X,\leq, {\cap}_X,  {+}_X)\] where $\cap_X$ and ${+}_X$ are the intersection and sum operations defined in Definition \ref{intersection & sum}.
\end{proposition}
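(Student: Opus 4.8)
The goal is to show that $(\mathcal{P}^\E_X, \leq, \cap_X, +_X)$ is a lattice, so I need to verify the three lattice axioms of Definition \ref{lattice}: associativity and commutativity of $\cap_X$, associativity and commutativity of $+_X$, and the absorption laws $Y \vee (Y \wedge Z) = Y = Y \wedge (Y \vee Z)$. Before any of this, I must check that $\cap_X$ and $+_X$ are well-defined binary operations $\mathcal{P}^\E_X \times \mathcal{P}^\E_X \to \mathcal{P}^\E_X$: since $(\A,\E)$ is an AIS-category, the pullback of the two admissible monics $i_1, i_2$ exists and yields admissible monics $s_1, s_2$ (axiom (AI)), so $X_1 \cap_X X_2$ is genuinely an $\E$-subobject of $X_1$ (hence of $X$ by composition/axiom (A1)); dually the pushout exists and the induced map $u: X_1 +_X X_2 \to X$ is an admissible monic by axiom (AS), so $X_1 +_X X_2 \in \mathcal{P}^\E_X$. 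Independence of the choice of representatives follows from the universal properties of pullback and pushout.

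The cleanest route is to identify $\cap_X$ with the meet (infimum) and $+_X$ with the join (supremum) in the poset $(\mathcal{P}^\E_X, \leq)$; once that is done, all the lattice axioms are automatic, since meets and joins in any poset are automatically commutative, associative (when they exist), and satisfy absorption. So the plan is: first show $X_1 \cap_X X_2 = \inf\{X_1, X_2\}$ in $\mathcal{P}^\E_X$. By construction $X_1 \cap_X X_2$ maps via admissible monics $s_1, s_2$ into $X_1$ and $X_2$ compatibly with the inclusions into $X$, so it is a lower bound. For any other lower bound $(W, w)$ with admissible monics $W \rightarrowtail X_1$ and $W \rightarrowtail X_2$ agreeing on the nose into $X$, the universal property of the pullback in $\A$ produces a unique morphism $W \to X_1 \cap_X X_2$; this morphism is then an admissible monic by Proposition \ref{obscure axiom} (the "obscure axiom"), since composing it with the admissible monic $s_1$ gives the admissible monic $W \rightarrowtail X_1$ — so $W \leq X_1 \cap_X X_2$ in $\mathcal{P}^\E_X$. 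Dually, show $X_1 +_X X_2 = \sup\{X_1,X_2\}$ using the universal property of the pushout together with the fact that $u$ is an admissible monic and again Proposition \ref{obscure axiom} to upgrade the induced comparison morphism to an admissible monic.

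Finally I assemble the pieces: commutativity of $\cap_X$ and $+_X$ is immediate from symmetry of pullback/pushout (or from the fact that inf/sup of a two-element set is symmetric); associativity of each follows because iterated meets (resp. joins) of a finite subset of a poset agree whenever all the relevant pairwise meets (resp. joins) exist — which they do here, since $\mathcal{P}^\E_X$ is closed under $\cap_X$ and $+_X$; and the absorption laws follow from $X_1 \leq X_2 \Rightarrow X_1 \cap_X X_2 = X_1$ and $X_1 \leq X_2 \Rightarrow X_1 +_X X_2 = X_2$, specialized to $Y \leq Y \vee Z$ and $Y \wedge Z \leq Y$. The main obstacle is the well-definedness and the identification of $\cap_X, +_X$ with poset meet and join — specifically, making sure that the comparison morphisms coming from the universal properties in the ambient pre-abelian category $\A$ are actually admissible monics in $(\A,\E)$; this is exactly where axioms (AI), (AS), and the obscure axiom (Proposition \ref{obscure axiom}) are needed, and once this is in place the lattice axioms are formal. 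I would also remark that Remark \ref{HR}(a), giving $X_1 \cap_X X_1 = X_1 = X_1 +_X X_1$, recovers the idempotency noted after Definition \ref{lattice}.
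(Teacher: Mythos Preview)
Your proof is correct, but it follows a genuinely different route from the paper's. You argue \emph{semantically}: you show directly that $\cap_X$ and $+_X$ compute the infimum and supremum in $\mathcal{P}^\E_X$, using the universal properties of pullback and pushout together with the obscure axiom (Proposition~\ref{obscure axiom}) to upgrade the comparison maps to admissible monics. Once that identification is made, the lattice axioms are automatic. The paper instead verifies the axioms of Definition~\ref{lattice} \emph{syntactically}: it invokes Remark~\ref{sum intersection as coker ker} to get commutativity and associativity of $\cap_X$ and $+_X$, and then proves the absorption law $Y + (Y\cap Z) = Y$ by combining the monotonicity of the sum (Remark~\ref{HR}(d)) with the idempotency $Y+Y = Y$ (Remark~\ref{HR}(a)) to produce mutual admissible monics $Y \rightarrowtail Y+(Y\cap Z) \rightarrowtail Y$, forcing both to be isomorphisms.

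Your approach has the advantage of being more conceptual and of yielding the extra information that $\cap_X, +_X$ really are the poset meet and join; it also makes associativity transparent, whereas the paper's appeal to Remark~\ref{sum intersection as coker ker} for associativity is somewhat terse. The paper's approach, on the other hand, stays closer to the explicit constructions and leverages the monotonicity properties already recorded in Remark~\ref{HR}, avoiding any direct use of the obscure axiom. Both arguments depend on $\A$ being pre-abelian (so that cokernels, and hence the hypothesis of Proposition~\ref{obscure axiom}, are available) and on the axioms (AI) and (AS) to ensure the operations land back in $\mathcal{P}^\E_X$.
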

\begin{proof}
We need to verify the axioms of Definition \ref{lattice}.
 The first and second axioms follow directly from Remark \ref{sum intersection as coker ker}.  For the third axiom, we have to show  
\[ Y+(Y \cap Z) = Y = Y \cap (Y+Z)
\]
for $\E-$subobjects $Y,Z$ of $X$. We give the proof of the first equality here, the second one being similar:
By axiom (AI), we know that there is an admissible monic 
$Y \cap Z \rightarrowtail Y.$ Remark \ref{HR} applied to this inclusion and the object $Y$ yields
\[ (Y \cap Z)+Y \rightarrowtail Y+Y.
\]
Since $Y+Y=Y$ by Remark \ref{HR}, we have an admissible monic 
$Y+(Y \cap Z) \rightarrowtail Y$. But by the axiom (AS), there is also an admissible monic from $Y$ into the sum of $Y$ with $Y \cap Z$, therefore we have 
\[ Y \rightarrowtail (Y \cap Z)+Y \rightarrowtail Y.
\]
This shows that the monics are isomorphisms, hence equalities  in the poset $\P^\E_X.$
\end{proof}

\begin{lemma}\cite[Corollary 4.11]{HR}\label{abelian is nice}
Let $\A$ be an abelian category. Then $(\A,\E_{all})$ is an AIS-category.
\end{lemma}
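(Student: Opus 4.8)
We want to show that an abelian category $\A$, endowed with its maximal exact structure $\E_{all}$ (which by Lemma \ref{qa 1} is indeed an exact structure), is an AIS-category. There are two axioms to check: (AI), that pullbacks of admissible monics along admissible monics exist and consist of admissible monics; and (AS), that the induced map $u$ from the pushout of such a pullback square to the original target is an admissible monic. Since we are in $\E_{all}$, ``admissible monic'' means simply ``monomorphism'' and ``admissible epic'' means ``epimorphism''; this is the crucial simplification that makes the argument go through, and I would state it at the outset.

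\textbf{Verifying (AI).} An abelian category is in particular pre-abelian, so all pullbacks exist; it remains to see that in a pullback square with $j : C \rightarrowtail D$ and $g : B \rightarrowtail D$ monomorphisms, the two legs $i : A \to B$ and $f : A \to C$ are again monomorphisms. This is the standard fact that monomorphisms are stable under pullback in any abelian category (indeed in any category with the relevant limits): if $g$ is mono then its pullback $f$ along $j$ is mono, and symmetrically for $i$. I would either cite this as elementary or give the one-line diagram-chase argument via the universal property of the pullback (two maps into $A$ equalised after composing with $i$ are equalised by $g\circ i = j \circ f$ applied suitably). Since every monomorphism in an abelian category is a kernel, $i$ and $f$ are admissible monics, so (AI) holds.

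\textbf{Verifying (AS).} Form the pushout $E$ of $i$ and $f$. In an abelian category, the pushout of a square whose sides are monomorphisms into $D$ has the property that the canonical map $u : E \to D$ is again a monomorphism — this is precisely the statement that, for subobjects $B$ and $C$ of $D$, the pushout of $B \leftarrow B\cap C \to C$ computes the \emph{union} $B + C$ as a subobject of $D$. Concretely, one can identify $E$ with $\Coker(A \xrightarrow{[s_1, -s_2]^t} B \oplus C)$ as in Remark \ref{sum intersection as coker ker}, and then $u$ is induced by $[g, -j] : B \oplus C \to D$; checking that its kernel is trivial amounts to the standard computation that $(g\,b = j\,c)$ forces $(b,c)$ to come from $A$, which holds because $A$ is the pullback (intersection). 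Since $u$ is then a monomorphism in an abelian category it is a kernel, hence an admissible monic in $\E_{all}$, giving (AS). Combining the two, $(\A, \E_{all})$ is an AIS-category.

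\textbf{Main obstacle.} There is no deep obstacle here — the content is entirely that abelian categories behave well — so the only real care needed is bookkeeping: making sure that ``admissible monic'' is correctly unwound to ``monomorphism = kernel'' under $\E_{all}$ (using Lemma \ref{qa 1}), and that the pushout appearing in axiom (AS) is literally the abelian pushout, so that the classical statement ``the pushout over an intersection of two subobjects injects into the ambient object'' applies verbatim. If one prefers to avoid element-free diagram chases, the cleanest route is the $\Coker$/$\Ker$ description from Remark \ref{sum intersection as coker ker}, which reduces everything to exactness of the four-term sequence $0 \to A \to B\oplus C \to D$, a routine consequence of $A$ being the pullback.
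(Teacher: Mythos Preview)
The paper does not actually supply a proof of this lemma; it simply cites \cite[Corollary 4.11]{HR}. Your argument is correct and is exactly the standard one: in $(\A,\E_{all})$ admissible monics are precisely monomorphisms (equivalently kernels), so (AI) reduces to stability of monos under pullback, and (AS) reduces to the classical fact that for subobjects $B,C$ of $D$ with intersection $A$, the sequence $0 \to A \to B\oplus C \to D$ is exact at the middle, whence the induced map from the pushout $E \cong (B\oplus C)/A$ into $D$ is a monomorphism. The only cosmetic point is to be consistent with signs when matching your $[g,-j]$ to the paper's conventions in Remark~\ref{sum intersection as coker ker}, but this does not affect the argument.
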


\begin{lemma} \label{mono = kernel}
Let $\A$ be a quasi-abelian category and $\E= \E_{all}$. Suppose that every monomorphism in $\A$ is a kernel, then $\A$ is abelian. Dually, if every epimorphism is a cokernel, then $\A$ is abelian.
\end{lemma}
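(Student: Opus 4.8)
The plan is to show that in a quasi-abelian category $\A$ with $\E = \E_{all}$, if every monomorphism is a kernel, then every epimorphism is automatically a cokernel (and vice versa), and then invoke the standard characterisation that a pre-abelian category in which every monomorphism is a kernel and every epimorphism is a cokernel is abelian. So the statement splits into two halves, and by the duality remarked after Definition \ref{AIS} it suffices to treat one half carefully; I would do the "monomorphism is a kernel $\Rightarrow$ abelian" direction and then dualise.

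\medskip

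First I would recall that, since $\A$ is quasi-abelian, it is in particular pre-abelian, so every morphism $f\colon X\to Y$ has a kernel, cokernel, image and coimage, and factors canonically as $X \twoheadrightarrow \Coim f \xrightarrow{\bar f} \Im f \rightarrowtail Y$ with the outer maps being the canonical epi onto the coimage and mono from the image; abelianness is equivalent to $\bar f$ being an isomorphism for every $f$ (i.e.\ every morphism strict). The map $\bar f\colon \Coim f \to \Im f$ is always both a monomorphism and an epimorphism: it is monic because $X \twoheadrightarrow \Coim f$ is epic and the composite $X \to \Im f$ has the same image, and epic because $\Im f \rightarrowtail Y$ is monic and $\Coim f \to Y$ has the same coimage — these are formal consequences of the universal properties in any pre-abelian category. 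Now under the hypothesis "every monomorphism is a kernel", $\bar f$ is a kernel, say $\bar f = \ker(g)$ for some $g$; but $\bar f$ is also epic, and an epic kernel is an isomorphism (if $\bar f = \ker g$ is epic then $g\bar f = 0$ forces $g = 0$, so $\ker g = 1$, hence $\bar f$ is iso). Therefore every morphism is strict and $\A$ is abelian. The dual argument: if every epimorphism is a cokernel, then $\bar f$, being a monic cokernel, is again an isomorphism by the dual of the last observation.

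\medskip

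Alternatively — and this may be cleaner to write — I would use the classical fact (e.g.\ from Bucur--Deleanu or Popescu) that a pre-abelian category is abelian iff every monomorphism is a kernel \emph{and} every epimorphism is a cokernel, and then only need to supply the missing half. In a quasi-abelian category the two conditions are actually equivalent: quasi-abelian means every pullback of an epimorphism (equivalently, of a strict epi, since all epis in the $\E_{all}$ picture... ) — more precisely, one uses that in a quasi-abelian category strict monos are stable under pushout and strict epis under pullback, together with the observation that "every mono is a kernel" means every mono is strict, which by a duality/stability argument forces every epi to be strict as well. I would cite \cite[Rmk.\ 1.1.11]{Sch} or the surrounding discussion in \cite{SW11} for the precise form needed, rather than reprove it.

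\medskip

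\textbf{Main obstacle.} The only delicate point is the passage from "every monomorphism is a kernel" to "every epimorphism is a cokernel" within the quasi-abelian setting; the purely formal argument via $\bar f$ that I sketched first actually sidesteps this by handling each morphism directly, so I expect the write-up to be short. The risk is only in getting the elementary lemma "an epic kernel (resp.\ monic cokernel) is an isomorphism" stated and used correctly, and in making sure the canonical factorisation $X \twoheadrightarrow \Coim f \to \Im f \rightarrowtail Y$ is set up so that $\bar f$ is visibly both mono and epi — both are standard in any pre-abelian category, so no real difficulty is anticipated.
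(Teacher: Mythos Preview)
Your argument is correct and close in spirit to the paper's, but the final step differs. Both you and the paper set up the canonical factorisation $X \xrightarrow{c} \Coim f \xrightarrow{\bar f} \Im f \xrightarrow{i} Y$ and use that $\bar f$ is both monic and epic. From there the paper observes that the composite $i\bar f$ is a monomorphism, hence a kernel by hypothesis, hence an admissible monic in $\E_{all}$; since $c$ is already an admissible epic, $f = (i\bar f)c$ is admissible, and abelianness follows from Freyd's Lemma~\ref{F66}. You instead apply the hypothesis to $\bar f$ itself: being a monomorphism it is a kernel, and an epic kernel is an isomorphism, so every morphism is strict and $\A$ is abelian. Your route is marginally more self-contained (it avoids the appeal to Lemma~\ref{F66}), while the paper's fits its running theme of characterising abelianness via admissibility of all morphisms.

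One small correction: the fact that $\bar f$ is always both monic and epic is \emph{not} standard in an arbitrary pre-abelian category---this is precisely the content of being semi-abelian in the sense of Palamodov, and it is a consequence of the quasi-abelian hypothesis (the paper cites \cite[Section 1]{Ru01} for this). Your sketched justification (``the composite $X\to\Im f$ has the same image'') does not establish it in general. Since the lemma assumes $\A$ quasi-abelian this does not affect correctness, but you should attribute the mono--epi property of $\bar f$ to that hypothesis rather than to pre-abelianness.
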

\begin{proof}
Let $f: X \to Y$ be an arbitrary morphism in $\A$ we will show that $f$ is admissible, whence it follows that $\A$ is abelian by {Lemma \ref{F66}}. Recall that there is a commutative diagram in $\A$
\[ \begin{tikzcd}
 \Ker f \arrow[d, tail] & \Coker f \\ X \arrow[r, "f"] \arrow[d, "c", two heads] & Y \arrow[u, two heads] \\ \Coim f \arrow[r, "\bar{f}"] & \Im f \arrow[u, "i", tail] 
\end{tikzcd}\] where $\bar{f}$ is both monic and epic (see \cite[Section 1]{Ru01} for details) and the columns are $\E$-sequences since $\A$ is quasi-abelian and $\E$ = $\E_{all}$. By assumption, the composition $i\bar{f}$ is a kernel and therefore an admissible monic since $\A$ is quasi-abelian. Thus the decomposition  {$f=(i\bar{f})c$} shows that $f$ is admissible. The proof of the dual statement is similar. 
\end{proof}

\begin{proposition}\label{nice is abelian}
Let $(\A, \E)$ be an AIS-category. Then $\A$ is abelian and $\E= \E_{all}$.
\end{proposition}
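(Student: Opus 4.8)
The plan is to reduce the statement to Lemma \ref{mono = kernel} and then push everything through axiom (AS). First I would observe that, being an AIS-category, $(\A,\E)$ is in particular an AI-category, so Theorem \ref{quasi-nice is qa} gives that $\A$ is quasi-abelian and Proposition \ref{all} gives $\E = \E_{all}$; this already settles the ``$\E = \E_{all}$'' half of the claim, and by Lemma \ref{mono = kernel} it remains only to prove that \emph{every monomorphism of $\A$ is a kernel}, i.e.\ an admissible monic.

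So let $m \colon Y \to X$ be an arbitrary monomorphism; I would exhibit $m$ as an admissible monic by feeding its graph into axiom (AS). Consider the two sections $g = \bsm 1 \\ m \esm$ and $j = \bsm 1 \\ 0 \esm$ from $Y$ into $Y \oplus X$, which are admissible monics since the split exact structure is contained in $\E$. A direct computation on representable functors shows that their pullback is the zero object — this is exactly where monicity of $m$ enters. Then Lemma \ref{zero intersection} (or, equivalently, unwinding axiom (AS) from Definition \ref{AIS}) shows that the associated sum is $Y \oplus Y$ and that the induced morphism is $u = \bsm 1 & 1 \\ m & 0 \esm \colon Y \oplus Y \to Y \oplus X$, and axiom (AS) says this $u$ is an admissible monic. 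Precomposing with the section $\bsm 1 \\ -1 \esm \colon Y \to Y \oplus Y$ and using closure of admissible monics under composition, I get that $\bsm 0 \\ m \esm \colon Y \to Y \oplus X$ is an admissible monic. Since $\bsm 0 \\ m \esm = \bsm 0 \\ 1 \esm \circ m$ and $m$ has a cokernel ($\A$ being pre-abelian), the obscure axiom (Proposition \ref{obscure axiom}) then forces $m$ to be an admissible monic. Hence every monomorphism of $\A$ is a kernel, and Lemma \ref{mono = kernel} concludes that $\A$ is abelian.

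The hard part is the last extraction step: up to automorphisms of the source and target, $u$ is $1_Y \oplus (-m)$, but a direct summand of an admissible monic need not be an admissible monic in a general exact category, so one cannot simply cancel the identity factor. The workaround is to isolate the honest admissible monic $\bsm 0 \\ m \esm$, through which $m$ factors, and then invoke the obscure axiom (Proposition \ref{obscure axiom}), which is precisely what detects $m$ as an admissible monic from such a factorisation. The remaining ingredients — computing that the pullback is $0$, writing down $u$, and composing admissible monics — are routine.
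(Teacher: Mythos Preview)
Your proof is correct and follows essentially the same route as the paper: reduce to Lemma \ref{mono = kernel} via Theorem \ref{quasi-nice is qa}, compute that the pullback of the two sections $\bsm 1 \\ m \esm$ and $\bsm 1 \\ 0 \esm$ is zero, and use axiom (AS) to make the induced map $u$ an admissible monic. The only divergence is in the very last step. The paper chooses the pushout structure maps $\bsm 1 \\ 1 \esm$ and $\bsm 1 \\ 0 \esm$ (rather than the canonical inclusions), so that $u$ comes out diagonal, $u = \bsm 1 & 0 \\ 0 & m \esm$, and then invokes \cite[Corollary 2.18]{Bu} to conclude that $m$ is an admissible monic directly. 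Your route via $\bsm 0 \\ m \esm$ and the obscure axiom works just as well.

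One correction to your commentary: your claim that ``a direct summand of an admissible monic need not be an admissible monic in a general exact category'' is false. This is precisely what \cite[Corollary 2.18]{Bu} guarantees in any exact category, and it is exactly what the paper uses. So once you observe that $u$ is, up to automorphisms of source and target, $1_Y \oplus m$, you are already done without the detour through the obscure axiom.
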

\begin{proof}
By Theorem \ref{quasi-nice is qa}, $\A$ is quasi-abelian and $\E= \E_{all}$. Thus, by Lemma \ref{mono = kernel}, it is enough to show that every monomorphism $f:X \to Y$ in $\A$ is a kernel. To this end, consider the $\E$-subobjects given by two sections
\begin{align*}
    \bsm 1 \\ f \esm: &X \rightarrowtail X \oplus Y \\ \bsm 1 \\ 0 \esm: & X \rightarrowtail X \oplus Y.
\end{align*}
By computation their intersection is the zero-object
\[ \begin{tikzcd}
  0 \arrow[r, tail] \arrow[d, tail] & X \arrow[d, "{\bsm 1 \\ f \esm}", tail] \\ X \arrow[r, tail, "{\bsm 1 \\ 0 \esm}"'] & X \oplus Y. \arrow[ul, phantom, "\lrcorner", near start] 
\end{tikzcd}\] Thus, by Lemma \ref{zero intersection}, their sum is given by the direct sum $X \oplus X$
\[\begin{tikzcd}
 0 \arrow[r, tail] \arrow[d, tail] & X \arrow[d, "{\bsm 1 \\ 1 \esm}", tail] \arrow[ddr, tail, bend left, "{\bsm 1 \\ f \esm}"] & \\ X \arrow[r, tail, "{\bsm 1 \\ 0 \esm}"'] \arrow[drr, tail, bend right, "{\bsm 1 \\ 0 \esm}"'] & X \oplus X  \arrow[ul, phantom, "\ulcorner", near end] \arrow[dr, "u", tail] & \\ & & X \oplus Y 
\end{tikzcd}\] where $u = \bsm 1 & 0 \\ 0 & f \esm$ is an admissible monic since $(\A, \E)$ is (AIS). Now, by \cite[Corollary 2.18]{Bu}, $f$ is an admissible monic and we are done.  
\end{proof}
\begin{theorem}\label{abelian 2}
An exact category $(\A, \E)$ is an AIS-category if and only if $\A$ is abelian and $\E=\E_{all}$.
\end{theorem}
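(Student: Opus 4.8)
The plan is to prove Theorem \ref{abelian 2} by showing the two implications separately, using the machinery already assembled in this section. The statement is that $(\A,\E)$ is an AIS-category if and only if $\A$ is abelian and $\E = \E_{all}$.

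For the forward direction, suppose $(\A,\E)$ is an AIS-category. This is exactly the content of Proposition \ref{nice is abelian}, which states that every AIS-category $\A$ is abelian with $\E = \E_{all}$. So this implication requires nothing new; I would simply cite Proposition \ref{nice is abelian}.

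For the converse direction, suppose $\A$ is abelian and $\E = \E_{all}$. First, I would invoke Lemma \ref{abelian is nice} (equivalently \cite[Corollary 4.11]{HR}), which says precisely that $(\A, \E_{all})$ is an AIS-category when $\A$ is abelian. Since in an abelian category $\E_{all} = \E_{max}$ coincides with the unique maximal exact structure (Lemma \ref{qa 1}), there is no ambiguity about which exact structure is meant, and the conclusion follows immediately.

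Since both directions reduce to results already established, I expect no genuine obstacle here; the theorem is essentially a packaging of Proposition \ref{nice is abelian} and Lemma \ref{abelian is nice}. The only point requiring a moment of care is making sure that the exact structure $\E$ appearing in the AIS hypothesis is forced to be $\E_{all}$ — but this is handled by Proposition \ref{all} (an AI-category has $\E = \E_{all}$), which is subsumed in Proposition \ref{nice is abelian}. Thus the proof is short:

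\begin{proof}
If $(\A, \E)$ is an AIS-category, then by Proposition \ref{nice is abelian} the category $\A$ is abelian and $\E = \E_{all}$. Conversely, if $\A$ is abelian and $\E = \E_{all}$, then $(\A, \E_{all})$ is an AIS-category by Lemma \ref{abelian is nice}.
\end{proof}
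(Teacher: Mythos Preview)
Your proposal is correct and matches the paper's own proof, which simply cites Lemma \ref{abelian is nice} and Proposition \ref{nice is abelian} for the two directions. There is nothing to add.
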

\begin{proof}
By using Lemma \ref{abelian is nice} and Proposition \ref{nice is abelian}.
\end{proof}
Now that we know that the AIS-categories are preciesly the abelian ones with their maximal exact structure, let us recall the second isomorphism theorem for an abelian category with its maximal exact structure, but using the language of exact categories where the intersection and sum defined in Definition \ref{intersection & sum} by {pullbacks and pushouts} are always admissible. This result will be useful later to prove that the abelian categories are diamond exact categories in the sense of Definition \ref{Diamond} (we refer the reader to \cite[Lemma 5.2]{HR} for the proof):

\begin{proposition} ({\bf{The second  $\E-$isomorphism theorem}})\label{parallelo} 
Assume that $(\A,\E)$ is an AIS-exact category. Let $X$ and 
$Y$ be  $\E-$subobjects of an object $Z$ in $\A$. 
Then there is the following  short exact sequence:
\[X{\cap}_{Z}Y\rightarrowtail Y\twoheadrightarrow (X{+}_Z Y)/X.\]
In other words, there is an isomorphism ({parallelogramm} identity):
\[ Y / (X{\cap}_{Z}Y) \cong (X{+}_Z Y)/X.\]
\end{proposition}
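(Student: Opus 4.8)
The plan is to build the claimed sequence out of the standard exact-category fact that the pushout of a short exact sequence along an arbitrary morphism is again a short exact sequence, with the \emph{same} cokernel, together with a comparison morphism of short exact sequences linking the two. Write $W := X\cap_Z Y$ for the pullback of Definition~\ref{intersection & sum}, with projections $s_X\colon W\rightarrowtail X$ and $s_Y\colon W\rightarrowtail Y$ (admissible monics by the $(AI)$ axiom), and $S := X+_Z Y$ for the pushout of the span $(s_X,s_Y)$, with structural maps $j_X\colon X\rightarrowtail S$ and $j_Y\colon Y\rightarrowtail S$. First I would record that $j_X$ is an admissible monic: since $S$ is by construction the pushout of the admissible monic $s_Y$ along $s_X$, this is precisely axiom $(A2)$, so that $(X+_Z Y)/X := \Coker j_X$ is well defined. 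One could instead short-circuit everything by invoking Theorem~\ref{abelian 2} to identify $\A$ with an abelian category in which $\cap_Z$ and $+_Z$ are the ordinary intersection and sum, and then quote the classical second isomorphism theorem; but I would prefer the argument internal to the exact structure, as in \cite[Lemma 5.2]{HR}.

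Next I would regard $S$ as the pushout of the short exact sequence $W \overset{s_Y}{\rightarrowtail} Y \twoheadrightarrow Y/W$ along the morphism $s_X\colon W\to X$. By the pushout-of-a-sequence property of exact categories (see \cite[Proposition 2.12]{Bu}) this produces a commutative diagram with exact rows, the top row being $W \rightarrowtail Y \twoheadrightarrow Y/W$ and the bottom row $X \overset{j_X}{\rightarrowtail} S \overset{\pi}{\twoheadrightarrow} Y/W$, with left vertical $s_X$, middle vertical $j_Y$, and right vertical the identity of $Y/W$. In particular $\pi$ is, up to this canonical identification, the cokernel of $j_X$, which already yields the parallelogram identity $(X+_Z Y)/X \cong Y/(X\cap_Z Y)$; and commutativity of the right-hand square says that $\pi\circ j_Y$ is exactly the quotient map $Y\twoheadrightarrow Y/W$.

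Finally I would read off the asserted sequence: the composite $Y \overset{j_Y}{\rightarrowtail} S \twoheadrightarrow (X+_Z Y)/X$ equals $\pi\circ j_Y$, hence is the admissible epic $Y\twoheadrightarrow Y/W$, whose kernel is precisely $s_Y\colon X\cap_Z Y\rightarrowtail Y$; this is exactly the short exact sequence $X\cap_Z Y\rightarrowtail Y\twoheadrightarrow (X+_Z Y)/X$, and taking its cokernel recovers the parallelogram isomorphism. I do not expect a genuine obstacle here: the only delicate point is the bookkeeping that identifies $\Coker j_X$ with $Y/W$ \emph{compatibly} with $j_Y$ and the quotient $Y\to Y/W$, and this compatibility is exactly the content of the morphism of short exact sequences produced by the pushout. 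It is worth remarking that the $(AS)$ axiom is never used, so the proposition in fact holds in any pre-abelian exact category with admissible intersections; the AIS hypothesis serves only to make the later applications (e.g.\ that abelian categories are diamond in the sense of Definition~\ref{Diamond}) land in the right setting.
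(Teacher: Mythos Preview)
Your argument is correct. The paper itself does not supply a proof of this proposition; it simply refers the reader to \cite[Lemma 5.2]{HR}, so there is no in-paper proof to compare against. What you wrote is the natural argument and is almost certainly what lies behind that citation: form the pushout square of the admissible monic $s_Y\colon W\rightarrowtail Y$ along $s_X$, apply \cite[Proposition 2.12]{Bu} to obtain the bicartesian square with exact rows
\[
\begin{tikzcd}
W \arrow[r, tail, "s_Y"] \arrow[d, "s_X"'] & Y \arrow[d, "j_Y"] \arrow[r, two heads] & Y/W \arrow[d, equal] \\
X \arrow[r, tail, "j_X"'] & S \arrow[r, two heads, "\pi"'] & Y/W
\end{tikzcd}
\]
and read off both the parallelogram identity and the claimed sequence. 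Your bookkeeping about the compatibility of $\pi$ with $j_Y$ and the quotient $Y\twoheadrightarrow Y/W$ is exactly what the morphism of short exact sequences provides, so there is no gap.

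Your closing remark is also correct and is a genuine sharpening: the proof uses only that $s_X,s_Y$ are admissible monics (axiom $(AI)$) and axiom $(A2)$ to make $j_X$ an admissible monic with the stated cokernel; the $(AS)$ axiom, which concerns the induced map $u\colon S\to Z$, plays no role. Hence the proposition already holds for AI-categories, i.e.\ by Theorem~\ref{HSW 6.1} for quasi-abelian categories with $\E_{max}$.
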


%%%%%%%%%%%%%%%%%%%%%%%%%%%%%%%%%%
%%%%%%%%%%%%%%

%%%%%%%%%%%%%%%%%%%%%%%%%%%%%%%%%%%
\section{The diamond exact categories}
%%%%%%%%%%%%%%%%%%%%%%%%%%%%%%%%
In this section we address the drawbacks of the intersection and sum in the previous section by introducing a general notion of intersection and sum that applies to exact categories. We then use this to introduce a class of exact categories - the diamond exact categories - and show that these satisfy the $\E-$Jordan-H\"older property as in Definition \ref{composition series}. %We assume all exact categories to be $\E-$finite.

\subsection{Jordan-H\"older property}
\begin{definition}\label{composition series}
Let $(\A, \E)$ be an exact category. A finite $\mathcal{E}-$composition series for an object $X$ of $\mathcal{A}$ is a sequence 
\begin{equation}\label{chain1}
    \begin{tikzcd}[sep=large] 0= X_{0} \arrow[r, tail, "i_0"] &  X_1 \arrow[r, tail, "i_1"] & \dots \arrow[r, tail, "i_{n-2}"] &  X_{n-1} \arrow[r, tail, "i_{n-1}"] & X_n = X  \end{tikzcd}
\end{equation} 
where all $i_l$ are \emph{proper admissible monics} with $\E-$simple cokernel. 
We say an exact category $(\A,\E)$ has the {\em ($\E-$)Jordan-H\"older property} or is a {\em Jordan-H\"older exact category}
 if any two finite $\mathcal{E}-$composition series for an object $X$ of $\mathcal{A}$
\[ 
 \begin{tikzcd}[sep=large] 0= X_{0} \arrow[r, tail, "i_0"] &  X_1 \arrow[r, tail, "i_1"] & \dots \arrow[r, tail, "i_{m-2}"] &  X_{m-1} \arrow[r, tail, "i_{m-1}"] & X_m = X  \end{tikzcd}\] 
and
\[
 \begin{tikzcd}[sep=large] 0= X'_{0} \arrow[r, tail, "i'_0"] &  X'_1 \arrow[r, tail, "i'_1"] & \dots \arrow[r, tail, "i'_{n-2}"] &  X'_{n-1} \arrow[r, tail, "i'_{n-1}"] & X'_n = X  \end{tikzcd}\]
are equivalent, that is, they have the same length and the same composition factors, up to permutation and isomorphism. 
\end{definition}

\begin{remark} \label{niceJH} As shown in \cite{HR}, one can use the same steps as in \cite{Bau} and the $\E-$Schur lemma to prove that every AIS-category  $(\mathcal{A}, \mathcal{E})$  is a Jordan-H\"older exact category. %Of course this result follows from the fact that the AIS-categories are  the abelian ones.
\end{remark}

%%%%%%%%%%%%%%%%%%%%%%%%%%%%%%%%
\subsection{General intersection and sum}

For an AIS-category $(\A,\E)$, or equivalently, for an abelian category $\A$ with maximal exact structure $\E_{all}$, the intersection of two subobjects of $X$ is defined as the {pullback} of their monomorphisms in $X$ and their sum is defined as the {pushout of this pullback}, which is also admissible.
In terms of the poset $\P^\E_X$ of $\E-$subobjects of $X$,  this means that $\P^\E_X$ forms a lattice as shown in Proposition \ref{P is lattice}.
However, in general the poset $\P^\E_X$ is not a lattice, even when the $\E-$Jordan-H\"older property holds for the exact category $(\A,\E)$, as the following simple examples demonstrate.

\begin{example}\label{even dimensional spaces}
Let $\A$ be the category of all even dimensional $k$-vector spaces endowed with the split exact structure $\E=\E_{min}$. Then the $\E-$simple objects are precisely the two-dimensional vector spaces, and the Jordan-H\"older property is clearly satisfied. 
Consider the object $X=k^6$ with basis $\{  v_1,v_2,v_3,v_4,v_5,v_6 \}$ and the two elements of $\P^\E_X$ given by \[
V_1 = < v_1,v_2,v_3,v_4> \quad \mbox{and} \quad V_2 = <v_2,v_3,v_4,v_5>. 
\]
The intersection $V_1 \cap V_2\;$ in $\mod k$ is $V_3 =  <v_2,v_3,v_4>$. But since $V_3$ is not in $\A$, every two-dimensional subspace $U$ of $V_3$ is a maximal lower bound for both $V_1$  and $V_2$, when we view $(U, f)$  as an element in $\mathcal{P}^\E_X$ with its inclusion map $f$.
Therefore $\P^\E_X$ is not a lattice, and the intersection of $V_1$  and $V_2$ is not unique in $(\A,\E)$, in fact it is an infinite set formed by all embeddings $(U,f)$ of maximal proper subspaces $U$ of $V_3$. 
\end{example}

\begin{example}\label{example A2 again}
A similar phenomenon can be observed studying the additive category $\A=\rep A_2$ of representations of the quiver of type $A_2$, endowed with the minimal exact structure $\E=\E_{min}$.
We denote the simple representations by $S_1$ and $S_2$, and the indecomposable projective-injective representation by $P_1$. Then  there is a non-split indecomposable short exact sequence in $\A$
\[
\begin{tikzcd}
 0 \arrow[r] & S_2 \arrow[r, "f"] & P_1 \arrow[r, "g"] & S_1 \arrow[r] &0 
\end{tikzcd}\]
 which is not admissible in $\E_{min}$.
Therefore
$(\A , \E_{min})$ is not an AI-category   by Proposition \ref{all}.
Choosing the object $X=S_2 \oplus P_1 \oplus S_1,$ we observe that
there are many maximal $\E-$subobjects of $X$ with quotient $S_1$ given by $(S_2 \oplus P_1, \alpha_\lambda) $ with $\lambda \in k$, where
\[ \bsm
1 & 0 \\
0 & 1 \\
0 & \lambda g
\esm = \alpha_\lambda : S_2 \oplus P_1 \rightarrow  X = S_2 \oplus P_1 \oplus S_1.\]
Each of these admit many maximal $\E-$subobjects with quotient $P_1$ given by $(S_2 \oplus P_1, \beta_\mu )$ with $\mu \in k$ where
\[ \bsm
1  \\
\mu f
\esm = \beta_\mu : S_2  \rightarrow  S_2 \oplus P_1.\]
\end{example}
\bigskip

The preceding examples motivate the following definition, where we allow the (generalised) intersection and sum to be a set of objects:
\begin{definition}\label{general intersection and sum} {
Let $(A_i, f_i)$, $i \in I$, be a collection of $\E-$subobjects of $X$ indexed by a set $I$. We denote the set of all their common admissible subobjects with respect to $X$ as 
\[ \Sub_X (\{ (A_i, f_i) \mid i \in I\} ) := \{ (Y,h) \in P^\E_X\; \mid \;  (Y, h) \in \P^\E_{A_i}; \forall i \in I \}  \]
 and define the $\E-$relative intersection of the $(A_i, f_i)$ in  $\P^\E_X$ as 
\[\inter_X(\{ (A_i, f_i) \mid i \in I\} ) := \Max(\Sub_X(\{ (A_i, f_i) \mid i \in I\}),
\]
 the set of maximal elements in $\Sub_X(\{ (A_i, f_i) \mid i \in I\})$  (where we define the generalised intersection over the empty set to be $\{0\}$). Dually, we denote the set of all common superobjects of $A_i, i\in I$
 \[ \Sup_X(\{ (A_i, f_i) \mid i \in I\}) := \{ \;
(Y,h) \in \P^\E_X \; \mid \;  (A_i, f_i) \in \P^\E_Y, \; \forall i \in I
\} 
\]
and define the $\E-$relative sum of the $A_i, i\in I$ in $\P^\E_X$ as 
\[\Sum_X(\{ (A_i, f_i) \mid i \in I\}) := \Min(\Sup_X(\{ (A_i, f_i) \mid i \in I\})),
\]
the set of minimal elements in $\Sup_X(\{ (A_i, f_i) \mid i \in I\})$. }
\end{definition}
\begin{example}
In the setup of Example \ref{even dimensional spaces}, the objects $V_1$ and $V_2$ have as $\E-$relative intersection  in $\P^\E_X$ the Grassmannian
$\inter_X(V_1,V_2)=Gr(2,3)$ of  all maximal proper subspaces of $V_3$.
The set $\Sum_X(V_1,V_2)$ however consists only of the element $X$ itself.
In Example \ref{example A2 again}, any two of the objects $(S_2 \oplus P_1, \alpha_\lambda) $ have an infinite intersection containing all elements $(S_2, \beta_\mu) $ of $\P^\E_X$, and conversely, any two of the $(S_2, \beta_\mu) $ have an infinite sum containing all the  objects $(S_2 \oplus P_1, \alpha_\lambda) $.
\end{example}

%%%%%%%%%%%%%%%%%%%%%%%%%%%%%%%%
\subsection{The diamond categories are Jordan-H\"older exact categories}

In this section we prove the $\E-$Jordan-H\"older property in a more general context than abelian categories, namely for exact categories that we call \emph{the diamond exact categories}:
\begin{definition}({\bf{Diamond Axiom}})\label{Diamond}
Let $(A,f)$ and $(B,g)$ be two distinct maximal $\E-$sub-objects in $\P_X$, that is, their cokernels $X/A$ and $X/B$ are $\E-$simple. 
We say that $(A,f)$ and $(B,g)$ satisfy the {\em diamond axiom} if for every $Y \in \inter_X(A,B)$ we have that $A/Y$ and $B/Y$ are both $\E-$simple and {the elements of the sets
 $\{X/A, A/Y \}, \{ X/B, B/Y \}$ are equal up to permuation and isomorphism.}
\[ \begin{tikzcd} & A \arrow[dr, tail, "f"] & \\  Y \arrow[dr, tail ] \arrow[ur, tail] &  & X \\ & B \arrow[ur, tail, "g"'] &   \end{tikzcd}
\]
A \emph{diamond} exact category $(\A,\E)$ is an exact category that satisfies the diamond axiom for each pair of maximal subobjects $A$ and $B$ {of a fixed object $X$}.
\end{definition}

\begin{remark}\label{ab is diamond}
When $\A$ is an abelian category, then for each object $X$  we have that $\inter_X(A,B)$ and $\Sum_X(A,B)$ are given by the unique objects $A \cap_X B$ and $A +_X B$, respectively. Lemma \ref{parallelo} then ensures that the diamond axiom is always satisfied. We conclude that every abelian category is a diamond exact category.
\end{remark}
\begin{remark}
Note that Lemma \ref{parallelo} guarantees that we always have crosswise isomorphisms
\[  X/A \cong B/Y \quad \mbox{and} \quad X/B \cong A/Y\]
in the abelian case. However, Example \ref{example A2 again} shows that one can have the lengthwise isomorphisms \[  X/A \cong A/Y \quad \mbox{and} \quad X/B \cong B/Y\]
when the poset $\P^\E_X$ is not a lattice.
\end{remark}

\begin{lemma}\label{D2}
Assume that an object $X$ in a diamond exact category $\A$ has a composition series of length $n$
\[
\begin{tikzcd}0=B_0 \arrow[r, tail] & B_1 \arrow[r, tail] & \dots \arrow[r, tail]            & B_n=X. \end{tikzcd}\] 
If $(C,f)$ is a maximal element in $\P_X$ , then there exists a composition series of $X$ through $C$ of length $n$ :
\[\begin{tikzcd}0=C_0 \arrow[r, tail] & C_1 \arrow[r, tail] & \dots \arrow[r, tail]            & C_{n-2} \arrow[r, tail] & C \arrow[r, "f"] & X. \end{tikzcd}
\]
\end{lemma}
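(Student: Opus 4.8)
The plan is to argue by induction on the length $n$ of the given composition series, using the diamond axiom to "swap" the top of the series onto $C$. For $n=1$ the object $X$ is $\E$-simple, so its only maximal $\E$-subobject is $0$, and the statement is immediate. So suppose $n \geq 2$ and the result holds for all objects with a composition series of length $< n$. Let $(C,f)$ be a maximal $\E$-subobject of $X$, and consider the maximal $\E$-subobject $B_{n-1}$ appearing in the given series. If $B_{n-1} \cong C$ as $\E$-subobjects of $X$, we are done: the given series already passes through $C$. Otherwise $B_{n-1}$ and $C$ are two distinct maximal $\E$-subobjects of $X$, and we may invoke the diamond axiom for the pair $(B_{n-1}, g)$ and $(C,f)$.

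Pick any $Y \in \inter_X(B_{n-1}, C)$. The diamond axiom tells us that $B_{n-1}/Y$ and $C/Y$ are both $\E$-simple, and that $\{X/B_{n-1}, B_{n-1}/Y\}$ and $\{X/C, C/Y\}$ agree up to permutation and isomorphism; in particular $Y$ is a maximal $\E$-subobject of both $B_{n-1}$ and of $C$. Now I would apply the induction hypothesis first to $B_{n-1}$: it has a composition series of length $n-1$ (namely $0 = B_0 \rightarrowtail \cdots \rightarrowtail B_{n-1}$), and $Y$ is a maximal $\E$-subobject of $B_{n-1}$, so there is a composition series of $B_{n-1}$ through $Y$ of length $n-1$, say
\[
\begin{tikzcd}[sep=large] 0 = Y_0 \arrow[r, tail] & Y_1 \arrow[r, tail] & \dots \arrow[r, tail] & Y_{n-3} \arrow[r, tail] & Y \arrow[r, tail] & B_{n-1}. \end{tikzcd}
\]
Composing admissible monics (axiom (A1)) and using that the cokernels are unchanged, this extends to a composition series of $X$ of length $n$ passing through $Y$ and through $B_{n-1}$. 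Truncating at $Y$ gives a composition series of $Y$ of length $n-2$. Next I would apply the induction hypothesis, or simply the construction just given, to $C$: since $Y$ is a maximal $\E$-subobject of $C$ with $C/Y$ $\E$-simple, and $Y$ has a composition series of length $n-2$, prepending that series to the admissible monics $Y \rightarrowtail C \rightarrowtail X$ produces exactly a composition series of $X$ through $C$ of length $n$, which is what we wanted. (One checks that each step is a proper admissible monic with $\E$-simple cokernel: the steps below $Y$ come from the series for $Y$, the step $Y \rightarrowtail C$ has $\E$-simple cokernel $C/Y$ by the diamond axiom, and $C \rightarrowtail X$ has $\E$-simple cokernel $X/C$ since $C$ is maximal.)

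The main obstacle, and the place that needs care, is confirming that $\inter_X(B_{n-1}, C)$ is nonempty and that any $Y$ in it genuinely sits as an $\E$-subobject of both $B_{n-1}$ and $C$ with $\E$-simple cokernels there — i.e.\ that the diamond axiom is doing all the work we need. By the definition of $\inter_X$ as $\Max(\Sub_X(\{B_{n-1},C\}))$, a member $Y$ is a common admissible subobject of $B_{n-1}$ and $C$ inside $X$; non-emptiness is guaranteed because $0$ always lies in $\Sub_X$ and, since we are in an $\E$-finite / Noetherian-type situation coming from the existence of finite composition series, maximal elements exist. The diamond axiom then upgrades "$Y \subset_\E B_{n-1}$" to "$Y$ maximal in $B_{n-1}$" via $\E$-simplicity of $B_{n-1}/Y$, which is precisely the hypothesis that lets the induction close. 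A secondary subtlety is purely bookkeeping: that concatenations of the various composition series along the admissible monics $Y \rightarrowtail B_{n-1} \rightarrowtail X$ and $Y \rightarrowtail C \rightarrowtail X$ are again composition series of the stated lengths, which follows from closure of admissible monics under composition (axiom (A1)) together with Remark \ref{zero coker} identifying the $\E$-simple cokernels.
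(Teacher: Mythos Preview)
Your proof is correct and follows essentially the same route as the paper's: induction on $n$, handle the case $B_{n-1}=C$ separately, otherwise pick $Y\in\inter_X(B_{n-1},C)$, use the diamond axiom to see $Y$ is maximal in $B_{n-1}$, apply the induction hypothesis to $B_{n-1}$ to obtain a composition series through $Y$, and then swap $Y\rightarrowtail B_{n-1}$ for $Y\rightarrowtail C\rightarrowtail X$. Your added paragraph on nonemptiness of $\inter_X(B_{n-1},C)$ and on the bookkeeping of concatenation is a point the paper leaves implicit.
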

\begin{proof} By induction on $n.$
For $n=1$, this is obvious because $C=0$.
Assume now $n \ge 2$. If $B_{n-1} = C$ as elements in $\P^\E_X$, we can use the given composition series of $X$. Otherwise, consider an element $Y \in \inter_X(B_{n-1},C)$:
\[
\begin{tikzcd}
0=B_0 \arrow[r, tail] & \dots \arrow[r, tail]               & B_{n-1} \arrow[rd, tail] &   \\
                      & Y \arrow[ru, tail] \arrow[rd, tail] &                          & X \\
                      &                                     & C \arrow[ru, tail]       &  
\end{tikzcd}
\]
By the diamond axiom,  both quotients $B_{n-1}/Y$ and $C/Y$ are $\E-$simple since $B_{n-1}$ and $C$
 are maximal elements in $\P^\E_X$.
Thus we have a composition series of length $n-1$
\[\begin{tikzcd}0=B_0 \arrow[r, tail] & B_1 \arrow[r, tail] & \dots \arrow[r, tail]            & B_{n-1}=X' \end{tikzcd}
\]
and $Y$ is maximal in $\P^\E_{X'}.$
Induction hypothesis implies that  there exists a composition series of $X'$ through $Y$ of length $n-1$. 
Replacing $Y \rightarrowtail B_{n-1}$ in this series by $Y \rightarrowtail C \rightarrowtail X$ yields a composition series of $X$ through $C$ of length $n$:
\[\begin{tikzcd}0=Y_0 \arrow[r, tail] & \dots \arrow[r, tail] & Y_{n-3} \arrow[r, tail] & Y \arrow[r, tail]            &  C \arrow[r, "f"] & X. \end{tikzcd}
\] \end{proof}

\begin{theorem}\label{JH}
Every diamond exact category is a Jordan-H\"older exact category. 
\end{theorem}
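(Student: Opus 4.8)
The plan is to adapt the classical Jordan--H\"older argument, using Lemma \ref{D2} as the key refinement tool, and proceed by induction on the length of a composition series of the object $X$. The base cases $n=0$ and $n=1$ are immediate: if $X$ has a composition series of length $\le 1$ then $X$ is either zero or $\E$-simple, and in either case any composition series has the same length and factors. So suppose $X$ admits two finite $\E$-composition series
\[ 0= X_{0} \rightarrowtail X_1 \rightarrowtail \dots \rightarrowtail X_{m-1} \rightarrowtail X_m = X \]
and
\[ 0= X'_{0} \rightarrowtail X'_1 \rightarrowtail \dots \rightarrowtail X'_{n-1} \rightarrowtail X'_n = X, \]
and assume the theorem holds for all objects admitting a composition series of length strictly smaller than $\max(m,n)$.

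First I would treat the trivial subcase where $X_{m-1}$ and $X'_{n-1}$ coincide as $\E$-subobjects of $X$ (i.e.\ are isomorphic subobjects): then the two series agree in their top factor, and restricting both to $X_{m-1}\cong X'_{n-1}$ gives two composition series of an object of shorter length, to which the induction hypothesis applies directly, yielding $m=n$ and matching factors after permutation. The main case is when $A:=X_{m-1}$ and $B:=X'_{n-1}$ are distinct maximal $\E$-subobjects of $X$. Here I pick some $Y\in\inter_X(A,B)$ (nonempty since $\Max$ of a nonempty poset of subobjects is taken; note $0\in\Sub_X(A,B)$ so the set is nonempty, and one should remark why a maximal element exists --- this uses that $A$ is $\E$-Artinian/Noetherian, which follows from $A$ having a finite composition series by the length results, or more directly one simply invokes that $\inter_X(A,B)$ is defined to be this set of maximal elements and the diamond axiom presupposes its elements have the stated properties). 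By the diamond axiom, $A/Y$ and $B/Y$ are both $\E$-simple and $\{X/A, A/Y\}=\{X/B, B/Y\}$ up to isomorphism and permutation. Now apply Lemma \ref{D2} to $A$, which is a maximal element of $\P_X$: since $X$ has the composition series $0=X_0\rightarrowtail\dots\rightarrowtail X_m=X$ of length $m$, there is a composition series of $X$ through $A$ of length $m$; restricting to $A$ gives a composition series of $A$ of length $m-1$. Similarly, using the length-$n$ series and Lemma \ref{D2} applied to $B$, we get a composition series of $B$ of length $n-1$.

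Next I would build a ``connecting'' composition series of $X$ that passes through $Y$. Since $A$ has a composition series of length $m-1$ and $Y$ is a maximal $\E$-subobject of $A$ (because $A/Y$ is $\E$-simple), Lemma \ref{D2} applied inside $A$ produces a composition series of $A$ through $Y$ of length $m-1$; appending $A\rightarrowtail X$ gives a composition series of $X$ through $Y$, then through $A$, of length $m$:
\[ 0=Y_0\rightarrowtail \dots \rightarrowtail Y=:Y_{m-2}\rightarrowtail A \rightarrowtail X. \]
Its factors, by the induction hypothesis applied to $A$ (two composition series of $A$, both of length $m-1$, one of them refining $Y$), are exactly the factors of $A$ together with $X/A$. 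The same construction starting from $B$ gives a composition series of $X$ through $Y$, then through $B$, of length $n$:
\[ 0=Y_0\rightarrowtail \dots \rightarrowtail Y \rightarrowtail B \rightarrowtail X. \]
But these two series share the common ``bottom part'' $0=Y_0\rightarrowtail\dots\rightarrowtail Y$: indeed both are obtained by extending a single fixed composition series of $Y$. Now $Y$ itself has a composition series (restrict either of the above), say of length $r$; then the first series has length $r+2$ so $m=r+2$, and the second has length $r+2$ so $n=r+2$, giving $m=n$. For the factors: the multiset of factors of the first series is (factors of $Y$) $\cup\ \{A/Y,\ X/A\}$, and of the second is (factors of $Y$) $\cup\ \{B/Y,\ X/B\}$; by the diamond axiom $\{A/Y, X/A\}=\{B/Y, X/B\}$ up to permutation and isomorphism, so these two multisets of factors agree. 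Finally, chaining the induction hypothesis three times --- the original length-$m$ series of $X$ has the same factors as the length-$m$ series of $X$ through $A$ (both are composition series of $X$, induction applies once we restrict suitably, or: their restrictions to $A$ are composition series of $A$ hence equivalent, and the top factor $X/A$ is common), which has the same factors as the series through $Y$-through-$A$, which equals up to permutation the series through $Y$-through-$B$, which has the same factors as the original length-$n$ series of $X'$ through $B$, which has the same factors as the given $X'_\bullet$ --- we conclude the two original series are equivalent.

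The main obstacle I anticipate is bookkeeping the induction cleanly: every invocation of the induction hypothesis must be on an object with a strictly shorter composition series ($A$, $B$, $Y$ all qualify, since they have series of length $m-1$, $n-1$, $r$ respectively), and one must be careful that Lemma \ref{D2} is applicable at each stage --- in particular that the relevant object is a \emph{maximal} $\E$-subobject of the ambient object, which holds because the cokernels in question are $\E$-simple (Remark \ref{remark poset P_X}). A secondary subtlety is the nonemptiness of $\inter_X(A,B)$ and the existence of a \emph{maximal} element therein; here one should note that $\Sub_X(A,B)$ contains $0$ so is nonempty, and that chains in it are bounded because $A$ (having a finite composition series) is $\E$-Noetherian, so Zorn's lemma --- or rather the ascending chain condition --- guarantees a maximal element $Y$. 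Once these points are in place, the argument is a routine diamond-lemma induction in the style of Baumslag's proof for groups, with Lemma \ref{D2} playing the role of the refinement step.
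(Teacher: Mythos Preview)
Your approach is the same as the paper's: both proceed by induction on the length of a composition series, invoke Lemma~\ref{D2} to route composition series through an element $Y\in\inter_X(A,B)$ (where $A,B$ are the two penultimate terms), and use the diamond axiom to match the top two factors. Your explicit treatment of the case where $A$ and $B$ coincide as subobjects, and your discussion of why $\inter_X(A,B)$ is nonempty, are more careful than the paper, which simply picks $Y$ without comment.

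There is, however, a small circularity in your derivation of $m=n$. The two applications of Lemma~\ref{D2} (inside $A$ with its length-$(m-1)$ series, and inside $B$ with its length-$(n-1)$ series) produce \emph{two} composition series of $Y$, of lengths $m-2$ and $n-2$ respectively; they need not ``share a common bottom part'' as you assert, and taking ``$r$'' to be a single well-defined number presupposes exactly what you are trying to prove. The fix is to apply the induction hypothesis directly to $Y$: since $Y$ admits a composition series of length $m-2<\max(m,n)$, any two composition series of $Y$ are equivalent, whence $m-2=n-2$ and the composition factors of $Y$ in both series agree. This is precisely how the paper argues. Once this step is corrected, the remaining chain of equivalences (comparing the original series of $A$ and of $B$ with the ones routed through $Y$, via the induction hypothesis applied to $A$ and to $B$) goes through as you describe.
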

\begin{proof}
Following the strategy of the proof in \cite[Chapter 4.5]{Pa},
assume we are given two composition series
\[\begin{tikzcd}0=B_0 \arrow[r, tail] & B_1 \arrow[r, tail] & \dots \arrow[r, tail]            & B_n=X \end{tikzcd}
\]
and 
\[ \begin{tikzcd}0=C_0 \arrow[r, tail] & C_1 \arrow[r, tail] & \dots \arrow[r, tail]            & C_m=X. \end{tikzcd}
\]
We proceed by induction on $n.$
For $n=1$, the object $X$ is $\E-$simple and the statement clearly holds.
Assume now $n \ge 2$.
For any object $Y \in \inter_X(B_{n-1},C_{m-1})$ we obtain the following diagram:
\[
\begin{tikzcd}
0=B_0 \arrow[r, tail] & \dots \arrow[r, tail]               & B_{n-1} \arrow[rd, tail] &   \\
                      & Y \arrow[ru, tail] \arrow[rd, tail] &                          & X \\
0=C_0 \arrow[r, tail] & \dots \arrow[r, tail]               & C_{m-1} \arrow[ru, tail] &  
\end{tikzcd}
\]
The diamond axiom applied to the maximal $\E-$subobjects $B_{n-1},C_{m-1}$ of $X$ yields that $Y$ is maximal in both $B_{n-1}$ and $C_{m-1}$.
Lemma \ref{D2} applied to the maximal element $Y$ of $B_{n-1}$
yields a composition series
\[\begin{tikzcd}
 0=Y_0 \arrow[r, tail] & \dots \arrow[r, tail] & Y_{n-3} \arrow[r, tail]& Y \arrow[r, tail] & B_{n-1}
\end{tikzcd}
\] of length $n-1$.
 Moreover, Lemma \ref{D2} applied to the maximal element $Y$ of $C_{m-1}$
yields a composition series
\[\begin{tikzcd}
 0=Y'_0 \arrow[r, tail] & \dots \arrow[r, tail] & Y'_{m-3} \arrow[r, tail]& Y \arrow[r, tail] & C_{m-1}
\end{tikzcd}
\] of length $m-1$.
This gives two composition series of the object $Y$ of length $n-2$ and $m-2$, respectively. By induction hypothesis, we conclude that $n-2 = m-2$ (thus $n=m$), and that these two composition series of $Y$ have the same composition factors, up to permutation and isomorphism. 
\\
Consider now the following diagram:
\\
\[
\begin{tikzcd}
0=B_0 \arrow[r, tail] & \dots \arrow[r, tail] & B_{n-2} \arrow[r, tail]             & B_{n-1} \arrow[rd, tail] &   \\
0 \arrow[r, tail]     & \dots \arrow[r, tail] & Y \arrow[ru, tail] \arrow[rd, tail] &                          & X \\
0=C_0 \arrow[r, tail] & \dots \arrow[r, tail] & C_{n-2} \arrow[r, tail]             & C_{n-1} \arrow[ru, tail] &  
\end{tikzcd}
\]
\\
By induction hypothesis, the two composition series

\[\begin{tikzcd}
 0=B_0 \arrow[r, tail] & \dots \arrow[r, tail] & B_{n-2} \arrow[r, tail]& B_{n-1}
\end{tikzcd} \]
and
\[\begin{tikzcd}
 0=Y_0 \arrow[r, tail] & \dots \arrow[r, tail] & Y \arrow[r, tail]& B_{n-1}
\end{tikzcd} 
\]
are equivalent.
\\
In the same way, the two composition series
\[ \begin{tikzcd}0=C_0 \arrow[r, tail] & \dots \arrow[r, tail] & C_{n-2} \arrow[r, tail]             & C_{n-1} \end{tikzcd}
\]
and
\[ \begin{tikzcd}0=Y'_0 \arrow[r, tail] & \dots \arrow[r, tail] & Y \arrow[r, tail]             & C_{n-1} \end{tikzcd}
\]
are equivalent.
\\\\
By the diamond axiom, 
the sets of quotients 
$\{X/B_{n-1} , B_{n-1}/Y\}$ and $\{X/C_{n-1} , C_{n-1}/Y\}$ are equal, up to isomorphism.
Using this fact and comparing the four composition series of length $n-1$ above, we conclude that the two composition series given in the beginning
have the same composition factors up to permutations and isomorphism.
\end{proof}

We provide in Section \ref{section:Artin-Wedderburn} examples of diamond categories that are not abelian categories with $\E_{all}$. 
However, not every exact category $(\A, \E)$ is diamond, or Jordan-H\"older, even if $\A$ is abelian, as the following example demonstrates.

\begin{example} \label{Counterexample1}
Consider the category $\A = \rep Q$ of representations of  the quiver
\[ \begin{tikzcd}[sep=small] & 1\arrow[dr] \arrow[dl] & \\ 2  & & 3 \end{tikzcd}\]

The Auslander-Reiten quiver of $\A$ is as follows:

\[ \begin{tikzcd}[sep= small]  S_2 \arrow[dr] & & I_3 \arrow[ll, dashed] \arrow[dr] \\ & P_1 \arrow[ur] \arrow[dr] & & I_1= S_1 \arrow[ll, dashed] \\ S_3 \arrow[ur] & & I_2 \arrow[ll, dashed] \arrow[ur]  \end{tikzcd} \]

By Theorem \ref{cube of AR-sequences}, each exact structure on $\A$ is uniquely determined by the set of Auslander-Reiten sequences which it contains. Consider the exact structure $\E$ containing the Auslander-Reiten sequences 
\begin{enumerate}
\item[(AR1)] \hspace{2.5cm}$\xymatrix{ 0 \ar[r] & S_2 \ar[r]& P_1 \ar[r] & I_3  \ar[r] & 0}$
\item[(AR2)] \hspace{2.5cm}$ \xymatrix{ 0 \ar[r] & S_3 \ar[r]& P_1 \ar[r] & I_2 \ar[r]  & 0 }$
\end{enumerate}
Then $(\A, \E)$ is not Jordan-H\"older, and it is also not a diamond category. Indeed, we have that the simples $S_2$ and $S_3$ are maximal subobjects of $P_1$, but the quotient sets $\{S_2, P_1/S_2=I_3\}$ and $\{S_3, P_1/S_3=I_2\}$ are not isomorphic.
\end{example}

%%%%%%%%%%%%%%%%%%%%%%%%%%%%%%%%%%%%%%%%%%%%%%
\section{$\E-$Artin-Wedderburn Categories}\label{section:Artin-Wedderburn}
%%%%%%%%%%%%%%%%%%%%%%%%%%%%%%%%%%%%%%%%%%%
 We use the notion of generalised intersection to define a version of the Jacobson radical relative to an exact structure $\E.$ This allows us to show the Jordan-H\"older property for Krull-Schmidt categories under the assumption that this $\E-$radical behaves well with respect to direct sums of $\E-$simple objects, that is, the exact structure satisfies an exact analog of the Artin-Wedderburn theorem.    We then classify all such exact structures in $\rep \Lambda$ where $\Lambda$ is a Nakayama algebra and furthermore note that these are all Jordan-H\"{o}lder exact structures on $\rep \Lambda$. 
 
 Throughout this section, we assume all categories to be Krull-Schmidt categories. Recall that a is a Krull-Schmidt category an additive category, $\A$, {such that each object decomposes into a finite direct sum of indecomposable objects having local endomorphism rings} and that this decomposition is unique up to isomorphism and permutation of summands. In particular, in this case $(\A, \E_{min})$ is a Jordan-H\"older category.

%%%%%%%%%%%%%%%%%%%%%%%%%%%%%%%%%%%%%%%%%%%
\subsection{$\E-$Jacobson Radical}
Let $(\A, \E)$ be an essentially small {Krull-Schmidt} exact category.
We introduce a Jacobson radical for exact categories. 

\begin{definition}\label{radical}
Let $X \in \A$, we define the \textit{$\E-$Jacobson radical} to be the generalised intersection \[ \radical(X) := \inter_X\{(Y,f) \in \mathcal{S}_X \mid (Y,f) \in \Max(\mathcal{S}_X) \} \] 
and $\mathcal{S}_X$ is as defined previously in \ref{poset of subobjects}.
Note that, by Definition \ref{general intersection and sum}, $\radical{S} = \{0\}$ for all $\E-$simple objects $S$. 
\end{definition}

\begin{proposition}\label{properties of radical}  Consider $X, Y \in \A$ and $\begin{tikzcd}r: R \arrow[r, tail] & X.\end{tikzcd}$
\begin{enumerate}
    \item[a)] For all $(R, r) \in \radical (X)$, $\radical (\Coker(r)) = \{0\}$.
    \item[b)] For all $(Z,g) \in \mathcal{S}_X$, $Z$ is an $\E-$subobject of some $(R,r) \in \radical(X)$ if and only if $pg=0$ for all $\E-$simple quotients $p:X \twoheadrightarrow S$ of $X$.
  
\end{enumerate}
\end{proposition}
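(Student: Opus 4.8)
The plan is to prove each part by unwinding the relevant definitions and leaning on the Fourth $\E$-isomorphism theorem (Proposition \ref{4 iso}) together with the characterisation of maximal subobjects from Remark \ref{maximal}. For part (a), fix $(R,r) \in \radical(X) = \Max(\Sub_X(\Max(\mathcal{S}_X)))$ and write $Q = \Coker(r)$, so we have an admissible sequence $R \rightarrowtail X \twoheadrightarrow Q$. We must show $\radical(Q) = \{0\}$, i.e. that the generalised intersection of the maximal $\E$-subobjects of $Q$ is the zero object. The idea: a maximal $\E$-subobject $N \rightarrowtail Q$ corresponds, via the Fourth $\E$-isomorphism theorem applied to $R \rightarrowtail X \twoheadrightarrow Q$, to an $\E$-subobject $M$ with $R \rightarrowtail M \rightarrowtail X$ and $M/R \cong N$; since $Q/N \cong X/M$ (another application of the poset isomorphism, or \cite[Lemma 3.5]{Bu}), $N$ is maximal in $Q$ exactly when $M$ is maximal in $\mathcal{S}_X$. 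Now suppose $(Z,g) \rightarrowtail Q$ were a non-zero common $\E$-subobject of all such $N$; pull back along $X \twoheadrightarrow Q$ (or take the preimage $\phi(Z)$ as in the proof of Proposition \ref{4 iso}) to get $M_Z$ with $R \subsetneq M_Z \rightarrowtail X$ lying inside every maximal $M$. Then $M_Z \in \Sub_X(\Max(\mathcal{S}_X))$ strictly contains $R$, contradicting that $R$ is a \emph{maximal} element of that set, i.e.\ $(R,r) \in \radical(X)$. Hence no such non-zero $Z$ exists and $\radical(Q) = \{0\}$.

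For part (b), let $(Z,g) \in \mathcal{S}_X$. First suppose $Z$ is an $\E$-subobject of some $(R,r) \in \radical(X)$. Let $p: X \twoheadrightarrow S$ be an $\E$-simple quotient; its kernel $(K, k)$ is then a maximal $\E$-subobject of $X$ (Remark \ref{remark poset P_X}), so $K \in \Max(\mathcal{S}_X)$. Since $(R,r) \in \radical(X) = \Max(\Sub_X(\Max(\mathcal{S}_X)))$, in particular $R \in \Sub_X(\Max(\mathcal{S}_X))$, so $R$ — and hence $Z$ — is an $\E$-subobject of $K = \Ker p$, which gives $p g = 0$. Conversely, suppose $pg = 0$ for every $\E$-simple quotient $p: X \twoheadrightarrow S$. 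Each maximal $\E$-subobject $(K,k)$ of $X$ arises as $K = \Ker p$ for the $\E$-simple quotient $p: X \twoheadrightarrow X/K$; the condition $pg = 0$ together with the universal property of the kernel $k$ produces a (unique, automatically admissible by Proposition \ref{obscure axiom}) monic $Z \rightarrowtail K$ factoring $g$. Thus $Z \in \Sub_X(\Max(\mathcal{S}_X))$. Finally, since $\radical(X) = \Max(\Sub_X(\Max(\mathcal{S}_X)))$, the element $Z$ lies below some maximal element $(R,r)$ of $\Sub_X(\Max(\mathcal{S}_X))$ — here one should invoke that $\mathcal{S}_X$ (and hence $\Sub_X(\cdots)$) has the ascending-chain behaviour needed for maximal elements to dominate, or argue directly via Krull--Schmidt / essential smallness — giving that $Z$ is an $\E$-subobject of some $(R,r) \in \radical(X)$.

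The main obstacle I anticipate is the very last step of part (b): knowing that an element of $\Sub_X(\Max(\mathcal{S}_X))$ sits below a \emph{maximal} such element. This requires some Noetherian-type or finiteness hypothesis so that chains in $\mathcal{S}_X$ stabilise, or one uses the standing Krull--Schmidt and essential smallness assumptions of this section (stated at the start of Section \ref{section:Artin-Wedderburn}) to ensure $\Max$ is "large enough". A secondary technical point is verifying admissibility of the induced monics $Z \rightarrowtail K$ and $M_Z \rightarrowtail X$ throughout: these follow from Proposition \ref{obscure axiom} (a morphism admitting a cokernel whose composite with an admissible monic is admissible, is itself admissible) and from the stability of admissible monics under pullback along admissible epics used implicitly in the Fourth $\E$-isomorphism theorem, so I would cite those rather than re-prove them. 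The bijective correspondences of Proposition \ref{4 iso} do most of the bookkeeping; the content is just translating "common subobject of all maximals in $Q$" back and forth across the sequence $R \rightarrowtail X \twoheadrightarrow Q$.
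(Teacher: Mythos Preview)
Your argument follows the paper's exactly: part (a) uses the Fourth $\E$-isomorphism theorem to match maximal $\E$-subobjects of $X/R$ with those of $X$ (all of which contain $R$ since $R\in\radical(X)$), and part (b) rests on the bijection between $\E$-simple quotients $p:X\twoheadrightarrow S$ and maximal $\E$-subobjects $\Ker p$. You are in fact more careful than the paper's one-line proof of (b), correctly flagging that passing from $Z\in\Sub_X(\Max(\mathcal{S}_X))$ to some $R\in\radical(X)$ above it needs a finiteness hypothesis the paper leaves implicit; one small caution is that your appeal to Proposition~\ref{obscure axiom} for the admissibility of the factoring $Z\to K$ requires this map to already have a cokernel, which is not automatic in a general exact category and deserves a separate word of justification.
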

\begin{proof}
a) Let $(R, r)\in \radical (X)$ and  $(Q, q) \in \radical (X/R)$ corresponding to $Q' \rightarrowtail X$  via the Fourth $\E-$isomorphism Theorem  (Proposition \ref{4 iso}). By same result {and since $(R,r) \in \radical (X)$ we have that the} maximal $\E-$subobjects of $X$ correspond exactly to maximal $\E-$subobjects of $X / R$. Hence, as $Q$ is an $\E-$subobject of every $\E-$maximal subobject of $X/R$, we have that  $Q'$  is an $\E-$subobject of every maximal $\E-$subobject of $X$. Thus, by definition of the generalised intersection, since $R \rightarrowtail Q'$ we deduce that $R \cong Q'$ so $Q \cong Q' / R \cong 0$. 
 
 b) The claim follows from the observation that admissible epimorphisms $X\twoheadrightarrow S$ with $S$ being $\E-$simple correspond exactly to maximal $\E-$subobjects of $X$.
\end{proof}

\begin{definition} \label{esemisimple}
An object $X \in \A$ is called \textit{ $\E-$semisimple} if it can be written as a {finite} direct sum of $\E-$simple objects.
\end{definition}

We study exact categories where the $\E-$semisimple objects have nice characterisations:
\begin{definition}\label{A-W}
 An exact structure $\E$ on $\A$ is called \textit{Artin-Wedderburn} if for any object $X \in \A$ the following properties are equivalent:
\begin{enumerate}
    \item[(AW1)] Every sequence in $\E$ of the form $A \rightarrowtail X \twoheadrightarrow X/ A$ splits,
    \item[(AW2)] $X$ is $\E-$semisimple,
    \item[(AW3)] $\radical(X) = \{0\}$.
\end{enumerate} 
We say in this case that $(\A,\E)$ is an {\em $\E-$Artin-Wedderburn category.}
\end{definition}

\begin{remark} 
\begin{enumerate}
    \item[a)] {The implication (AW1) $\Rightarrow$ (AW2) always holds for Krull-Schmidt categories. Indeed, suppose $X$ is not $\E-$semisimple. Then in the decomposition of $X$ as a direct sum of indecomposables, $X \cong \bigoplus_{i =1}^n X_i$, there exists $1 \leq i \leq n $ such that $X_i$ is not $\E-$simple. Thus there exists a non-split $\E-$inflation $f: Y \rightarrowtail X$  and observe that composing $f$ with the canonical inclusion $X_i \rightarrowtail X$ results in a non-split $\E-$inflation $Y \rightarrowtail X$. 
    
   We note that without the Krull-Schmidt assumption on our categories, this implication in general does not hold, even in the abelian case. A class of counterexamples is given by the continuous spectral categories. These are Grothendieck categories where every short exact sequence splits but there are no simple objects as every object is decomposable, see \cite[Example 2.9]{PS19} for examples of such categories.}
  
  \item[b)] {The implication (AW2) $\Rightarrow$ (AW3) also always holds. Indeed, let $S_i$, $1 \leq i \leq n$ be $\E-$simple objects and $X = \bigoplus_{i=1}^n S_i$. Then observe that for all $1 \leq j \leq n $ that $\bigoplus_{i = 1, \\ i \neq j}^n S_i$ equipped with the canonical inclusion $f_j : \bigoplus_{i \neq j}S_i \rightarrowtail X$ is an $\E-$maximal subobject of $X$. Thus for every $(r:R \rightarrowtail X) \in \radical (X)$, $r$ factors through $f_j$ for all $1 \leq j \leq n$ and we deduce that $r = 0$.  }
  
\end{enumerate}
 \end{remark}

 \begin{example}
Consider the category $\A = \rep Q$ of representations of  the quiver
\[ Q: \qquad \begin{tikzcd}
1 \arrow[r, "\alpha"] & 2 & 3 \arrow[l, "\beta"'] 
\end{tikzcd}\]
We classify which exact structures $\E$ on $\A$ are Artin-Wedderburn, and when $(\A,\E)$ is a diamond or Jordan-H\"older category.
 The Auslander-Reiten quiver of $\A$ is \[\begin{tikzcd}
                          & P_3 \arrow[rd] &                 & S_1 \arrow[ll, dotted] \\
S_2 \arrow[ru] \arrow[rd] &                & I_2 \arrow[ll, dotted] \arrow[ru] \arrow[rd] &                        \\
                          & P_1 \arrow[ru] &                                              & S_3 \arrow[ll, dotted]
\end{tikzcd} \] and the Auslander Reiten sequences in $\A$ are 
\begin{enumerate}
    \item[(1)] $S_2 \to P_1 \oplus P_3 \to I_2$
    \item[(2)] $P_3 \to I_2 \to S_1$
    \item[(3)]$ P_1 \to I_2 \to S_3$.
\end{enumerate}
This example has been studied in \cite[Example 4.2]{BHLR}, and $\A$ admits precisely $2^3 = 8$  exact structures $\E$ corresponding to choosing some subset $\mathcal B$ of the three Auslander-Reiten sequences in $\A$, as discussed in Theorem \ref{cube of AR-sequences}. We denote the different exact structures accordingly as 
$\E_{min}, \E(1), \E(2), \E(3),  \E(1,2),  \E(1,3),  \E(2,3), \E_{max}$, indicating the Auslander-Reiten sequences that are included.

Consider first the exact structure $\E(1)$ generated by the Auslander-Reiten sequence (1). Then the only non-split indecomposable $\E(1)$-sequence is (1) thus $P_1 \oplus P_3$ is $\E(1)$-semisimple and $(\A, \E(1))$ does not satisfy the implication (AW2) $\Rightarrow $ (AW1). The same object $P_1 \oplus P_3$ also shows that $(\A, \E(1))$ is not Jordan-H\"older (and hence not diamond) since there are non-equivalent $\E(1)-$composition series
$0 \to S_2 \to P_1 \oplus P_3$ and $0 \to P_1 \to P_1 \oplus P_3$.

Now consider the exact structure $\E(2,3)$ on $\A$ generated by the sequences (2) and (3). As in Example \ref{Counterexample1} one can see that $(\A,\E(2,3))$ is not Jordan-H\"older nor diamond. Moreover, $\mbox{rad}_{\E(2,3)}(I_2) = \{0\}$ but $I_2$ is not $\E(2,3)-$semisimple thus $(\A,\E(2,3))$ satisfies neither the implication  (AW3) $\Rightarrow$ (AW1) nor (AW3)$\Rightarrow$ (AW2).

One may verify that all other exact structures $\E$ on $\A$ are Artin-Wedderburn, and also satisfy the diamond and Jordan-H\"older property, but only $(\A, \E_{max})$ is an AIS-category.
We conclude that six of the eight exact structures are Jordan-H\"older, and in this example, the conditions being $\E-$Artin-Wedderburn, diamond and Jordan-H\"older are equivalent.
\end{example}
\bigskip

A further example of $\E-$Artin-Wedderburn categories is provided by the split exact structure:

\begin{lemma}\label{Emin is W-A}
$\A$ is an $\E_{min}$-Artin-Wedderburn category.
\end{lemma}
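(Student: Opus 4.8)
The plan is to verify directly that, for the split exact structure $\E_{min}$ on a Krull--Schmidt category $\A$, the three conditions (AW1), (AW2), (AW3) in Definition \ref{A-W} are equivalent for every object $X$. Note first that the $\E_{min}$-simple objects are precisely the indecomposable objects, since an admissible monic with respect to $\E_{min}$ is a split monic, so an $\E_{min}$-subobject of $X$ is the same thing as a direct summand of $X$, and $X$ admits no proper nonzero direct summand exactly when it is indecomposable. Consequently, the $\E_{min}$-semisimple objects of Definition \ref{esemisimple} are \emph{all} objects of $\A$, because $\A$ is Krull--Schmidt and hence every object is a finite direct sum of indecomposables. This already makes (AW2) a tautology in this setting, so the work is to show (AW1) and (AW3) also always hold.

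For (AW1): every sequence in $\E_{min}$ is by definition a split short exact sequence, so (AW1) holds for all $X$ with no extra hypothesis. This follows straight from the definition of the minimal exact structure recalled in Section 2. As noted in the general remark after Definition \ref{A-W}, the implications (AW1)$\Rightarrow$(AW2)$\Rightarrow$(AW3) already hold for Krull--Schmidt categories, so combined with the two observations above — (AW1) always holds and (AW2) always holds — we obtain that (AW3) always holds as well, and all three statements are equivalent (indeed all three are always true). Thus $(\A,\E_{min})$ is an $\E$-Artin--Wedderburn category.

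Alternatively, to make (AW3) explicit rather than invoking the chain of implications, one can argue directly: write $X \cong \bigoplus_{i=1}^n X_i$ with each $X_i$ indecomposable, hence $\E_{min}$-simple. For each $j$, the canonical inclusion $\bigoplus_{i\neq j} X_i \rightarrowtail X$ is a split monic, hence an $\E_{min}$-subobject whose cokernel $X_j$ is $\E_{min}$-simple, so it lies in $\Max(\mathcal S_X)$. Any $(R,r)\in\radical(X)=\inter_X\{\text{maximal }\E\text{-subobjects}\}$ must, by the definition of the generalised intersection in Definition \ref{general intersection and sum}, be a common $\E_{min}$-subobject of all of these, i.e.\ a common direct summand of $\bigoplus_{i\neq j} X_i$ for every $j$; by uniqueness of the Krull--Schmidt decomposition the only such summand is $0$, so $\radical(X)=\{0\}$.

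The statement has essentially no obstacle: the content is entirely the identification of $\E_{min}$-simple objects with indecomposables and the resulting collapse of the three conditions. The only mild care needed is to confirm that "maximal $\E_{min}$-subobject" really does mean "complement of an indecomposable summand," which is exactly Remark \ref{remark poset P_X} together with the above description of $\E_{min}$-simples, and that the generalised intersection of Definition \ref{general intersection and sum} of all these is $\{0\}$, which is immediate from Krull--Schmidt uniqueness.
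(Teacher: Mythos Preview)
Your proposal is correct and follows essentially the same approach as the paper: identify $\E_{min}$-simples with indecomposables, observe that (AW1) and (AW2) are always satisfied in a Krull--Schmidt category with the split exact structure, and conclude that all three conditions are equivalent (in fact all vacuously true). Your additional direct verification of (AW3) via the canonical complements $\bigoplus_{i\neq j}X_i$ is not in the paper's proof but simply spells out the content of the general remark (b) following Definition~\ref{A-W}.
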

\begin{proof}
For the exact structure $\E=\E_{min}$, we have that the admissible monics are precisely the sections, and the $\E-$simple objects are the indecomposables.  Every object in $\A$ is thus $\E-$semisimple, and we clearly have the equivalence of (AW1) and (AW2). Since every $X$ is $\E-$semisimple, the implication (AW3) $\Longrightarrow$ (AW2) is always true. 
\end{proof}

As we have noted, for Krull-Schmidt categories, $(\A, \E_{min})$ is a Jordan-H\"older category. 
The following result further studies the relationship between Krull-Schmidt categories and the Jordan-H\"older property.
\color{black}

\begin{theorem}\label{thm:AW->JH}
Let $(\A, \E)$ be an $\E$-Artin-Wedderburn category. Then $(\A, \E)$ is a Jordan-H\"{o}lder exact category.
\end{theorem}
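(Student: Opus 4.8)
The plan is to show that any Krull--Schmidt $\E$-Artin--Wedderburn category $(\A,\E)$ is a \emph{diamond} exact category in the sense of Definition \ref{Diamond}, and then to quote Theorem \ref{JH}. So I would fix $X\in\A$, two distinct maximal $\E$-subobjects $(A,f)$ and $(B,g)$ with $\E$-simple cokernels $S:=X/A$ and $T:=X/B$, and some $Y\in\inter_X(A,B)$, and verify the conclusion of the Diamond Axiom. The first reduction is to replace $X$ by $\bar X:=X/Y$: by the Fourth $\E$-isomorphism theorem (Proposition \ref{4 iso}) the objects $\bar A:=A/Y$ and $\bar B:=B/Y$ are again distinct maximal $\E$-subobjects of $\bar X$ with $\bar X/\bar A\cong S$ and $\bar X/\bar B\cong T$, and the maximality of $Y$ in $\Sub_X(A,B)$ translates, again through Proposition \ref{4 iso}, into the statement that no nonzero $\E$-subobject of $\bar X$ lies simultaneously in $\bar A$ and in $\bar B$; that is, $\Sub_{\bar X}(\bar A,\bar B)=\{0\}$.

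Since $\bar A$ and $\bar B$ are among the maximal $\E$-subobjects of $\bar X$, this forces $\radical(\bar X)=\{0\}$, so by the implication (AW3) $\Rightarrow$ (AW2) the object $\bar X$ is $\E$-semisimple, and by (AW2) $\Rightarrow$ (AW1) every admissible short exact sequence with middle term $\bar X$ splits. Hence every $\E$-subobject of $\bar X$ is a direct summand; in particular $\bar A$ and $\bar B$ are direct summands of $\bar X$, and being summands of a finite direct sum of $\E$-simple objects they are themselves $\E$-semisimple. Splitting the sequences $\bar A\rightarrowtail\bar X\twoheadrightarrow S$ and $\bar B\rightarrowtail\bar X\twoheadrightarrow T$ shows that each of $\bar A,\bar B$ has exactly one fewer $\E$-simple summand than $\bar X$.

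The heart of the proof is to show that $\bar X$ has only two $\E$-simple summands, equivalently that $\bar A$ and $\bar B$ are $\E$-simple; I would argue by contradiction, so suppose $\bar A$ is not $\E$-simple. Consider $h:=\pi\circ f\colon\bar A\to T$, where $\pi\colon\bar X\twoheadrightarrow T$ is the cokernel of $g$; then $h\neq 0$, since otherwise $f$ factors through $\ker\pi=\bar B$, contradicting maximality of $\bar A$ together with $\bar A\neq\bar B$. The key observation is that for \emph{every} $\E$-simple $\E$-subobject $Z$ of $\bar A$ one has $\pi|_Z\neq 0$: if $\pi|_Z=0$ then $Z\rightarrowtail\bar X$ factors through $\bar B$, and because $Z$ and $\bar B$ are both direct summands of the $\E$-semisimple $\bar X$ the induced map $Z\rightarrowtail\bar B$ is split, hence an admissible monic, so $Z$ would be a nonzero element of $\Sub_{\bar X}(\bar A,\bar B)$. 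Writing $\bar A\cong\bigoplus_{i=1}^{m}A_i$ with the $A_i$ $\E$-simple ($m\ge 2$ by assumption) and $h=[h_1,\dots,h_m]$ with each $h_i\neq 0$, one produces an $\E$-simple \emph{summand} of $\bar A$ on which $h$ vanishes — for instance a graph summand $\{(a_1,\varphi(a_1))\}$ of $A_1\oplus A_2$ with $h_1=-h_2\varphi$, using the $\E$-Schur lemma (Proposition \ref{schur}) to control the components $h_i$ between $\E$-simple objects — contradicting the previous sentence. (The rigid case, where there are no nonzero maps between the $\E$-simple summands of $\bar X$, is handled separately: there $\Sub_{\bar X}(\bar A,\bar B)\neq\{0\}$ already whenever $\bar X$ has more than two $\E$-simple summands.) Hence $\bar A$ and $\bar B$ are $\E$-simple, $\bar X$ has two $\E$-simple summands, and from $\bar X\cong\bar A\oplus S\cong\bar B\oplus T$ the uniqueness part of Krull--Schmidt gives that $\{\bar A,S\}$ and $\{\bar B,T\}$ both equal the multiset of indecomposable summands of $\bar X$, so they agree up to permutation and isomorphism. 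Translating back ($\bar A=A/Y$, $\bar B=B/Y$, $S=X/A$, $T=X/B$), this is exactly the Diamond Axiom for $(A,f),(B,g)$ and $Y$; as $X,A,B,Y$ were arbitrary, $(\A,\E)$ is a diamond exact category, and Theorem \ref{JH} completes the proof.

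I expect the main obstacle to be the contradiction step in the last paragraph. A priori $h=\pi f$ is only a composite of an admissible monic with an admissible epic, which need not be admissible; what one really needs is that the restriction of $h$ to a suitable two-summand subobject of $\bar A$ is an admissible epimorphism with $\E$-simple kernel splitting off $\bar A$. Since composites of admissible morphisms are not admissible in general, and $\E$-simple objects in a Krull--Schmidt category need not be bricks, this has to be squeezed out of $\E$-semisimplicity of $\bar X$ (every admissible subobject of $\bar X$ splits off) together with Krull--Schmidt cancellation and the $\E$-Schur lemma, possibly with a short case analysis according to whether certain $\Hom$-groups between the $\E$-simple summands vanish — this is precisely where the Artin--Wedderburn hypothesis does work beyond merely producing the semisimple object $\bar X$.
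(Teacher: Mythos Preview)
Your overall strategy and the reduction step coincide with the paper's proof: both establish the Diamond Axiom and then invoke Theorem \ref{JH}, and both pass to the quotient by $Y$ via Proposition \ref{4 iso} to reduce to the situation $\inter_{\bar X}(\bar A,\bar B)=\{0\}$, after which (AW3)$\Rightarrow$(AW2)$\Rightarrow$(AW1) gives that $\bar X$ is $\E$-semisimple with $\bar A\oplus S\cong\bar X\cong\bar B\oplus T$.

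The divergence is in the endgame, and the gap you flag is real. You attempt to produce a nonzero common $\E$-subobject of $\bar A,\bar B$ by analysing $h=\pi f:\bar A\to T$ componentwise and applying the $\E$-Schur lemma to the $h_i:A_i\to T$. But each $h_i$ is a composite of an admissible monic followed by an admissible epic, hence not known to be admissible, so Proposition \ref{schur} does not apply; a nonzero $h_i$ need not be an isomorphism (for instance a rank-one endomorphism of $k^2$ in Example \ref{even dimensional spaces}). Without this, the graph construction $h_1=-h_2\varphi$ cannot be carried out, and your proposed case analysis does not close the gap in any evident way.

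The paper's endgame avoids $h$ and the $\E$-Schur lemma altogether, using only Krull--Schmidt uniqueness. Writing $\bar A\cong\bigoplus_{j=0}^{n}S_j$ and $\bar B\cong\bigoplus_{j=0}^{n}T_j$ with all $S_j,T_j$ $\E$-simple, Krull--Schmidt applied to $\bar X$ forces the multisets $\{S_0,\dots,S_n,S\}$ and $\{T_0,\dots,T_n,T\}$ to agree; after relabelling so that $S_0\cong T$ and $T_0\cong S$, the object $\bigoplus_{j\ge 1}S_j$ is a direct summand of both $\bar A$ and $\bar B$, and hence yields, for $n\ge 1$, a nonzero element of $\inter_{\bar X}(\bar A,\bar B)$, a contradiction. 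Thus $n=0$, both $\bar A$ and $\bar B$ are $\E$-simple, and the remaining multiset equality $\{\bar A,S\}=\{\bar B,T\}$ is exactly the Diamond conclusion. No map to $T$ is ever examined and no admissibility of composites is needed.
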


\begin{proof}
We show that $(\A, \E)$ satisfies the Diamond Axiom \ref{Diamond}. {For that purpose}, let 
\[ \begin{tikzcd}[sep=small]
& A \arrow[dr, tail] & \\ C \arrow[ur, tail] \arrow[tail, dr] & & D \\ & B \arrow[ur, tail] & 
\end{tikzcd}\] be a commutative diagram in $(\A, \E)$ with $D/ A$ and $D/ B$ being $\E$-simple and $C \in \inter_{D}(A, B)$. By the Fourth $\E-$Isomorphism Theorem  (Proposition \ref{4 iso}), there is a commutative diagram 
\[ \begin{tikzcd}[sep=small]
& A/ C \arrow[dr, tail] & \\ 0 \arrow[ur, tail] \arrow[tail, dr] & & D/C \\ & B/C \arrow[ur, tail] & 
\end{tikzcd}\] with $(D/ C) / (A/C) \cong D /A$ and $(D/C) / (B/C) \cong D/ B$ being $\E-$simple and $\inter_{D/C}(A/C, B/C) = \{0\}$. Thus, it is enough to consider diagrams of the form \[\begin{tikzcd}[sep=small]
& X_0 \arrow[dr, tail, "f_0"] & \\ 0 \arrow[ur, tail] \arrow[tail, dr] & & Y \\ & X_1 \arrow[ur, tail, "f_1"'] & 
\end{tikzcd}\] with $ Y / X_i$ being $\E-$simple for $i=0,1$ and $\inter_Y(X_0, X_1) = \{0\}$. 

We must show that the $X_i$ are $\E-$simple and that the {sets $\{X_0, Y / X_0\}, \{X_1, Y / X_1\}$ are equal up to permuation and isomorphism of their elements.}

If $(X_0,f_1)$ and $(X_1,f_1)$ are isomorphic as $\E-$subobjects of $Y$ {it follows that $X_0 \cong X_1$ is $\E-$simple since $\inter_Y(X_0, X_1) = \{0\}$. So we may assume that this is not the case.} Observe that the  $(X_i,f_i)$ are both maximal $\E$-subobjects of $Y$. It follows that $\radical(Y) \subset \inter_Y ( (X_0,f_0), (X_1, f_1)) = \{0\}$. Since $(\A, \E)$ is $\E$-Artin-Wedderburn, the short exact sequences 
$\begin{tikzcd} X_i \arrow[r, tail, "f_i"] & Y \arrow[r, two heads] & Y / X_i \end{tikzcd}$ 
both split and $Y$ is $\E$-semisimple. Thus $X_0 \oplus Y / X_0 \cong Y \cong X_1 \oplus Y / X_1$  and $X_0 \cong \bigoplus_{j=0}^n S_j$ and $X_1 \cong \bigoplus_{j=0}^m T_j$ with the $S_j$ and $T_j$ being $\E$-simple. As $(\A, \E)$ is Krull-Schmidt, $n=m$ and {the sets $\{ S_0, \dots, S_n, Y / X_0\}, \{T_0, \dots, T_n, Y / X_1\}$ consist of the same objects, up to permutation and isomorphism.} Without loss of generality, we may suppose that $S_0 \cong Y / X_1$ and $T_0 \cong Y / X_0$. Now $\oplus_{j=1}^n S_j \rightarrowtail X_i$, but since $\inter_{Y}(X_0, X_1) = \{0\}$ we conclude that $n=0$ and that the $X_i$ are $\E-$simple and we are done. \end{proof}

%%%%%%%%%%%%%%%%%%%%%%%%%%%%%%%%%%%%%%%%%%%%
\subsection{The Artin-Wedderburn exact structures for Nakayama algebras}
We characterise all Artin-Wedderburn exact structures for any Nakayama algebra $\Lambda.$ It turns out they are exactly the Jordan-H\"older exact categories for {$\mod \Lambda,$ the category of finitely generated left $\Lambda-$modules.}

A finite-dimensional algebra $\Lambda$ is called \emph{Nakayama} if every indecomposable right and left projective $\Lambda$-module is uniserial. The representation theory of Nakayama algebras is well-known ( see e.g. \cite[Chapter V]{ASS} or \cite[Section VI.2]{ARS}), we recall some details here:

The indecomposable $\Lambda-$modules are all uniserial, thus determined by the list of its composition factors from top to socle, which can be represented by a word $w$ in the vertices of the quiver of $\Lambda$. Denote the module corresponding to a word $w$ by $[w]$. Equivalently, indecomposable $\Lambda-$modules are parametrized by the non-zero paths in the quiver $Q$ of $\Lambda.$ 

If we label the vertices of the path in $Q$ corresponding to the indecomposable module $[w]$ as 
\[ c \to c+1 \to  \dots \to d-1 \to d \]
then we denote the module $[w]$ also by $[w]= [c,d]$. In this case, the Auslander-Reiten quiver of $\Lambda$ contains a subquiver of the form described in Figure \ref{fig:ARquiver}
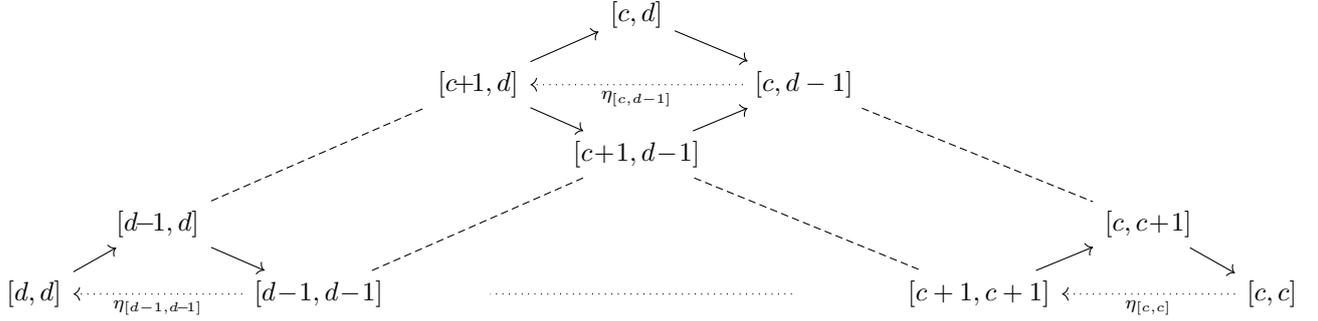
\begin{figure}
    \centering
\[
\begin{tikzcd}[sep = small]
 & & & & {[c,d]} \arrow[rd] &    & & &  \\
 & & & {\small [c\!\!+\!\!1,d]} \arrow[rd] \arrow[ru] & & {[c,d-1]} \arrow[rrdd, no head, dashed] \arrow[ll, dotted, "\eta_{[c,d-1]}"] & & & \\
 & & & & {[c\!+\!1,d\!-\!1]} \arrow[rrdd, no head, dashed] \arrow[ru] & & &  & \\
 & {[d\!\!-\!\!1,d]} \arrow[rd] \arrow[rruu, no head, dashed] & & & & &  & {[c,c\!+\!1]} \arrow[rd] &                              \\
{[d,d]} \arrow[ru] &      & {[d\!-\!1,d\!-\!1]} \arrow[ll, dotted, "\eta_{[d-1,d\!-\!1]}"] \arrow[rruu, no head, dashed] & {}  & {} & {} \arrow[ll, no head, dotted]   & {[c+1,c+1]} \arrow[ru] &    & {[c,c]} \arrow[ll, dotted,  "\eta_{[c,c]}"] 
\end{tikzcd} \] 
    \caption{Part of the Auslander-Reiten quiver of $\rep \Lambda$ containing the module $[c,d]$ and all of its simple composition factors}
    \label{fig:ARquiver}
\end{figure}
where we label the Auslander-Reiten sequences $\eta_{[c,d-1]}$ in $\A = \mod \Lambda$  by the module $[c,d-1]$ where the sequence ends; the sequence starts in the Auslander-Reiten translate $\tau [c,d-1] = [c+1,d]$.
For indecomposables $[w]$ and $[w']$, the space \[\Ext_\Lambda^1([w],[w'])\] is at most one-dimensional, and a basis can be given by the following non-split short exact sequences:
If $[ww']$ is indecomposable, then a basis is given by
\[ \eta_{w,w'}: 0 \longrightarrow [w] \longrightarrow [ww'] \longrightarrow [w'] \longrightarrow  0.
\]
If $w=uv$ and $w'=vt$ such that $[uvt]$ is indecomposable, then a basis is given by
\[ \eta_{w,w'}: 0 \longrightarrow [w] \longrightarrow [v] \oplus [uvt] \longrightarrow [w'] \longrightarrow  0.
\]
We refer to $[ww']$ respectively $ [uvt]$ as the {\em top module} in the extension $\eta_{w,w'}.$
Thus for the Auslander-Reiten sequence $\eta_{[c,d-1]}$, the top module is $[c,d]$.
The description of the indecomposables and the Auslander-Reiten sequences in $\A$ 
can be obtained from \cite{BR} for the more general case of string algebras, and the basis  for the $\Ext^1$-spaces is given in \cite{BDMTY} for gentle algebras.

Our first step is to give a more precise description of an exact structure on $\A$ using the Auslander-Reiten sequences it contains.

\begin{theorem}\label{EB description}
 Let $\mathcal{B}$ be a set of Auslander-Reiten sequences in $\A$ and $\E=\E(\mathcal{B})$ be the corresponding exact structure on $\A$. Then the short exact sequence $\eta_{w,w'} \in \E$ if and only if the Auslander-Reiten sequence $\eta_{[u]}$ belongs to $\mathcal{B}$ whenever there is a non-zero morphism from $[w]$ to $\tau[u]$ and from $[u]$ to $[w'].$
\end{theorem}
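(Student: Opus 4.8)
The plan is to reduce the statement, via Theorem~\ref{cube of AR-sequences}, to a computation of which Auslander--Reiten sequences lie in the \emph{smallest} exact structure $\langle \eta_{w,w'} \rangle$ containing the single sequence $\eta_{w,w'}$. This smallest exact structure exists (it is the intersection of all exact structures containing $\eta_{w,w'}$, and $\E_{all}=\E_{max}$ contains it since $\A=\mod\Lambda$ is abelian), and $\A$ is abelian, Hom-finite, Krull--Schmidt and representation-finite, so Theorem~\ref{cube of AR-sequences} applies. Write $\mathcal{B}_1$ for the set of Auslander--Reiten sequences contained in $\langle\eta_{w,w'}\rangle$, so $\langle\eta_{w,w'}\rangle=\E(\mathcal{B}_1)$. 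Using the uniqueness in Theorem~\ref{cube of AR-sequences} one checks directly that $\langle\eta_{w,w'}\rangle\subseteq\E(\mathcal{B})$ if and only if $\mathcal{B}_1\subseteq\mathcal{B}$, while $\eta_{w,w'}\in\E(\mathcal{B})$ if and only if $\langle\eta_{w,w'}\rangle\subseteq\E(\mathcal{B})$ by minimality. Hence the theorem is equivalent to the identification
\[ \mathcal{B}_1 = \{\, \eta_{[u]} \mid \Hom_\Lambda([w],\tau[u]) \neq 0 \neq \Hom_\Lambda([u],[w']) \,\}, \]
which is now a question entirely internal to $\mod\Lambda$ and its Auslander--Reiten theory.

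For the inclusion $\supseteq$, fix an indecomposable $[u]$ with $\Hom_\Lambda([w],\tau[u])\neq 0$ and $\Hom_\Lambda([u],[w'])\neq 0$. Choosing nonzero maps $g\colon[u]\to[w']$ and $h\colon[w]\to\tau[u]$, form the pullback $g^{*}\eta_{w,w'}\in\Ext^1_\Lambda([u],[w])$ and then the pushout $h_{*}g^{*}\eta_{w,w'}\in\Ext^1_\Lambda([u],\tau[u])$; both lie in $\langle\eta_{w,w'}\rangle$, being obtained from $\eta_{w,w'}$ by a pullback and a pushout. It then suffices to show this class is non-split, for then, since $\Ext^1_\Lambda([u],\tau[u])$ is at most one-dimensional over a Nakayama algebra, it is a nonzero multiple of the Auslander--Reiten sequence $\eta_{[u]}$, giving $\eta_{[u]}\in\langle\eta_{w,w'}\rangle$. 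Non-splitness is checked combinatorially: the indecomposables are the uniserial modules $[c,d]$, the spaces $\Hom_\Lambda([c,d],[c',d'])$ are governed by an overlap-of-intervals condition, and $\tau[c,d]=[c+1,d+1]$; with these descriptions one chooses $g$ and $h$ whose images have the right top and socle, so that $g$ does not factor through the admissible epic of $\eta_{w,w'}$ and $h$ does not factor through the admissible monic of $g^{*}\eta_{w,w'}$, checking along the way that the two Hom-hypotheses already force $\Ext^1_\Lambda([u],[w])\neq 0$ so that a useful $g$ is available. This has to be carried out for both shapes of $\eta_{w,w'}$ (the concatenation shape $0\to[w]\to[ww']\to[w']\to 0$ and the overlap shape $0\to[w]\to[v]\oplus[uvt]\to[w']\to 0$), but in each case it is a direct interval computation.

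For the reverse inclusion $\subseteq$, suppose $\eta_{[u]}\in\langle\eta_{w,w'}\rangle$; equivalently, the subfunctor of $\Ext^1_\Lambda$ attached to $\langle\eta_{w,w'}\rangle$ (the closed subbifunctor generated by $\eta_{w,w'}$, in the sense of \cite{E17,BHLR}) fills the one-dimensional space $\Ext^1_\Lambda([u],\tau[u])$. One then tracks the (co)domains through the pullbacks, pushouts, Baer sums and the exact-category closure operations (axiom (A1) and Proposition~\ref{obscure axiom}) that build this subbifunctor from $\eta_{w,w'}$, and, using naturality of $\Ext^1_\Lambda$ together with the Auslander--Reiten formula identifying $\Ext^1_\Lambda([u],\tau[u])$ with a dual Hom-space, one extracts nonzero morphisms $[u]\to[w']$ and $[w]\to\tau[u]$; the degenerate case where $[u]$ is projective, so $\tau[u]$ is undefined and no such $\eta_{[u]}$ exists, is vacuous. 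I expect the genuine difficulty of the proof to be concentrated exactly here: one must control \emph{which} Auslander--Reiten sequences are forced into $\langle\eta_{w,w'}\rangle$ not merely by pullback, pushout and Baer sum but also by closure of admissible monics under composition and by the obscure axiom --- equivalently, one must show that the combinatorial set displayed above is already closed under these operations. The interval arithmetic for $\Hom_\Lambda$ and $\tau$, carried through the two shapes of $\eta_{w,w'}$ and through the indecomposables at the ends of the Auslander--Reiten quiver, is where the bulk of the remaining work lies.
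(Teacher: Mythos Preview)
Your reduction to computing $\mathcal{B}_1$, the set of Auslander--Reiten sequences in the smallest exact structure $\langle\eta_{w,w'}\rangle$, is correct, and your argument for the inclusion $C:=\{\eta_{[u]}\mid \Hom([w],\tau[u])\neq 0\neq\Hom([u],[w'])\}\subseteq\mathcal{B}_1$ via pushout/pullback is exactly the paper's necessity direction. The gap is in the reverse inclusion $\mathcal{B}_1\subseteq C$, which you correctly identify as the crux but do not prove: you propose tracking every closure operation (pullbacks, pushouts, Baer sums, (A1), the obscure axiom) that builds $\langle\eta_{w,w'}\rangle$, and verifying that none of these produces an AR-sequence outside $C$. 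As you yourself note, this is where ``the bulk of the remaining work lies,'' and it is left open.

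The paper sidesteps this difficulty entirely. The key point is that $\E(\mathcal{B})$ is an honest additive subfunctor of the finite-length bifunctor $\Ext^1_\Lambda(-,-)$, and in such a setting an element lies in a subfunctor if and only if the socle of the \emph{plain} (not closed) subfunctor it generates is contained therein. Concretely, the paper invokes from \cite{BBH} that $\E(\mathcal{B})$ is the \emph{maximal} subfunctor of $\Ext^1$ with socle $\mathcal{B}$; hence $\eta_{w,w'}\in\E(\mathcal{B})$ as soon as every AR-sequence obtainable from $\eta_{w,w'}$ by a single pushout--pullback pair lies in $\mathcal{B}$. But a nonzero $h_*g^*\eta_{w,w'}\in\Ext^1([u],\tau[u])$ forces $g\neq 0$ and $h\neq 0$, so such AR-sequences are automatically in $C$. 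Thus sufficiency comes for free from the functorial/socle picture, with no need to chase the exact-category axioms. (The paper also sketches an alternative direct verification, constructing $\eta_{w,w'}$ inside $\E$ by composing admissible monics via axiom (A1) and inducting along the AR-quiver; this too is much more controlled than your proposed argument, since it builds \emph{up} to $\eta_{w,w'}$ from AR-sequences rather than bounding what $\langle\eta_{w,w'}\rangle$ generates.) Your route is not wrong in principle, but it attacks the problem from the harder side; the missing observation is that only the additive-subfunctor closure matters, not the exact-structure closure.
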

\begin{proof}
Necessity follows directly from axioms (A2) and (A2)$^{op}$:  One can easily verify that forming push-outs and pull-backs of the given  exact sequence $\eta_{w,w'}$   along the morphisms from $[w]$ to $\tau[u]$ and from $[u]$ to $[w']$ yields the desired Auslander-Reiten sequences, which thus belong to $\E$.

Sufficiency follows from the fact that exact structures $\E$ on $\A$ correspond to {\em closed} subfunctors of the bifunctor $\Ext^1(-,-)$ on $\A,$ see \cite{DRSS}. Auslander-Reiten theory implies that the socle of $\Ext^1(-,-)$ is given by the Auslander-Reiten sequences, and a closed subfunctor $\E= \E(\mathcal{B})$ is uniquely determined by its socle $\B,$ see \cite{BBH}.
We show in \cite{BBH} that $\E= \E(\B)$ is the maximal subfunctor of $\Ext^1(-,-)$ whose socle is $\B,$ therefore the sequence  $\eta_{w,w'}$ (which induces the socle elements in $\B$ as we showed above when discussing necessity) must belong to $\E= \E(\B)$. 
Here we indicate how to verify this directly from the axioms and leave the details to the reader: 

Consider first the case where $[w]= \tau [u]$ and there is an arrow in the Auslander-Reiten quiver from $[u]$ to $[w']$:
\[
\begin{tikzcd}[sep = small]
 & &  {[a]} \arrow[rd] &   \\
 &  { [b]} \arrow[rd] \arrow[ru] & & {[w']}  \arrow[ll, dotted, "\eta_{[w']}"']  \\
 {[w]} \arrow[rd] \arrow[ru] & & {[u]} \arrow[ll, dotted, "\eta_{[u]}"] \arrow[ru]   & \\
 & {[c]} \arrow[ru] &  &      \\
\end{tikzcd} \] 
%\arrow[llld, dotted, "\eta_{w,w'}"']

{ By assumption, the Auslander-Reiten sequence $\eta_{[u]}$ belongs to $\B$ since there is an irreducible morphism from $[u]$ to $[w']$. Moreover, since the identity is a non-zero morphism, the Auslander-Reiten sequence $\eta_{[w']}$ also belongs to $\B$. We wish to apply axiom (A1) of an exact structure to this situation, however the monos from $\eta_{[u]}$ and $\eta_{[w']}$ cannot be composed directly, only when considering the direct sum of  the split exact sequence $(id_{[c]},0)$  with $\eta_{[w']}$ this becomes possible. 
It turns out that the composition of the mono from $\eta_{[u]}$ with the mono of the short exact sequence $\eta_{[w']} \oplus   (id_{[c]},0)$ yields the mono of the short exact sequence  $\eta_{w,w'} \oplus   (id_{[c]},0)$, which belongs to $\E$ by axiom (A1). Then \cite[Corollary 2.18]{Bu} shows that  $\eta_{w,w'} \in \E.$ 
}
To finish the proof, proceed by induction along paths from $[w]$ to  $\tau [w']$ and from $\tau^{-1}[w]$ to  $[w']$.
\end{proof}
%%%%%%%%%%%%%%%%

\begin{remark}
As $\Lambda$ is Nakayama, the poset of submodules of an indecomposable $[c,d]$  is totally ordered. In particular, for any exact structure $\E$ on $\A$, the poset of proper $\E-$subobjects $\mathcal{S}_{[c,d]}$ is also totally ordered. Hence all indecomposable non $\E-$simple objects have a unique maximal $\E-$subobject. Moreover,  all ($\E-$)subobjects of $[c,d]$ are of the form $[x,d]$ for some  $c \le x \le d$, whereas all quotients are of the form $[c,y]$ for some  $c \le y \le d$, see Figure 1.
\end{remark}

Now we may classify all Artin-Wedderburn exact structures on $\A=\mod \Lambda$ when $\Lambda$ is Nakayama.

\begin{theorem}\label{EAW example}
 Let $\mathcal{B}$ be a set of Auslander-Reiten sequences in $\A=\mod \Lambda$ and $\E=\E(\mathcal{B})$ be the corresponding exact structure on $\A$. Then $\E$ is Artin-Wedderburn if and only if for all Auslander-Reiten sequences $\eta_{[w]} \in \mathcal{B}$ the top module of this sequence is not $\E$-simple.
\end{theorem}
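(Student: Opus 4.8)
The plan is to exploit the logical relations among (AW1), (AW2), (AW3): by the remark following Definition~\ref{A-W}, in a Krull--Schmidt category one always has (AW1)$\Rightarrow$(AW2)$\Rightarrow$(AW3), so $\E=\E(\mathcal B)$ is Artin--Wedderburn if and only if the reverse implication (AW3)$\Rightarrow$(AW1) holds for every object, i.e.\ if and only if every $X$ with $\radical(X)=\{0\}$ has the property that each admissible short exact sequence $A\rightarrowtail X\twoheadrightarrow X/A$ splits. All of the argument is carried out inside the uniserial combinatorics recalled before the statement: every indecomposable is of the form $[c,d]$, the poset $\mathcal S_{[c,d]}$ is totally ordered (so a non-$\E$-simple indecomposable has a \emph{unique} maximal $\E$-subobject), and Theorem~\ref{EB description} decides membership in $\E(\mathcal B)$.

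For the ``only if'' direction I would argue the contrapositive, routing through the Jordan--H\"older property and Theorem~\ref{thm:AW->JH}. Suppose $\eta_{[w]}\in\mathcal B$ has $\E$-simple top module $T$. First note that the middle term of $\eta_{[w]}$ cannot be indecomposable: if it were, it would equal $T$, and then the source of this Auslander--Reiten sequence (which is nonzero, proper, and an $\E$-subobject of $T$ since $\eta_{[w]}\in\E$) would contradict $\E$-simplicity of $T$. Hence $\eta_{[w]}$ has the shape $0\to A\to N\oplus T\to B\to 0$ with $N\neq 0$, and from Figure~\ref{fig:ARquiver} one reads off that $A$ is a submodule of $T$ with \emph{simple} cokernel and $B$ a quotient of $T$ with \emph{simple} kernel, so $A$ and $B$ each have strictly fewer composition factors than $T$. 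The object $N\oplus T$ now admits two $\E$-composition series: one by refining the admissible monic $T\rightarrowtail N\oplus T$ (using that $T$ is $\E$-simple, together with any $\E$-composition series of $N$), the other by refining the admissible monic $A\rightarrowtail N\oplus T$ coming from $\eta_{[w]}$ (using $\E$-composition series of $A$ and of $B\cong(N\oplus T)/A$, via the Fourth $\E$-isomorphism Theorem~\ref{4 iso}). The first has $T$ among its composition factors; the second does not, since every factor occurring there is a subquotient of $A$ or of $B$ and hence has fewer composition factors than $T$. So the two series are inequivalent, $(\A,\E)$ is not Jordan--H\"older, and therefore not Artin--Wedderburn by Theorem~\ref{thm:AW->JH}.

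For the ``if'' direction, assume every $\eta_{[w]}\in\mathcal B$ has non-$\E$-simple top module and take $X$ with $\radical(X)=\{0\}$; I would prove, by induction on the ordinary composition length of $X$, that $X$ is $\E$-semisimple and that every $\E$-sequence $A\rightarrowtail X\twoheadrightarrow X/A$ splits. The inductive step rests on two points. First, if some indecomposable summand $X_i=[c,d]$ of $X$ is not $\E$-simple, it has a unique maximal $\E$-subobject $R_i\neq 0$, and one checks --- using Proposition~\ref{properties of radical}(b), the uniserial structure, and the hypothesis (which constrains the $\E$-simple quotients of $X$, forcing each of them to be annihilated on $R_i$) --- that $R_i$, viewed in $X$, lies in every maximal $\E$-subobject of $X$; this would give $\radical(X)\neq\{0\}$, a contradiction, so $X$ is $\E$-semisimple. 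Second, for $X=\bigoplus_j S_j$ with the $S_j$ $\E$-simple, a hypothetical non-split $\E$-sequence $A\rightarrowtail X\twoheadrightarrow C$ is reduced --- by peeling off summands of $X$ and using Proposition~\ref{obscure axiom}, or by pulling back along an indecomposable summand of $C$ and invoking that $\Ext^1$ is at most one-dimensional over a Nakayama algebra --- to a basic non-split extension $\eta_{u,u'}\in\E$; applying Theorem~\ref{EB description} to this extension exhibits an Auslander--Reiten sequence in $\mathcal B$ whose top module must be $\E$-simple (being, after the reductions, a constituent of $\bigoplus_j S_j$), contradicting the hypothesis. Hence every such $\E$-sequence splits, and $\E$ is Artin--Wedderburn.

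The main obstacle is entirely in the ``if'' direction: controlling the $\E$-simple quotients and $\E$-subobjects of a direct sum, and pinning down exactly which Auslander--Reiten sequence ``governs'' a given non-split $\E$-sequence, so that the non-$\E$-simplicity hypothesis can be brought to bear. Both of these require the Nakayama structure theory preceding the statement --- the shape of Figure~\ref{fig:ARquiver}, the explicit basis of the $\Ext^1$-spaces, and the push-out/pull-back description in Theorem~\ref{EB description} --- whereas the ``only if'' direction needs only the length bookkeeping above together with Theorem~\ref{thm:AW->JH}.
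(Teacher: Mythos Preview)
Your ``only if'' direction is correct and takes a genuinely different route from the paper. The paper argues directly: assuming some $\eta_{[c,d-1]}\in\mathcal B$ has $\E$-simple top module $[c,d]$, it manufactures a specific non-split $\E$-sequence whose middle term is either $\E$-semisimple or has $\E$-radical $\{0\}$, thus violating one of the implications (AW2)$\Rightarrow$(AW1) or (AW3)$\Rightarrow$(AW1). Your argument instead produces two inequivalent $\E$-composition series of the Auslander--Reiten middle term and invokes Theorem~\ref{thm:AW->JH}. The length bookkeeping is sound: every $\E$-composition factor of $A=[c+1,d]$ or $B=[c,d-1]$ is an ordinary subquotient of $A$ or $B$ and therefore has ordinary length strictly less than that of $T=[c,d]$, so $T$ cannot occur in the second series. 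Your approach is shorter and avoids the index chase, at the cost of importing Theorem~\ref{thm:AW->JH} as a black box.

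Your ``if'' direction, however, has a real gap in Step~1. You assert that if $X_i=[c,d]$ is a non-$\E$-simple summand of $X$ with unique maximal $\E$-subobject $R_i=[e,d]$, then $R_i$ lies below every maximal $\E$-subobject of $X$; by Proposition~\ref{properties of radical}(b) this means every admissible epic $p\colon X\twoheadrightarrow S$ with $S$ $\E$-simple kills $R_i$. But the component $p_i\colon X_i\to S$ need not itself be an admissible epic --- it is merely a morphism in $\A$ --- so uniseriality of $X_i$ does not force $p_i$ to factor through $X_i/R_i$. Concretely, writing $S=[a,b]$, a nonzero $p_i$ factors as $[c,d]\twoheadrightarrow[c,b]\hookrightarrow[a,b]$, and $p_i|_{R_i}=0$ if and only if $e>b$; when $a\le c<e\le b\le d$ this fails. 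To rule this out you must show that $[e,b]\rightarrowtail[a,b]$ is then an admissible monic (contradicting $\E$-simplicity of $S$), and for that you need control over Auslander--Reiten sequences $\eta_{[u]}$ with $a\le u_1<c$ --- information that is not supplied by the admissibility of $[e,d]\rightarrowtail[c,d]$ alone. This is exactly where the hypothesis and the admissibility of $p$ itself must be exploited, and you have not done so. The paper sidesteps this difficulty entirely: rather than starting from an arbitrary $X$ with $\radical(X)=\{0\}$, it starts from a non-split $\E$-sequence, reduces to indecomposable end terms $[c,d]\rightarrowtail[a,d]\oplus[b,c]\twoheadrightarrow[a,b]$, and then exhibits an explicit nonzero element of the $\E$-radical of the middle term by listing $\Max(\mathcal S_{[a,d]\oplus[c,b]})$ and intersecting. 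Your Step~2 is similarly under-specified: ``peeling off summands'' does not obviously preserve the middle term $X$, and the reduction to a basic $\eta_{u,u'}$ needs to be made precise before the hypothesis can bite.
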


\begin{proof}
We use the notation from Figure \ref{fig:ARquiver}.
To simplify the presentation of the proof, we introduce phantom zero objects $[x,y]=0$ whenever $x > y$. In this notation, all Auslander-Reiten sequences $$\begin{tikzcd} \eta_{[c,d-1]}:  [c+1,d ] \arrow[r, tail] & {[c, d] \oplus [c+1, d-1]} \arrow[r, two heads] & {[c, d-1]} \end{tikzcd}$$ have two middle terms, with top module $[c, d]$, and where $[c+1, d-1]$ denotes the zero object when $c+1 > d-1.$

We first suppose that there exists an Auslander-Reiten sequence $\eta_{[c,d-1]}$ in $\mathcal B$
 such that the top module $[c, d]$ is $\E-$simple, and we show that this implies $\E$ being not Artin-Wedderburn. 
Let $y \le d-1 $ be such that $[c, y]$ is $\E$-simple and $\eta_{[c,j]} \in \mathcal{B}$ for all $j \in (y,d-1]$. Such a $y$ always exists. Indeed, if $[c, d-1]$ is $\E$-simple then we take $y = d-1$. Else, let $[c, y]$ be an $\E$-simple factor module of $[c, d-1]$, then $y$ satisfies the required conditions by Theorem \ref{EB description}. 

Now, let $x \geq c$ be maximal such that 
there is an indecomposable non-split short exact sequence in $\E$ of the form

\begin{equation}
 \begin{tikzcd} {[x, d]} \arrow[r, tail] & {[c, d] \oplus [x, y]} \arrow[r, two heads] & {[c, y].} \end{tikzcd}
\end{equation} 
Note that $[x, y] \not \cong 0$ by the assumption that $[c,d]$ is $\E$-simple. If $[x, y]$ is $\E$-simple then this sequence shows that the implication (AW2) $\Rightarrow$(AW1) does not hold. Suppose that $[x, y]$ is not $\E$-simple and let $[w, y]$ be its unique maximal $\E-$subobject, note that $w>x$. 
Thus $$\radical ( [c, d] \oplus [x, y]) \subseteq \inter_{[c, d] \oplus [x, y]}([c, d] \oplus [w, y], [x,d] ).$$ 
Observe that the $\E$-subobjects of $[x,d]$ are of the form $[i,d]$ with $i \in (x, d]$ and the only possible indecomposable $\E$-subobjects of $[c, d] \oplus [w, y]$ are of the form $[j, y]$ with $j \in (w, y]$ or $[w,d]$. 
We deduce that, if $\radical ( [c, d] \oplus [x, y]) \neq \{0\}$ then $[w,d]$ is an $\E$-subobject of $[c, d] \oplus [w, y]$. But this is a contradiction to the maximality of $x$, thus the implication (AW3) $\Rightarrow$ (AW1) does not hold.
\medskip

For the converse, consider a non-split $\E-$sequence. Since $\Ext^1(-,-)$ is an additive bifunctor, it suffices to consider short exact sequences with indecomposable end terms, which are for Nakayama algebras of the form 
\[ \begin{tikzcd} {[c, d]} \arrow[r, tail] & {[a, d] \oplus [b, c]} \arrow[r, two heads] & {[a, b]} \end{tikzcd} \] where $[b,c]$ may denote the zero object. Note that the inequalities $a \leq c-1 \leq b \leq d-1$ must hold. By assumption and Theorem \ref{EB description};
%, there is an Auslander-Reiten sequence 
%\[ \begin{tikzcd} \tau [w ] \arrow[r, tail] & {[i..j] \oplus [u]} \arrow[r, two heads] & {[w]} \end{tikzcd} \] in $\mathcal{B}$. Thus, by assumption, 
$[a, d]$ is not $\E-$simple. Thus, since $\A$ is Krull-Schmidt, $[a, d] \oplus [b, c]$ is not $\E-$semisimple. This shows that the implication (AW2) $\Rightarrow$ (AW1) holds. It remains to show that $\radical ({[a, d] \oplus [b, c]}) \neq \{0\}$. 
 Observe that
 \[ \mathcal{S}_{[a, d] \oplus [b, c]} = \Big\{[c, d], \,[i, d] \oplus [c, b], \, [a, d] \oplus [j,b ] \mbox{ with } [i,d ] \in \mathcal{S}_{[a, d]}, \, [j, b] \in \mathcal{S}_{[c,b ]} \Big\} \] 
 Let $[x, d] \rightarrowtail [a,d]$ be the unique maximal $\E-$subobject which exists by assumption that the top module $[a,d]$ is not $\E-$simple, and let $[y,b] \rightarrowtail [c, b]$ be the unique maximal $\E-$subobject of $[c, b]$ if it exists or the identity if not. Then
 \[ \Max(\mathcal{S}_{[a, d] \oplus [c, b]}) \subseteq \Big\{ [c, d], [x, d] \oplus [c,b], \, [a,d ] \oplus [y,b] \Big\}. \]

First suppose that $x \geq c$. Then, as $a <c$ by Theorem \ref{EB description}, $[x,d] \rightarrowtail [c,d]$ and 
 \[ \radical \Big([a, d] \oplus [c, b] \Big) \supseteq \inter_{[a, d] \oplus [c, b]} \Big([c, d], [x, d] \oplus [c,b], \, [a,d ] \oplus [y,b] \Big) = \Big\{ [x,d] \Big\} \neq 0. \]

Now suppose that $x <c$.  Then $b>x$ as $c-1 \leq b$.  Now
\[\radical \Big([a, d] \oplus [c,b] \Big) \supseteq \inter_{[a, d] \oplus [c, b]} \Big([c, d], [x, d] \oplus [c,b], \, [a,d ] \oplus [y,b] \Big) = \Big\{ [x,d] \oplus [y,b] \Big\} \neq 0 \] and we are done.
\end{proof}
\color{black}

Note that Enomoto studies in \cite{E19}  the Jordan-H\"older property for torsion-free classes in the module category of a Nakayama algebra endowed with the maximal exact structure.  We investigate now when $\A=\mod \Lambda$ with any exact structure $\E$ is Jordan-H\"older:

\begin{theorem}\label{Nakayama}
Let $\Lambda$ be a Nakayama algebra, and denote $\A = \mod \Lambda.$
Then an exact category $(\A,\E)$ is $\E-$Artin-Wedderburn precisely when it is Jordan-H\"older.
\end{theorem}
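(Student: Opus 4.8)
The plan is to prove the two implications separately, one of which is essentially free. If $(\A,\E)$ is $\E$-Artin-Wedderburn, then, since $\A=\mod\Lambda$ is a Krull-Schmidt category, Theorem~\ref{thm:AW->JH} shows at once that $(\A,\E)$ is a Jordan-H\"older exact category. So the content of the theorem is the converse: a Jordan-H\"older exact structure on $\mod\Lambda$ must be Artin-Wedderburn. I would prove this by contraposition, using the combinatorial classification of Artin-Wedderburn structures from Theorem~\ref{EAW example}.

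So suppose $\E=\E(\mathcal{B})$ is not Artin-Wedderburn. By Theorem~\ref{EAW example} there is an Auslander-Reiten sequence $\eta_{[c,d-1]}\in\mathcal{B}$,
\[\begin{tikzcd} {[c+1,d]} \arrow[r,tail] & {[c,d]\oplus[c+1,d-1]} \arrow[r,two heads] & {[c,d-1]} \end{tikzcd}\]
whose top module $[c,d]$ is $\E$-simple, and the goal becomes to exhibit an object with two non-equivalent $\E$-composition series. First I would discard the degenerate case $d=c+1$: there $\eta_{[c,d-1]}=\eta_{[c,c]}$ takes the shape ${[c+1,c+1]}\rightarrowtail[c,c+1]\twoheadrightarrow[c,c]$, and its admissible monic ${[c+1,c+1]}\rightarrowtail[c,c+1]=[c,d]$ is a proper (by Remark~\ref{zero coker}, as the cokernel $[c,c]$ is nonzero) nonzero $\E$-subobject of $[c,d]$, contradicting $\E$-simplicity. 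Hence $d\ge c+2$, so $[c+1,d-1]\neq 0$, and I would work with the middle term $X:=[c,d]\oplus[c+1,d-1]$ of $\eta_{[c,d-1]}$.

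Because $\Lambda$ is finite dimensional, every object of $\mod\Lambda$ is $\E$-Artinian and $\E$-Noetherian --- its poset of $\E$-subobjects injects into the finite poset of submodules --- so Proposition~\ref{hopkinslevitzki} guarantees that every object occurring below has a finite $\E$-composition series. Now I would build two such series for $X$. First, $[c+1,d-1]\rightarrowtail X$ is a split admissible monic with $\E$-simple cokernel $[c,d]$, so, using the fourth $\E$-isomorphism theorem (Proposition~\ref{4 iso}), any $\E$-composition series of $[c+1,d-1]$ extends along this monic to an $\E$-composition series of $X$ whose top composition factor is $[c,d]$. Second, the admissible monic $[c+1,d]\rightarrowtail X$ from the Auslander-Reiten sequence has cokernel $[c,d-1]$, so (again via Proposition~\ref{4 iso}) an $\E$-composition series of $[c+1,d]$ and one of $[c,d-1]$ splice together into an $\E$-composition series of $X$ whose composition factors are precisely those of $[c+1,d]$ together with those of $[c,d-1]$. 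Finally I would compare the two: $\dim_k[c,d]=d-c+1$, whereas $\dim_k[c+1,d]=\dim_k[c,d-1]=d-c$, and since every composition factor of a module is a subquotient of it, the $\E$-simple $[c,d]$ cannot be a composition factor of $[c+1,d]$ or of $[c,d-1]$. Thus the first series has $[c,d]$ among its composition factors while the second does not, so the two series are not equivalent and $(\A,\E)$ is not Jordan-H\"older. This establishes the contrapositive, and together with the first paragraph the theorem.

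I expect the main obstacle to be not the logical skeleton but making the candidate object $X$ work uniformly: verifying that $[c,d]$ is genuinely ``too large'' to occur as a composition factor of $[c+1,d]$ or $[c,d-1]$ --- which is exactly the point at which we use that $\Lambda$ is Nakayama, so that these indecomposables are uniserial intervals whose $k$-dimension equals their length --- and disposing of the boundary case $d=c+1$, where the middle term of the Auslander-Reiten sequence degenerates to $[c,d]$ itself and one argues directly from $\E$-simplicity.
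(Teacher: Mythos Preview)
Your proof is correct and takes a genuinely different route from the paper's. The paper proves the converse direction by invoking an external counting result of Enomoto \cite[Theorem~4.13]{E19}: in a Jordan--H\"older exact category the number of $\E$-simples equals the number of indecomposable $\E$-projectives, hence $|\mathrm{ind}(\A)|=|\B|+s$; since the map sending an Auslander--Reiten sequence to its top module is injective, the indecomposables are exhausted by the $\E$-simples together with the top modules, forcing these two sets to be disjoint and thus no top module is $\E$-simple. Your argument instead works by explicit contraposition: given an $\eta_{[c,d-1]}\in\B$ with $\E$-simple top $[c,d]$, you build two concrete $\E$-composition series of the middle term and separate them by the dimension obstruction $\dim_k[c,d]>\dim_k[c{+}1,d]=\dim_k[c,d{-}1]$. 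This is more elementary and entirely self-contained within the paper, at the cost of a small case analysis; the paper's approach is shorter but imports a nontrivial theorem from \cite{E19}.

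Two small points to tighten. First, you invoke Proposition~\ref{hopkinslevitzki} to obtain $\E$-composition series, but that proposition is stated under the hypothesis that $(\A,\E)$ is Jordan--H\"older, which is precisely what you are disproving; cite Lemma~\ref{c.s} instead (it gives existence of a composition series from $\E$-Artinian and $\E$-Noetherian without any Jordan--H\"older assumption). Second, the claim that the poset of submodules is \emph{finite} is not literally true for arbitrary objects of $\mod\Lambda$ over an infinite field; what you actually need, and what holds, is ACC and DCC on submodules (finite $k$-dimension suffices), and in any case the three modules $[c{+}1,d{-}1]$, $[c{+}1,d]$, $[c,d{-}1]$ for which you need composition series are uniserial, so their submodule posets really are finite chains.
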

\begin{proof}
The $\E-$Artin-Wedderburn categories are  Jordan-H\"older by Theorem \ref{thm:AW->JH}.
Conversely, assume that $(\A,\E)= (\A,\E(\B))$ is  Jordan-H\"older.
By  \cite[Theorem 4.13]{E19}, we know that the number $s$ of $\E-$simple objects equals the number $p$ of $\E-$projective indecomposable objects. 
Every non-$\E-$projective indecomposable admits an Auslander-Reiten sequence in $\B$, therefore 
\[ s = p = |ind (\A)| - |\B| \]
where $ind (\A)$ denotes the (isoclasses of) indecomposables in $\A.$ We conclude
\[ |ind (\A)| =  |\B| +s, \]
and parametrise the set of indecomposables by the $\E-$simples together with the top module for every Auslander-Reiten sequence. 
Clearly this top module cannot be $\E-$simple in this case,  {thus by Theorem \ref{EAW example}, $(\A,\E)$ is $\E-$Artin-Wedderburn. }
\end{proof}

%%%%%%%%%%%%%%%%%%%%%%%%%%%%%%%%%%%%%%%%%%%%%%%%%%%%%%%%%%%
\section{The length function }
%%%%%%%%%%%%%%%%%%%%%%%%%%%%%%%%%%%%%%%%%%%

In this section, we consider a Jordan-H\"older exact category $(\A, \E)$ and we study the $\E-$Jordan-H\"older length function  $l_{\E}$ that the $\E-$Jordan-H\"older theorem  allows us to define over the set $Obj\mathcal{A}$ of isomorphism classes of objects. Throughout, $(\A,\E)$ denotes an $\E-$finite essentially small Jordan-H\"older exact category.
To simplify notation, we will not distinguish here between the isomorphism class $[X]$ of an object $X$ of $\A$ and the object $X$.

%%%%%%%%%%%%%%%%%%%%%%%%%%%%%%%%%%%%%%%%%%%

\begin{definition}\label{length} We define the $\E-$Jordan-H\"older length $l_{\mathcal{E}}(X)$ of 
an object $X$ in $\A$
as the length of an $\E-$composition series of $X$. That is $l_{\mathcal{E}}(X)=n$ if and only if there exists an $\E-$composition series 
\[
\begin{tikzcd}0=X_0 \arrow[r, tail] & X_1 \arrow[r, tail] & \dots \arrow[r, tail]            & X_{n-1} \arrow[r, tail] & X_n=X. \end{tikzcd}\] 
We say in this case that $X$ is $\E-$finite. 
If no such bound exists, we say that $X$ is $\E-$infinite.
Clearly, isomorphic objects have the same length, and therefore this definition gives rise to a length function $l_{\mathcal{E}}: Obj\mathcal{A}\rightarrow \mN \cup \{ \infty\}$ defined on isomorphism classes.
\end{definition}

Now we prove some corollaries of the $\E-$Jordan-H\"older theorem:

\begin{cor}\label{s.e.s length} Let \[X \; \rightarrowtail Z \twoheadrightarrow  Y \] be an admissible short exact sequence of finite length objects. Then

$$l_{\mathcal{E}}(Z) = l_{\mathcal{E}}(X) + l_{\mathcal{E}}(Y).$$

\end{cor}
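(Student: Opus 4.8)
The plan is to combine a composition series of $X$ with one of $Y$ to produce a composition series of $Z$, then invoke the $\E$-Jordan-H\"older property to conclude that its length is independent of choices. Concretely, start with the given admissible short exact sequence $\begin{tikzcd} X \arrow[r, tail, "i"] & Z \arrow[r, two heads, "p"] & Y \end{tikzcd}$. Let $0 = X_0 \rightarrowtail X_1 \rightarrowtail \dots \rightarrowtail X_a = X$ be an $\E$-composition series of $X$, so $a = l_\E(X)$, and let $0 = Y_0 \rightarrowtail Y_1 \rightarrowtail \dots \rightarrowtail Y_b = Y$ be an $\E$-composition series of $Y$, so $b = l_\E(Y)$. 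Composing the $X_j \rightarrowtail X$ with $i$ gives admissible monics $X_j \rightarrowtail Z$ (axiom (A1)), yielding the first part of a chain
\[ 0 = X_0 \rightarrowtail X_1 \rightarrowtail \dots \rightarrowtail X_a = X \rightarrowtail Z \]
in which each cokernel $X_{j+1}/X_j$ is $\E$-simple. It remains to refine the final step $X \rightarrowtail Z$ into $b$ steps with $\E$-simple cokernels.

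For that, I would use the Fourth $\E$-Isomorphism Theorem (Proposition \ref{4 iso}) applied to the sequence $X \rightarrowtail Z \twoheadrightarrow Z/X$. Note $Z/X \cong Y$ via $p$. The poset isomorphism $\{ M \mid X \rightarrowtail M \rightarrowtail Z\} \leftrightarrow \P^\E_{Z/X} = \P^\E_{Y}$ sends $M \mapsto M/X$; let $Z_j$ denote the $\E$-subobject of $Z$ corresponding to $Y_j \in \P^\E_Y$ under the inverse map (explicitly, $Z_j$ is the pullback of $Y_j \rightarrowtail Y$ along $p$). Then $X = Z_0 \rightarrowtail Z_1 \rightarrowtail \dots \rightarrowtail Z_b = Z$ is a chain of admissible monics, and since the poset isomorphism carries the short exact sequence $X \rightarrowtail Z_j \twoheadrightarrow Y_j$ over, one gets $Z_{j+1}/Z_j \cong Y_{j+1}/Y_j$, which is $\E$-simple. (The isomorphism $Z_{j+1}/Z_j \cong Y_{j+1}/Y_j$ can be read off from the $3\times 3$-type diagram obtained by stacking the pullback squares defining $Z_j$ and $Z_{j+1}$, exactly as in the proof of Proposition \ref{4 iso}; alternatively, apply the proposition's poset isomorphism internally to $Z_{j+1}$.) Splicing, we obtain an $\E$-composition series of $Z$
\[ 0 = X_0 \rightarrowtail \dots \rightarrowtail X_a = X = Z_0 \rightarrowtail Z_1 \rightarrowtail \dots \rightarrowtail Z_b = Z \]
of length $a + b = l_\E(X) + l_\E(Y)$.

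Finally, since $(\A,\E)$ is a Jordan-H\"older exact category, any two $\E$-composition series of $Z$ have the same length, so $l_\E(Z)$ is well-defined and equals the length of the series just constructed, namely $l_\E(X) + l_\E(Y)$. The only point requiring care — and the main obstacle — is verifying that the composite monics $X_j \rightarrowtail Z$ genuinely have $\E$-simple cokernel (equivalently, that the subquotients $X_{j+1}/X_j$ computed inside $X$ agree with those computed inside $Z$); this follows because $X_j \rightarrowtail X_{j+1}$ is an admissible monic with a fixed cokernel, and cokernels are unique, so composing with $X_{j+1} \rightarrowtail Z$ does not alter $X_{j+1}/X_j$ — more precisely one applies \cite[Lemma 3.5]{Bu} (the Noether-type lemma used repeatedly in the proof of Proposition \ref{4 iso}) to the relevant stacked diagram. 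A secondary point is ensuring the two halves of the spliced series are compatible, i.e.\ that the top of the first half is literally $X = Z_0$; this is immediate from the construction since $Z_0$ is by definition the pullback of $0 \rightarrowtail Y$, which is $X$.
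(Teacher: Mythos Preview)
Your argument is correct and follows essentially the same route as the paper: take composition series of $X$ and $Y$, use the Fourth $\E$-isomorphism theorem (Proposition~\ref{4 iso}) to lift the series of $Y\cong Z/X$ to a chain $X=Z_0\rightarrowtail\cdots\rightarrowtail Z_b=Z$ with $Z_{j+1}/Z_j\cong Y_{j+1}/Y_j$ via \cite[Lemma 3.5]{Bu}, and splice. The only cosmetic slip is the phrase ``the composite monics $X_j\rightarrowtail Z$ genuinely have $\E$-simple cokernel'' --- you mean the successive quotients $X_{j+1}/X_j$, as your parenthetical immediately clarifies.
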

\begin{proof}
We know that $X$ is a subobject of $Z$ and that $Y\cong Z/X$. We consider the following composition series of $X$ and $Y$
\[\begin{tikzcd}0=X_0 \arrow[r, tail] & X_1 \arrow[r, tail] & \dots \arrow[r, tail]            & X_{n-1} \arrow[r, tail] & X_n=X \end{tikzcd}\]
\[\begin{tikzcd}0=Z_0/X \arrow[r, tail] & Z_1/X \arrow[r, tail] & \dots \arrow[r, tail]            & Z_{l-1}/X \arrow[r, tail] & Z_l/X\cong Y \end{tikzcd}\]
where we us the fourth $\E-$isomorphism theorem (Proposition \ref{4 iso}) to obtain the particular structure for the composition series of $Y$. 
Since  \[(Z_{i+1}/X)/(Z_{i}/X)\cong (Z_{i+1}/Z_{i})\]
by \cite[Lemma 3.5]{Bu}, the following is a composition series of $Z$:
\[\begin{tikzcd}[row sep=tiny]0=X_0 \arrow[r, tail] & X_1 \arrow[r, tail] & \dots \arrow[r, tail]            & X_{n-1} \arrow[r, tail] & X_n=X=Z_0 \arrow[r,tail, "i"] & {} \\ & \arrow[r, tail,"i"] & Z_1 \arrow[r, tail] & \dots \arrow[r, tail]            & Z_{l-1} \arrow[r, tail] & Z_l=Z\end{tikzcd}\]
Thus $$l_{\mathcal{E}}(Z)= n + l = l_{\mathcal{E}}(X) + l_{\mathcal{E}}(Y)$$
\end{proof}

We show now that the function $l_{\mathcal{E}}$ is a \emph{length function} in the sense of \cite{Kr07}:
\begin{definition}\label{Ameasure}  A {\em measure for a poset $\cS$} is a morphism of posets $\mu :\cS\rightarrow P$ where $(\mathcal{P}, \leq)$ is a totally ordered set.
A measure $\mu$ is called a \emph{length function} when $\mathcal{P} =\mathbb{N}$ with the natural order. 
\end{definition}

\begin{theorem} \label{lengththm} The function $l_{\mathcal{E}}$ of an $\E-$finite Jordan-H\"older exact category $(\A, \E)$ is a \emph{length function} for the poset $Obj\mathcal{A}$.
\end{theorem}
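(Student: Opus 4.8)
The plan is to show that $l_{\mathcal{E}}$ is a morphism of posets $Obj\mathcal{A} \to \mathbb{N}$, where $Obj\mathcal{A}$ carries the partial order induced by admissible monics (i.e.\ $X \leq Y$ iff there exists an admissible monic $X \rightarrowtail Y$) and $\mathbb{N}$ carries its natural total order. By Definition \ref{Ameasure}, this is precisely what it means for $l_{\mathcal{E}}$ to be a length function for this poset. Since the excerpt assumes $(\A,\E)$ to be $\E$-finite, $l_{\mathcal{E}}(X)$ is a well-defined natural number for every $X$, and by the $\E$-Jordan-H\"older property it is independent of the chosen composition series, so the map is well-defined on isomorphism classes; this was already observed in Definition \ref{length}.

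First I would recall that $Obj\mathcal{A}$ is indeed a poset under the relation $X \leq Y \iff (X \text{ is an } \E\text{-subobject of } Y)$, using antisymmetry from Remark \ref{zero coker} (a morphism which is simultaneously an admissible monic and epic is an isomorphism) together with the fact that composites of admissible monics are admissible monics (axiom (A1)), which gives transitivity; reflexivity is (A0). Then the core of the argument is monotonicity: if $X \leq Y$, so there is an admissible monic $i\colon X \rightarrowtail Y$, I must show $l_{\mathcal{E}}(X) \leq l_{\mathcal{E}}(Y)$. For this I would take the admissible short exact sequence $X \rightarrowtail Y \twoheadrightarrow Y/X$ (the cokernel exists since $i$ is an admissible monic), note that $X$ and $Y/X$ are again $\E$-finite because $\E$-finiteness was assumed for every object of $\A$, and apply Corollary \ref{s.e.s length} to obtain
\[
l_{\mathcal{E}}(Y) = l_{\mathcal{E}}(X) + l_{\mathcal{E}}(Y/X) \geq l_{\mathcal{E}}(X),
\]
since $l_{\mathcal{E}}(Y/X) \geq 0$. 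This completes the verification that $l_{\mathcal{E}}$ is a morphism of posets into the totally ordered set $\mathbb{N}$, hence a length function by Definition \ref{Ameasure}.

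I do not anticipate a serious obstacle here: the statement is essentially a repackaging of Corollary \ref{s.e.s length} together with the bookkeeping that the ambient category is $\E$-finite (so the length function lands in $\mathbb{N}$ rather than $\mathbb{N} \cup \{\infty\}$) and Jordan-H\"older (so it is well-defined). The only point requiring a little care is confirming that $Obj\mathcal{A}$ really is a poset in the relevant sense — in particular antisymmetry up to isomorphism, which is exactly the content of Remark \ref{zero coker} — and that the order on $Obj\mathcal{A}$ used here is the subobject order, so that the hypothesis of Corollary \ref{s.e.s length} (an admissible short exact sequence with the two outer terms of finite length) is available precisely when $X \leq Y$. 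Everything else is immediate from additivity of $l_{\mathcal{E}}$ on admissible short exact sequences and nonnegativity of lengths.
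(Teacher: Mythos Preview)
Your proposal is correct and follows essentially the same approach as the paper: both verify that $Obj\mathcal{A}$ is a poset under the $\E$-subobject relation (the paper cites \cite[Proposition~6.11]{BHLR} for this, while you verify the axioms directly using Remark~\ref{zero coker} and (A0), (A1)), and both then deduce monotonicity of $l_{\mathcal{E}}$ from Corollary~\ref{s.e.s length} applied to the sequence $X \rightarrowtail Y \twoheadrightarrow Y/X$.
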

\begin{proof}The function $l_{\mathcal{E}}: Obj\mathcal{A}\rightarrow \mN $ is defined on the set $Obj\A$, which is  partially ordered by the $\E-$subset relation $X\subset_\E Y$, see \cite[Proposition 6.11]{BHLR}. Moreover,
consider $X$ and $Y$ in $Obj\A$ with $X\subset_\E Y$. Then by Corollary \ref{s.e.s length} we have
\[ l_{\mathcal{E}}(X)\leq l_{\mathcal{E}}(Y),\] 
so $l_{\E}$ is a morphism of posets.
\end{proof}

As a consequence of the previous result, 
 an $\E-$finite object is an object with $\E-$finite length.

\begin{lemma}\label{c.s}Let $(\A, \E)$ be an exact category. An $\E-$Artinian and $\E-$Noetherian object $X$ of $(\A, \E)$ admit an $\E-$composition series.
\end{lemma}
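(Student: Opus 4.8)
The plan is to show that any $\E$-Artinian and $\E$-Noetherian object $X$ admits a finite $\E$-composition series by building it from the bottom up, using the Noetherian condition to terminate the construction and the Artinian condition to guarantee the existence of a suitable subobject at each step. First I would observe that if $X = 0$ there is nothing to prove, so assume $X \neq 0$. I would then argue that any non-zero $\E$-Noetherian object admits a maximal $\E$-subobject: the poset $\mathcal{S}_X$ of proper $\E$-subobjects is non-empty (it contains $0$), and an ascending chain that never stabilises would contradict the $\E$-Noetherian hypothesis, so $\mathcal{S}_X$ has a maximal element by the ascending chain condition; equivalently, by Remark \ref{remark poset P_X} this produces an admissible monic $X_{n-1} \rightarrowtail X$ whose cokernel is $\E$-simple.

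Next I would set up the recursion. Having found a maximal $\E$-subobject $X_{n-1} \rightarrowtail X$ with $\E$-simple cokernel, I would want to replace $X$ by $X_{n-1}$ and repeat. For this I need to check two things: that $X_{n-1}$ is again both $\E$-Artinian and $\E$-Noetherian, and that the process terminates. The first point follows because $\E$-subobjects of $X_{n-1}$ are in particular $\E$-subobjects of $X$ (composing admissible monics, using axiom (A1)), so any ascending or descending chain of $\E$-subobjects of $X_{n-1}$ is such a chain in $X$ and hence stabilises. Iterating, I obtain a descending chain
\[
\begin{tikzcd} \cdots \arrow[r, tail] & X_{n-2} \arrow[r, tail] & X_{n-1} \arrow[r, tail] & X_n = X \end{tikzcd}
\]
of proper admissible monics, each with $\E$-simple cokernel. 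Since $X$ is $\E$-Artinian, this descending chain must become stationary; because each inclusion is \emph{proper} (cokernel non-zero, by Remark \ref{zero coker}), stationarity can only occur by reaching the zero object. This yields a finite chain $0 = X_0 \rightarrowtail X_1 \rightarrowtail \cdots \rightarrowtail X_n = X$ with all $\E$-simple cokernels, i.e. an $\E$-composition series in the sense of Definition \ref{composition series}.

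The main obstacle, and the step that needs the most care, is the claim that a non-zero $\E$-Noetherian object has a \emph{maximal} proper $\E$-subobject — i.e. that the ascending chain condition on the poset $\mathcal{S}_X$ genuinely forces the existence of maximal elements. In a general poset ACC does imply every non-empty subset has a maximal element, but one must be slightly careful that $\mathcal{S}_X$ is a \emph{set} (this is why we work with isomorphism classes, as in Definition \ref{poset of subobjects}) and that "increasing sequence of $\E$-subobjects" in the definition of $\E$-Noetherian is strong enough to rule out an infinite strictly ascending chain built by transfinite-free, countable choice — which it is, since one builds the chain one step at a time. A secondary subtlety is ensuring the iterated admissible monics compose to admissible monics, which is exactly axiom (A1), and that "proper at each stage" is preserved, which is Remark \ref{zero coker}. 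Once these bookkeeping points are in place the argument is routine.
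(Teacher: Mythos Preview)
Your argument is correct. It is the mirror image of the paper's proof: the paper builds an ascending chain $0 = X_0 \rightarrowtail X_1 \rightarrowtail \cdots$ from the bottom, using the $\E$-Artinian hypothesis at each step to produce a minimal $\E$-subobject above the current one (so that the successive quotient is $\E$-simple), and then invokes the $\E$-Noetherian hypothesis to force the chain to terminate at $X$. You instead descend from $X$, using Noetherianity to find maximal proper $\E$-subobjects and Artinianity to terminate at $0$. The two arguments are formally dual and equally valid; neither buys anything the other does not, though your write-up is more explicit about the bookkeeping (closure under (A1), Remark \ref{zero coker}, and why ACC yields maximal elements) than the paper's rather terse version.
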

\begin{proof}Let $X$ be an $\E-$Artinian and $\E-$Noetherian object. Using the artinian hypothesis, one can construct a sequence of strict $\E-$subobjets with $\E-$simple quotients :
\[
\begin{tikzcd}0=X_0 \arrow[r, tail] & X_1 \arrow[r, tail] & \dots   \end{tikzcd}\]
Since $X$ is noetherian too, this sequence became stationary and end with $X$ at some point. Finally, this sequence give an $\E-$composition series in the sense of \ref{composition series}.
\end{proof}
The following results improves and uses \cite[Lemma 6.5]{BHLR} and \ref{c.s}:
\begin{theorem}\label{hopkinslevitzki} Let $(\A, \E)$ be a Jordan-H\"older exact category. An object $X$ of $(\A, \E)$ is $\E-$Artinian and $\E-$Noetherian if and only if it has an $\E-$finite length.
\end{theorem}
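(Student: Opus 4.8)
The plan is to prove both implications, using Lemma \ref{c.s} and the length function theory already established. The forward direction is essentially Lemma \ref{c.s} together with Corollary \ref{s.e.s length}: if $X$ is $\E$-Artinian and $\E$-Noetherian, then by Lemma \ref{c.s} it admits an $\E$-composition series, so by Definition \ref{length} it has finite $\E$-length. (Here the Jordan--H\"older hypothesis guarantees that the length is well-defined, independent of the chosen series, though for this direction we only need that \emph{some} finite series exists.)

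For the converse, suppose $X$ has $\E$-finite length, say $l_{\E}(X) = n$. I would argue by contradiction for the Noetherian part: suppose there is a strictly increasing chain of proper $\E$-subobjects $X_1 \subsetneq_\E X_2 \subsetneq_\E \cdots$ of $X$ that does not stabilise. Each inclusion $X_i \rightarrowtail X_{i+1}$ is a proper admissible monic, hence $\Coker$ is non-zero by Remark \ref{zero coker}, and composing with $X_{i+1}\rightarrowtail X$ (using axiom (A1)) shows each $X_i$ is an $\E$-subobject of $X$ with $X/X_i \neq 0$; by Corollary \ref{s.e.s length} applied to $X_i \rightarrowtail X_{i+1} \twoheadrightarrow X_{i+1}/X_i$ we get $l_{\E}(X_i) < l_{\E}(X_{i+1})$, and applying it again to $X_i \rightarrowtail X \twoheadrightarrow X/X_i$ gives $l_{\E}(X_i) < l_{\E}(X) = n$ for all $i$ (the inequality is strict since $X/X_i$, being a proper nonzero quotient, has length at least one — one still needs that a nonzero $\E$-finite object has positive length, which is immediate from the definition). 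This produces a strictly increasing sequence of natural numbers bounded by $n$, a contradiction. The Artinian part is entirely dual: a strictly decreasing chain $\cdots \subsetneq_\E X_2 \subsetneq_\E X_1 \subseteq_\E X$ would yield a strictly decreasing sequence of non-negative integers, again impossible.

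The one point requiring a little care — and what I expect to be the main (minor) obstacle — is making sure the length function behaves strictly under \emph{proper} inclusion, i.e. that $A \subsetneq_\E B$ with $A \not\cong B$ forces $l_{\E}(A) < l_{\E}(B)$. This needs the observation that the cokernel of a proper admissible monic is nonzero (Remark \ref{zero coker}) and hence, being an $\E$-finite object, has length $\geq 1$; then Corollary \ref{s.e.s length} gives $l_{\E}(B) = l_{\E}(A) + l_{\E}(B/A) \geq l_{\E}(A) + 1$. One also needs the subobjects appearing in the chains to themselves be $\E$-finite, which follows because any $\E$-subobject $Y$ of the $\E$-finite object $X$ sits in an admissible sequence $Y \rightarrowtail X \twoheadrightarrow X/Y$, and Corollary \ref{s.e.s length} (read as an equation of natural numbers, all terms finite once $l_\E(X)$ is) forces $l_\E(Y) \leq l_\E(X) < \infty$; strictly speaking one invokes here that the pieces of the finite composition series of $X$ refine through $Y$, but this is exactly the content already used in the proof of Theorem \ref{lengththm}. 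With these remarks in place the argument closes.
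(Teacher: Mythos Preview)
Your proof is correct and follows essentially the same strategy as the paper: Lemma \ref{c.s} handles the direction ($\E$-Artinian $+$ $\E$-Noetherian $\Rightarrow$ finite length), and the length function bounds chains for the other direction. The paper's argument for the latter is a single sentence (``any increasing or decreasing sequence of $\E$-subobjects of $X$ must become stationary''), whereas you spell out the mechanism via Corollary \ref{s.e.s length} and Remark \ref{zero coker}; the extra care you take over strictness and finiteness of subobjects is warranted but does not depart from the paper's approach.
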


\begin{proof}
For an $\E-$finite object $X$ of length $l_{\E}(X)=n \in \mathbb{N}$, the composition series is of length $n$. Thus any increasing or decreasing sequence of $\E-$subobjects of $X$ must become stationary and $X$ is $\E-$Artinian and $\E-$Noetherian.\\ Conversely, let $X$ be an $\E-$Artinian and $\E-$Noetherian object, then $X$ admits an $\E-$composition series by \ref{c.s}. Since $\E$ satisfies the Jordan-H\"older property, all composition series ending with $X$ have the same finite length, so $X$ is $\E-$finite.
\end{proof}

\begin{remark}
Note that a length function for exact categories in general was studied in  \cite[Section 6]{BHLR}. The notion there was defined as maximum over all lengths of an $\E-$composition series; in the case of an $\E-$Jordan-H\"older category all composition series of an object have the same length, so the definition we use here is compatible with the one from \cite{BHLR}. 
\end{remark}

\begin{definition} We denote by $(Ex({\A}), \subseteq)$ the poset of exact structures $\E$ on $\A$, where the partial order is given by containment $\E' \subseteq \E$.
This \emph{containment} partial order is studied in \cite[Section 4]{BHLR}.
\end{definition}

We conclude by noting that, similarly to \cite[Lemma 8.1]{BHLR}, the $\E-$Jordan H\"older length function can only decrease under reduction of exact structures:

\begin{proposition} \label{lengthreduction}
 If $\E$ and $\E'$ are exact structures on $\A$ such that $\E' \subseteq \E$, then $l_{\E'}(X) \leq l_{\E}(X)$ for all objects $X$ in $\A$.
\end{proposition}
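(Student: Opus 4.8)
The plan is to reduce the statement to the additivity of the $\E$-Jordan--H\"older length function (Corollary \ref{s.e.s length}). The starting observation is that, since $\E' \subseteq \E$, every $\E'$-admissible monic is an $\E$-admissible monic and, cokernels being uniquely determined, every admissible short exact sequence lying in $\E'$ also lies in $\E$. In particular every increasing or decreasing chain of $\E'$-subobjects of an object $X$ is a chain of $\E$-subobjects, so the standing $\E$-finiteness of $\A$ forces every object to be $\E'$-Artinian and $\E'$-Noetherian; by Lemma \ref{c.s}, $X$ therefore admits an $\E'$-composition series, and $l_{\E'}(X)$ is finite.

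Next I would fix an $\E'$-composition series
\[
0 = X_0 \rightarrowtail X_1 \rightarrowtail \dots \rightarrowtail X_m = X,
\]
so that $m = l_{\E'}(X)$, with each cokernel $Q_i := X_{i+1}/X_i$ being $\E'$-simple, hence non-zero, for $0 \le i \le m-1$. By the preceding observation each short exact sequence $X_i \rightarrowtail X_{i+1} \twoheadrightarrow Q_i$ lies in $\E$, and all of its terms are $\E$-finite since $\A$ is $\E$-finite. Applying Corollary \ref{s.e.s length} to this sequence gives $l_{\E}(X_{i+1}) = l_{\E}(X_i) + l_{\E}(Q_i)$, and telescoping over $i = 0, \dots, m-1$ yields
\[
l_{\E}(X) = \sum_{i=0}^{m-1} l_{\E}(Q_i).
\]
Since each $Q_i$ is a non-zero $\E$-finite object we have $l_{\E}(Q_i) \ge 1$, whence $l_{\E}(X) \ge m = l_{\E'}(X)$, as required.

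The one subtlety worth flagging is that an $\E'$-simple object need \emph{not} be $\E$-simple (the class of $\E$-subobjects is larger), so the chosen $\E'$-composition series is in general not itself an $\E$-composition series; the inequality $l_{\E}(Q_i) \ge 1$, rather than equality, is precisely what absorbs this gap. If one prefers an argument that produces an honest $\E$-composition series, one can instead refine each step $X_i \rightarrowtail X_{i+1}$ by lifting an $\E$-composition series of $Q_i$ through the Fourth $\E$-isomorphism theorem (Proposition \ref{4 iso}) and concatenating; this gives an $\E$-composition series of $X$ of length $\sum_i l_{\E}(Q_i) \ge m$, and the $\E$-Jordan--H\"older property identifies that length with $l_{\E}(X)$. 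Either way there is no genuine obstacle: the only step requiring any care is the bookkeeping remark that reduction of the exact structure preserves admissible exact sequences and hence finiteness, and everything else follows from results already established.
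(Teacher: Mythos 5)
Your argument is correct, and it is in fact somewhat more careful than the paper's own proof. Both proofs start identically: take an $\E'$-composition series of $X$ and observe that, since $\E'\subseteq\E$, every admissible monic in it is $\E$-admissible. The paper then simply asserts that this $\E'$-composition series ``is also an $\E$-composition series'' and concludes $l_{\E}(X)\ge n$ by definition; taken literally this assertion has exactly the flaw you flag, namely that an $\E'$-simple cokernel need not be $\E$-simple, so the chain is in general only a chain of proper $\E$-subobjects rather than an $\E$-composition series. Your route repairs this by a different key step: you invoke the additivity of $l_{\E}$ on admissible sequences (Corollary \ref{s.e.s length}) and telescope to get $l_{\E}(X)=\sum_i l_{\E}(Q_i)\ge m$, using only that each $Q_i$ is non-zero and hence of $\E$-length at least $1$; your alternative of refining each step through Proposition \ref{4 iso} into an honest $\E$-composition series is an equally valid repair. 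What your approach buys is precision (and the slightly stronger statement that \emph{any} $\E'$-composition series has length at most $l_{\E}(X)$, without needing the Jordan--H\"older property for $\E'$); what the paper's one-line version buys is brevity, at the cost of the gloss you identified. Your preliminary remarks on $\E'$-finiteness via Lemma \ref{c.s} are also sound, though under the section's standing hypotheses they are mostly bookkeeping.
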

\begin{proof}Let us consider an $\E'-$composition series of ending by $X$
\[ 0=X_0 \; \imono{i_1} X_1 \;\imono{i_2} \cdots \; \imono{i_{n-1}} \;  X_{n-1}\;\imono{i_n}X=X_n \]
where $l_{\E'}(X)=n$. Since $\E'\subseteq \E$, all these pairs $(i_j, d_j)$  will also be in $\E$. So the
$\E'-$composition series is also an $\E-$composition series and therefore by definition $l_{\E}(X)\ge n$.
\end{proof}

%%%%%%%%%%%%%%%%%%%%%%%%%%%%%%%%%%%%%%

%%%%%%%%%%%%%%%%%%%%%%%%%%%%%%%%%%%%%%

\bigskip

\noindent{Thomas Br\"ustle, Souheila Hassoun \\D\'{e}partment de math\'{e}matiques \\  Universit\'{e} de Sherbrooke \\  Sherbrooke, Qu\'{e}bec, J1K 2R1 \\  Canada }\\
Thomas.Brustle@usherbrooke.ca \\ Souheila.Hassoun@usherbrooke.ca 

\bigskip 

\noindent{Aran Tattar \\ School of Mathematics \\ University of Leicester \\  Leicester, LE1 7RH \\  UK }\\
ast20@le.ac.uk

\end{document}